\numberwithin{equation}{section}
\newtheorem{theorem}{Theorem}[section]
\newtheorem{proposition}[theorem]{Proposition}
\newtheorem{lemma}[theorem]{Lemma}
\newtheorem{remark}[theorem]{Remark}
\newtheorem{definition}[theorem]{Definition}
\theoremstyle{definition}
\newtheorem*{theoBM}{Theorem}
\renewcommand{\epsilon}{\eps}
\renewcommand{\i}{{\rm i}}
\newcommand{\N}{{\mathbb N}}
\newcommand{\R}{{\mathbb R}}
\newcommand{\dvg}{{\rm div}}
\newcommand{\eps}{\varepsilon}
\newcommand{\iu}{{\rm i}}
\newcommand{\pnorm}[2][]{\if #1'' \left|#2\right|_p \else \left|#2\right|_{#1} \fi}
\newcommand{\scal}[2]{{#1} \cdot {#2}\,}
\newcommand{\loc}{{\rm loc}}
\newcommand{\C}{\mathbb{C}}
\renewcommand{\theta}{\vartheta}
\title[Magnetic BV functions and the Bourgain-Brezis-Mironescu formula]{Magnetic
	BV functions and the \\ Bourgain-Brezis-Mironescu formula}
\author[A.\ Pinamonti]{Andrea Pinamonti}
\author[M.\ Squassina]{Marco Squassina}
\author[E.\ Vecchi]{Eugenio Vecchi}
\address[A.\ Pinamonti]{Dipartimento di Matematica \newline\indent
	Universit\`a di Trento,
	Via Sommarive 14, 38050 Povo (Trento), Italy}
\email{andrea.pinamonti@unitn.it}
\address[M.\ Squassina]{Dipartimento di Matematica e Fisica \newline\indent
	Universit\`a Cattolica del Sacro Cuore,
	Via dei Musei 41, I-25121 Brescia, Italy}
\email{marco.squassina@dmf.unicatt.it}
\address[E.\ Vecchi]{Dipartimento di Matematica \newline\indent
	Universit\`a di Bologna,
	Piazza di Porta S. Donato 5, 40126, Bologna, Italy}
\email{eugenio.vecchi2@unibo.it}
\thanks{The authors are members of {\em Gruppo Nazionale per l'Analisi Ma\-te\-ma\-ti\-ca, la Probabilit\`a e le loro Applicazioni} (GNAMPA) of the {\em Istituto Nazionale di Alta Matematica} (INdAM). E.V. receives funding from the People Programme (Marie Curie Actions) of the European Union's Seventh
Framework Programme FP7/2007-2013/ under REA grant agreement No.\ 607643 (ERC Grant MaNET `Metric Analysis for Emergent Technologies')}
\subjclass[2010]{49A50, 26A33, 82D99}
\keywords{Fractional magnetic spaces, Bourgain-Brezis-Mironescu formula, BV functions.}
\begin{document}
\hyphenation{Spia-na-to}
\begin{abstract}
	We prove a general magnetic Bourgain-Brezis-Mironescu formula which 
	extends the one 
	obtained in \cite{BM} in the Hilbert case setting.\ In particular, after developing a rather complete theory of magnetic bounded variation functions, we prove the 
	validity of the formula in this class.
\end{abstract}

\maketitle

\begin{center}
	\begin{minipage}{7.5cm}
		\small
		\tableofcontents
	\end{minipage}
\end{center}

\medskip


\section{Introduction}
The celebrated Bourgain-Brezis-Mironescu formula, $(BBM)$ in short, appeared 
for the first time in \cite{bourg,bourg2}, and provided a new characterization for
functions in the Sobolev space $W^{1,p}(\Omega)$,  with $p\geq 1$ and for
$\Omega\subset\R^N$ being a smooth bounded domain. To this aim, the authors of \cite{bourg,bourg2} perform
a careful study of the limit properties of the Gagliardo semi-norm
defined for the fractional Sobolev spaces $W^{s,p}(\Omega)$ with $0<s<1$.
In particular, they considered the limit as $s\nearrow 1$.
To be more precise, for any $W^{1,p}(\Omega)$ it holds  
\begin{equation}
\tag{$BBM$}
\lim_{s\nearrow 1}(1-s)\int_{\Omega}\int_{\Omega}\frac{|u(x)-u(y)|^p}{|x-y|^{N+ps}}dxdy=
Q_{p,N}\int_{\Omega}|\nabla u|^pdx,
\end{equation}
where $Q_{p,N}$ is defined by
\begin{equation}
\label{valoreK}
Q_{p,N}=\frac{1}{p}\int_{{\mathbb S}^{N-1}}|{\boldsymbol \omega}\cdot h|^{p}d\mathcal{H}^{N-1}(h),
\end{equation}
where ${\mathbb S}^{N-1} \subset \R^N$ denotes the unit sphere 
and ${\boldsymbol \omega}$ is an arbitrary unit vector of $\R^N$. 
This also allows to get the stability of (variational) eigenvalues for the fractional $p$-Laplacian operator as $s\nearrow 1$, see
\cite{brapasq}.
We recall that characterizations similar to $(BBM)$
when $s\searrow 0$ were obtained in \cite{mazia, mazia2}.\\
In the following years, a huge effort 
in trying to extend the results 
proved in \cite{bourg} has been made.
One of the first extension was achieved by Nguyen in \cite{nguyen06}, where 
he provided a new characterization for
functions in $W^{1,p}(\R^N)$. 
As we already mentioned, the $(BBM)$-formula proved in \cite{bourg}
covered the case of $\Omega\subset \R^N$ being a smooth and bounded
domain, therefore it was quite natural to try to relax the 
assumptions on the open set $\Omega \subset \R^N$: this kind of problem
was recently addressed in \cite{Spe} and \cite{Spe2}, where Leoni and Spector
were able to provide a generalization of the $(BBM)$-formula to \textit{any}
open set $\Omega \subset \R^N$ .
The interest resulted from \cite{bourg} led also to related
new characterizations of Sobolev spaces in non-Euclidean contexts
like the Heisenberg group (see \cite{Barb,Cui}).\\
\indent One of the most challenging problems left open in \cite{bourg}
was to provide similar characterizations for functions of bounded variation.
A positive answer to this question has been given by Davila in \cite{davila} 
and by Ponce in \cite{Ponce}. They  
completed the picture by showing that,
$$
\lim_{s\nearrow 1}(1-s)\int_{\Omega}\int_{\Omega}\frac{|u(x)-u(y)|}{|x-y|^{N+s}}dxdy=
Q_{1,N}|Du|(\Omega),
$$
for every bounded Lipschitz set $\Omega \subset \R^N$ and every $u\in BV(\Omega)$. We also recall
that the extension to any open set proved in \cite{Spe, Spe2}
concerns $BV$ functions as well, see also \cite{PonceSpector}.\\
In order to try to give a more complete overview of the subject,
we have to mention that, parallel to the fractional theory of
Sobolev spaces, there exists a quite developed theory of fractional $s$-perimeters (e.g. \cite{CRS}),
and also in this framework there have been several contributions concerning their
analysis in the limits 
$s\nearrow 1$ and $s\searrow 0$ (see e.g. \cite{CV,AmbDepMart, Ludwig1, Ludwig2, Dip, FerPin}).\\
Very recently the results we have mentioned have been discovered to
have interesting applications in image processing, 
see for instance \cite{BHN,BHN2,BHN3,bre-linc}.
One of the latest generalizations of $(BBM)$ appeared
very recently in \cite{BM} in the context of magnetic Sobolev spaces $W_{A}^{1,2}(\Omega)$.
In fact, an important role in the study of particles which interact 
with a magnetic field $B=\nabla\times A$, $A:\R^3\to\R^3$,  is assumed by another {\em extension} of the Laplacian, namely the {\em magnetic Laplacian} $(\nabla-\iu A)^2$ (see \cite{AHS,reed,LL}), yielding to nonlinear Schr\"odinger equations like
\begin{equation}
\label{mageq}
- (\nabla-\iu A)^2 u + u = f(u),
\end{equation}
which have been extensively studied (see e.g.\ \cite{arioliSz} and references therein), where $(\nabla-\iu A)^2$
is defined in weak sense as the differential of the integral functional
\begin{equation}
\label{Anorma}
W_{A}^{1,2}(\Omega) \ni u\mapsto \int_{\Omega}|\nabla u-\i A(x)u|^2dx.
\end{equation}
If $A:\R^N\to\R^N$ is a smooth function and $s \in (0,1)$,
a non-local magnetic counterpart of \eqref{mageq}, i.e.
\begin{equation*}
(-\Delta)^s_Au(x)=c(N,s) \lim_{\eps\searrow 0}\int_{B^c_\eps(x)}\frac{u(x)-e^{\i (x-y)\cdot A\left(\frac{x+y}{2}\right)}u(y)}{|x-y|^{N+2s}}dy,
\qquad \lim_{s\nearrow 1}\frac{c(N,s)}{1-s}=\frac{4N\Gamma(N/2)}{2\pi^{N/2}},
\end{equation*}
was introduced  in \cite{piemar,I10} for complex-valued functions. We point out that
$(-\Delta)^s_A$  coincides with the usual 
fractional Laplacian for $A=0$.
The motivations for the introduction of this operator are carefully 
described in \cite{piemar,I10} and fall into
the framework of the general theory of L\'evy processes. 
It is thus natural wondering about the consistency of 
the norms associated with the above fractional magnetic operator
in the singular limit $s\nearrow 1$,
with the energy functional \eqref{Anorma}. We point out that the case $s\nearrow 0$ has been studied in \cite{Lim0}.

The aim of this paper is to continue the study of the validity of a magnetic
counterpart of $(BBM)$, extending the results of \cite{BM} to arbitrary magnetic
fractional Sobolev spaces and to magnetic $BV$ functions. We refer the reader to  
Sections~\ref{magnSec} and \ref{BVsec} for the definitions.
On the other hand, while for $p\geq 1$ the spaces $W^{1,p}_A(\Omega)$ have
a wide background, to the best of our knowledge no notion
of {\em magnetic bounded variations space} 
containing $W^{1,1}_A(\Omega)$ seems to be 
previously available in the literature.
\vskip3pt
\noindent
As already recalled, this indeed holds for the Hilbert case $p=2$, 
as stated in the following 

\begin{theoBM}[M.\ Squassina, B.\ Volzone \cite{BM}]
	Let $\Omega\subset\R^N$ be an open and bounded set with Lipschitz boundary and let $A\in C^2(\bar{\Omega}, \R^N)$.\ Then,
	for every  $u\in W^{1,2}_{A}(\Omega)$, we have
	$$
	\lim_{s\nearrow 1}(1-s)\int_{\Omega}\int_{\Omega}\frac{|u(x)-e^{\i (x-y)\cdot A\left(\frac{x+y}{2}\right)}u(y)|^2}{|x-y|^{N+2s}}dxdy=
	Q_{2,N}\int_{\Omega}|\nabla u-\i A(x)u|^2dx,
	$$
	where $Q_{2,N}$ is the positive constant defined in \eqref{valoreK} with $p=2$.
\end{theoBM}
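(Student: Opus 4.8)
The plan is to reduce the statement to the known (non-magnetic) Bourgain--Brezis--Mironescu formula of Bourgain--Brezis--Mironescu and D\'avila by means of a pointwise comparison between the magnetic difference quotient and the Euclidean one. First I would fix $u\in W^{1,2}_A(\Omega)$ and write, for $x,y\in\Omega$,
\[
u(x)-e^{\i (x-y)\cdot A(\frac{x+y}{2})}u(y)
=\bigl(u(x)-u(y)\bigr)+\bigl(1-e^{\i (x-y)\cdot A(\frac{x+y}{2})}\bigr)u(y).
\]
Since $A\in C^2(\bar\Omega,\R^N)$ is bounded on $\bar\Omega$, one has $|1-e^{\i (x-y)\cdot A(\frac{x+y}{2})}|\le \|A\|_\infty |x-y|$, so the "phase" term is of order $|x-y|$; this is exactly the borderline decay that is killed by the weight $|x-y|^{N+2s}$ after multiplying by $(1-s)$ and letting $s\nearrow1$ — but only if we are careful, since naively it contributes a term that behaves like $(1-s)\int\int |u(y)|^2|x-y|^{2-N-2s}\,dx\,dy$, which is finite but does \emph{not} vanish. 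The correct bookkeeping is therefore to expand to \emph{second order}: write $e^{\i\zeta}=1+\i\zeta+O(\zeta^2)$ with $\zeta=(x-y)\cdot A(\frac{x+y}{2})$, so that
\[
u(x)-e^{\i\zeta}u(y)
=\bigl[(u(x)-u(y))-\i\,(x-y)\cdot A(\tfrac{x+y}{2})\,u(y)\bigr]+R(x,y),\qquad |R(x,y)|\le C|x-y|^2\bigl(|u(x)|+|u(y)|\bigr).
\]
The bracketed term is a first-order Taylor-type expression whose "gradient" is precisely $\nabla u-\i Au$ evaluated near $x$, while the remainder $R$ satisfies $(1-s)\int_\Omega\int_\Omega |R(x,y)|^2|x-y|^{-N-2s}\,dx\,dy\to 0$ as $s\nearrow1$ by dominated convergence (the integrand is controlled uniformly in $s$ near $s=1$ on the diagonal, and $|x-y|^{2}$ beats the singularity).

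The key steps, in order, are: \emph{(i)} smooth approximation — by the results developed in Section~\ref{magnSec} one may first prove the formula for $u\in C^\infty(\bar\Omega)$ and then pass to general $u\in W^{1,2}_A(\Omega)$ using density together with a uniform (in $s$ close to $1$) bound on the magnetic Gagliardo functional by $\int_\Omega|\nabla u-\i Au|^2$ plus lower-order terms, a bound that itself follows from the splitting above; \emph{(ii)} for smooth $u$, perform the second-order expansion of the exponential as above and discard the remainder; \emph{(iii)} in the main term, Taylor-expand $u(x)-u(y)=\nabla u(y)\cdot(x-y)+o(|x-y|)$ and combine with $-\i(x-y)\cdot A(\frac{x+y}{2})u(y)=-\i\, u(y) A(y)\cdot(x-y)+o(|x-y|)$, so that the bracket equals $(\nabla u(y)-\i A(y)u(y))\cdot(x-y)+o(|x-y|)$; \emph{(iv)} insert this into the double integral, square (taking real parts of cross terms, which is where the complex modulus matters), and recognize
\[
\lim_{s\nearrow1}(1-s)\int_\Omega\int_\Omega\frac{\bigl|(\nabla u(y)-\i A(y)u(y))\cdot(x-y)\bigr|^2}{|x-y|^{N+2s}}\,dx\,dy
=Q_{2,N}\int_\Omega|\nabla u-\i Au|^2\,dy
\]
by the same spherical-average computation that produces \eqref{valoreK} (passing to polar coordinates around $y$, the angular integral gives $\int_{\mathbb S^{N-1}}|\boldsymbol\omega\cdot h|^2\,d\mathcal H^{N-1}(h)$ and the radial integral near $r=0$ gives $\frac{1}{2(1-s)}$ up to boundary corrections controlled by the Lipschitz regularity of $\Omega$); \emph{(v)} check that the $o(|x-y|)$ errors and the mixed terms between the main term and $R$ vanish after multiplying by $(1-s)$, again by dominated convergence.

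The main obstacle I expect is step~\emph{(i)}, the reduction from smooth functions to general $u\in W^{1,2}_A(\Omega)$: one needs a \emph{uniform} upper bound, for $s$ in a left-neighbourhood of $1$, of the magnetic nonlocal functional by the local magnetic energy, so that the convergence is stable under the $W^{1,2}_A$-approximation. This is the magnetic analogue of the classical estimate $(1-s)[u]_{W^{s,2}}^2\le C\|\nabla u\|_{L^2}^2$, and it should follow by combining the standard (non-magnetic) estimate with the $O(|x-y|)$ control on the phase factor, but handling the cross term requires either a Cauchy--Schwarz argument that produces a lower-order $\|u\|_{L^2}^2$ contribution or an integration by parts exploiting $A\in C^2$; keeping all constants independent of $s$ near $1$ is the delicate point. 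A secondary technical issue, already present in the non-magnetic case, is the contribution of pairs $(x,y)$ with $y$ near $\partial\Omega$, where the ball $B_r(y)$ is not entirely contained in $\Omega$; this is handled exactly as in D\'avila's argument using the Lipschitz character of $\partial\Omega$, and the magnetic phase factor does not affect it since it is bounded.
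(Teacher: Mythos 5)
Your proposal is correct and follows essentially the same route as the paper's (and \cite{BM}'s) proof: for smooth $u$ a Taylor expansion identifies the leading term $(\nabla u-\i Au)\cdot\frac{x-y}{|x-y|}$ (the paper organizes this by expanding the single function $\varphi(y)=e^{\i(x-y)\cdot A(\frac{x+y}{2})}u(y)$, which is equivalent to your second-order expansion of the phase plus first-order expansion of $u$), the spherical average then produces $Q_{2,N}$, and the general case follows by density together with a uniform-in-$s$ bound of the magnetic Gagliardo functional by $\|u\|_{W^{1,2}_A}$. The only minor deviation is that you propose to obtain that uniform bound by splitting off the phase and invoking the classical non-magnetic estimate (legitimate here since $A$ is bounded on $\bar\Omega$, and the Cauchy--Schwarz route you mention does close the argument with a harmless $\|u\|_{L^2}^2$ term), whereas the paper proves it directly in the magnetic setting (Lemmas~\ref{stima1} and \ref{firstLemma}) via the interpolation path $t\mapsto e^{\i(1-t)h\cdot A(y+\frac h2)}u(y+th)$ and an extension of $u$ to $\R^N$.
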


\indent The goal of this paper is twofold: 
first we aim to extend this formula to the case of general magnetic spaces 
$W_{A}^{1,p}$ for $p \geq 1$, and secondly we introduce a suitable
notion of {\em magnetic bounded variation} $|Du|_A(\Omega)$ and we prove that a $(BBM)-$ formula holds also in that case. 

\vskip4pt
\noindent
In order to state the main result we need to introduce some notation:
let $p\geq 1$ be fixed and let us consider the normed space 
$(\C^N, |\cdot|_{p})$,  with
\begin{equation}\label{p-norm}
 |z|_p:=\left(|(\Re z_1,\ldots, \Re z_N)|^p+|(\Im z_1,\ldots, \Im z_N)|^p\right)^{1/p},
 \end{equation}
 where $|\cdot|$ is the Euclidean norm of $\R^N$ and
 $\Re a$,$\Im a$ denote the real and imaginary parts of $a\in\mathbb{C}$
 respectively. Notice that $|z|_p=|z|$ whenever $z 	\in \R^N$, which makes our next 
statements consistent with the case $A=0$ and $u$ being 
a real valued function \cite{bourg,bre,davila,Ponce}. 
 


\begin{theorem}[General magnetic Bourgain-Brezis-Mironescu limit]
	\label{main}
Let $A:\R^N\to \R^N$ be of class $C^2$.
Then, for any bounded extension domain $\Omega\subset\R^N$
$$
\lim_{s\nearrow 1}(1-s)\int_{\Omega}\int_{\Omega}\frac{|u(x)-e^{\i (x-y)\cdot A\left(\frac{x+y}{2}\right)}u(y)|_1}{|x-y|^{N+s}}dxdy=
Q_{1,N}|Du|_A(\Omega),
$$
for all $u\in BV_A(\Omega)$, where $Q_{p,N}$ is defined in \eqref{valoreK}. Furthermore, for any $p\geq 1$ and any Lipschitz bounded domain $\Omega\subset\R^N$
$$
\lim_{s\nearrow 1}(1-s)\int_{\Omega}\int_{\Omega}\frac{|u(x)-e^{\i (x-y)\cdot A\left(\frac{x+y}{2}\right)}u(y)|^p_p}{|x-y|^{N+ps}}dxdy=
Q_{p,N}\int_{\Omega}|\nabla u-\i A(x)u|^p_p\, dx,
$$
for all $u\in W^{1,p}_A(\Omega)$.
\end{theorem}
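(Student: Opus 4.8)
The plan is to prove the $BV_A$ statement first and then recover the $W^{1,p}_A$ statement as a consequence (or by a parallel but easier argument). For the $BV_A$ case, the key idea is the standard localization/reduction technique from the classical Bourgain–Brezis–Mironescu and D\'avila proofs, adapted to the magnetic setting via the gauge factor $e^{\i(x-y)\cdot A(\frac{x+y}{2})}$. First I would perform a Taylor expansion of the phase: writing $y=x+\eps h$ with $|h|$ bounded, one has $e^{\i(x-y)\cdot A(\frac{x+y}{2})}=1-\i\eps h\cdot A(x)+O(\eps^2)$ uniformly on compact sets, so that the magnetic difference quotient $u(x)-e^{\i(x-y)\cdot A(\frac{x+y}{2})}u(y)$ behaves, to leading order, like the \emph{covariant} difference $u(x)-u(y)+\i(y-x)\cdot A(x)u(y)$. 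This is precisely the object whose total variation is measured by $|Du|_A(\Omega)$; the $C^2$ regularity of $A$ is what makes the remainder terms negligible after multiplication by $(1-s)$ and integration. The error bookkeeping here, and the fact that one can interchange the $\eps\searrow 0$ limit with the spatial integral, will rely on the $BV_A$ theory developed in Section~\ref{BVsec} (in particular a magnetic analogue of the coarea/approximation and lower-semicontinuity results).

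Second, I would establish the two inequalities separately. For the $\liminf$ (lower bound), the natural route is mollification: approximate $u\in BV_A(\Omega)$ by smooth functions $u_\delta$ with $|Du_\delta|_A$ converging to $|Du|_A(\Omega)$ (a magnetic smooth-approximation result that should be part of the developed theory), apply the already-known Sobolev-case formula (the second displayed identity, for $p=1$, proved on smooth functions via the $A$-Taylor expansion plus the classical D\'avila computation), and pass to the limit using lower semicontinuity of the nonlocal magnetic functional in $\delta$. For the $\limsup$ (upper bound), one uses the boundedness of $\Omega$ as an extension domain to extend $u$ to a slightly larger set, truncates, mollifies, and controls the nonlocal energy by $|Du|_A$ on a neighbourhood, then lets the neighbourhood shrink; the uniform-in-$s$ estimate $(1-s)\iint \dots \lesssim |Du|_A(\Omega')$ for smooth $u$ is the workhorse and again comes from the pointwise phase expansion. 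Combining the two gives the identity with constant $Q_{1,N}$, since after the expansion the angular integral that produces $Q_{1,N}$ is exactly the one appearing in the real-valued case \cite{davila,Ponce}.

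For the $W^{1,p}_A(\Omega)$ statement with $p\ge 1$ on a Lipschitz bounded domain, the argument is structurally the same but cleaner: one reduces to $C^\infty$ functions by density in $W^{1,p}_A$, performs the phase Taylor expansion to replace $u(x)-e^{\i(x-y)\cdot A(\frac{x+y}{2})}u(y)$ by $u(x)-u(y)+\i\eps h\cdot A(x)u(x)+o(\eps)$, and then recognizes that the leading nonlocal functional, once written in the $|\cdot|_p$ norm on $\C^N$, is precisely the one treated by Bourgain–Brezis–Mironescu/Nguyen with the vector field $\nabla u-\i A u$ in place of $\nabla u$. The constant $Q_{p,N}$ emerges from the same spherical integral \eqref{valoreK}; the specific choice of norm $|\cdot|_p$ in \eqref{p-norm} is what allows the real and imaginary parts to decouple correctly so that the scalar computation applies componentwise. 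Passing from smooth $u$ to general $u\in W^{1,p}_A$ uses the uniform bound $(1-s)\iint\dots \le C\|\nabla u-\i Au\|_p^p$ together with a standard equi-integrability / diagonal argument.

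The main obstacle I anticipate is \emph{not} the angular computation (which is classical) but rather the interchange of limits and the uniform control of the remainder in the magnetic phase expansion near the boundary of $\Omega$: one must ensure that the $O(\eps^2)$ terms from $e^{\i(x-y)\cdot A(\frac{x+y}{2})}$, when divided by $|x-y|^{N+ps}$ and integrated over $\Omega\times\Omega$, contribute something that vanishes after multiplication by $(1-s)$ \emph{uniformly} as $s\nearrow 1$, and that no mass is lost or gained at $\partial\Omega$. In the $BV_A$ case this is compounded by the need for a robust magnetic $BV$ approximation theorem (smooth approximation with strict convergence of $|Du|_A$), which is why the paper develops that theory first; assuming those Section~\ref{BVsec} results, the limit identity follows by the $\liminf$/$\limsup$ sandwich described above.
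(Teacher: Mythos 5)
Your overall skeleton (Taylor expansion of the phase to identify the covariant gradient $\nabla u-\i Au$ and the angular constant, density for the $W^{1,p}_A$ case, extension plus a uniform D\'avila--Ponce type estimate for the upper bound in the $BV_A$ case, and a final $\liminf$/$\limsup$ sandwich) is the same as the paper's. The genuine gap is in the mechanism you propose for the lower bound in the $BV_A$ case. Writing
\[
G_s(v):=(1-s)\int_{\Omega}\int_{\Omega}\frac{|v(x)-e^{\i (x-y)\cdot A\left(\frac{x+y}{2}\right)}v(y)|_1}{|x-y|^{N+s}}\,dx\,dy,
\]
lower semicontinuity of $G_s$ under $L^1$ convergence (Fatou) gives, for each fixed $s$, only $G_s(u)\le\liminf_{\delta}G_s(u_\delta)$, i.e.\ an \emph{upper} bound for $G_s(u)$ in terms of the smooth approximants. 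Combined with $\lim_{s\nearrow 1}G_s(u_\delta)=Q_{1,N}|Du_\delta|_A(\Omega)$ and $|Du_\delta|_A(\Omega)\to|Du|_A(\Omega)$, this yields no control of $G_s(u)$ from below, so your argument cannot produce $\liminf_{s\nearrow1}G_s(u)\ge Q_{1,N}|Du|_A(\Omega)$. What is actually needed is the reverse comparison: mollification (almost) \emph{decreases} the nonlocal magnetic energy, i.e.\ the energy of $u_\varepsilon$ on an inner domain is bounded by the energy of $u$ on $\Omega$ plus a commutator error coming from the fact that convolution does not commute with the phase $e^{\i(x-y)\cdot A(\frac{x+y}{2})}$; this is exactly Lemma~\ref{approx} (resting on the $C^2$ estimate of Lemma~\ref{proppsi}), and it is there — not in the angular computation — that the magnetic structure really costs something. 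With that comparison one applies the smooth-case formula (Proposition~\ref{form-smooth}) to $u_\varepsilon$ and concludes by lower semicontinuity of the \emph{variation} $|D\cdot|_A$ (Lemma~\ref{semic}) as $\varepsilon\to0$, then removes the inner domain by inner regularity of the Radon measure $|Du|_A$; the Anzellotti--Giaquinta approximation you invoke is not the tool used at this point.

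Two smaller remarks. Your upper-bound sketch is consistent with the paper's Lemma~\ref{Ponciarello} (extension, mollification, and the uniform estimate in terms of $|D\overline u|_A$ of a neighbourhood, which is then shrunk using outer regularity). For the $W^{1,p}_A$ case your route matches Theorem~\ref{Main2}, except that the uniform bound cannot be $(1-s)\iint\lesssim\|\nabla u-\i Au\|_p^p$ alone: the phase produces terms proportional to $\|u\|_{L^p}$, so the correct estimate (Lemma~\ref{firstLemma}) involves the full norm $\|u\|_{W^{1,p}_A(\Omega)}^p$, and the passage from smooth to general $u$ uses the resulting Lipschitz continuity of $u\mapsto\|F^u_m\|_{L^p(\Omega\times\Omega)}$ with respect to $\|\cdot\|_{W^{1,p}_A(\Omega)}$ rather than an equi-integrability argument.
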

We refer to Definition \ref{exdomain} for a precise explanation of \textit{extension domain}. 
We stress that the definitions of both the magnetic Sobolev spaces
$W_{A}^{1,p}(\Omega)$ and of the magnetic $BV$ spaces $BV_{A}(\Omega)$ 
made in Sections~\ref{magnSec} and ~\ref{BVsec} 
are consistent, in the case of zero magnetic potential $A$, with the classical spaces $W^{1,p}(\Omega)$ 
and $BV(\Omega)$, respectively. Moreover,  it holds $|Du|_A(\Omega)=|Du|(\Omega)$, 
so that Theorem~\ref{main} is consistent with the classical formulas of \cite{bourg,davila,Ponce}.\\
In particular, in the spirit of \cite{bre}, as a byproduct of Theorem~\ref{main}, 
if $\Omega\subset\R^N$ is a smooth bounded domain, 
$A:\R^N\to \R^N$ is of class $C^2$ and we have
$$
\lim_{s\nearrow 1}(1-s)\int_{\Omega}\int_{\Omega}\frac{|u(x)-e^{\i (x-y)\cdot A\left(\frac{x+y}{2}\right)}u(y)|^p_p}{|x-y|^{N+ps}}dxdy=0,\quad\,\, 
u\in W^{1,p}_A(\Omega),
$$
then we get
\begin{equation*}
\begin{cases}
\nabla \Re u=-\Im u A, &\\
\nabla \Im u=\Re u A,
\end{cases}
\end{equation*}
namely the direction of $\nabla \Re u,\nabla \Im u$ is that of the 
magnetic potential $A$.\ In the particular case $A=0$, consistently with the results of \cite{bre},  this implies that $u$ is a constant function. \\
We finally notice that for a Borel set $E\subset\Omega$,  denoting $E^c=\Omega\setminus E$, the quantity
\begin{align*}
P_s(E;A):&=
\frac{1}{2}\int_{E}\int_{E}\frac{|1-e^{\i (x-y)\cdot A\left(\frac{x+y}{2}\right)}|_1}{|x-y|^{N+s}}dxdy+
\frac{1}{2}\int_{E}\int_{E^c}\frac{1}{|x-y|^{N+s}}dxdy \\
&+\frac{1}{2}\int_{E^c}\int_{E}\frac{|e^{\i (x-y)\cdot A\left(\frac{x+y}{2}\right)}|_1}{|x-y|^{N+s}}dxdy,
\end{align*}
plays the r\v ole of a nonlocal $s$-perimeter of $E$ depending on $A$, which
reduces for $A=0$ to the classical notion of fractional $s$-perimeter of $E$ in $\Omega$
$$
P_s(E)=\int_{E}\int_{E^c}\frac{1}{|x-y|^{N+s}}dxdy.
$$
Then, the main result Theorem~\ref{main} reads as 
$$
\lim_{s\nearrow 1}(1-s)P_s(E,A)=Q_{1,N}|D {\bf 1}_E|_A(\Omega).
$$
\vskip3pt

\indent The structure of the paper is as follows. In Section~\ref{magnSec}
we introduce magnetic Sobolev spaces $W^{1,p}_A(\Omega)$. In Section~\ref{BVsec}
we define the magnetic $BV$ space $BV_{A}(\Omega)$ and we prove that several classical results for BV functions hold also for functions belonging to $BV_{A}(\Omega)$. In particular, we prove a structure result (Lemma~\ref{Struttura}),
a result about the extension to $\R^N$ for Lipschitz domains (Lemma~\ref{ExtDom-new}), 
the semi-continuity of the variation (Lemma~\ref{semic}),
a magnetic counterpart of the classical Anzellotti-Giaquinta approximation Theorem
(Lemma~\ref{Approx}) and, finally, a compactness result (Lemma~\ref{compactness}).
In Sections \ref{main_res}, \ref{main_res2} and \ref{sec6} we 
finally prove Theorem~\ref{main}.

\section{Magnetic Sobolev spaces}
\label{magnSec}
\noindent
In order to avoid confusion with the different uses of
the symbol $v \cdot w$, we define
\begin{equation*}
v \cdot w := 
\sum_{i=1}^{N} (\Re v_i + \i \Im v_i)(\Re w_i + \i \Im w_i), \qquad \textrm{if $v,w\in\C^N$.}
\end{equation*}						
Let $\Omega$ be an open set of $\R^N$. For any $p\geq 1$ we denote by $L^p(\Omega,\C)$ 
the Lebesgue space of complex valued functions $u:\Omega\to\C$ such that
$$
\|u\|_{L^p(\Omega)}=\left(\int_{\Omega}|u(x)|_p^pdx\right)^{1/p}<\infty,
$$
where $|\cdot|_p$ is as in \eqref{p-norm}.
For a locally bounded function $A:\R^N\to\R^N$, we consider the semi-norm 
$$
[u]_{W^{1,p}_A(\Omega)}:=\Big(\int_{\Omega}|\nabla u-\i A(x)u|^p_pdx\Big)^{1/p},
$$
and define $W^{1,p}_A(\Omega)$ as the space of functions $u\in L^p(\Omega,\C)$ such that  $[u]_{W^{1,p}_A(\Omega)}<\infty$ with norm
$$
\|u\|_{W^{1,p}_A(\Omega)}:=\Big(\|u\|_{L^p(\Omega)}^p+[u]_{W^{1,p}_A
	(\Omega)}^p\Big)^{1/p}.
$$
The space $W^{1,p}_{0,A}(\Omega)$ will denote the closure of the space $C^\infty_c(\Omega)$ in $W^{1,p}_A(\Omega)$.
For any $s\in (0,1)$ and $p\geq 1$, the magnetic Gagliardo semi-norm is defined as 
$$
[u]_{W^{s,p}_A(\Omega)}:=\Big(\int_{\Omega}\int_{\Omega}\frac{|u(x)-e^{\i (x-y)\cdot A\left(\frac{x+y}{2}\right)}u(y)|^p_p}{|x-y|^{N+ps}}dxdy\Big)^{1/p}.
$$
We denote by $W^{s,p}_A(\Omega)$ the space of functions $u\in L^p(\Omega,\C)$ such that  $[u]_{W^{s,p}_A(\Omega)}<\infty$ normed with 
$$
\|u\|_{W^{s,p}_A(\Omega)}:=\left(\|u\|_{L^p(\Omega)}^p+[u]_{W^{s,p}_A(\Omega)}^p\right)^{1/p}.
$$
For $A=0$ this is consistent with the usual space $W^{s,p}(\Omega)$ with norm $\|\cdot\|_{W^{s,p}(\Omega)}$.

\section{Magnetic BV spaces}
\label{BVsec}
\noindent
In this section we introduce a suitable notion of magnetic bounded variation functions.
Let $\Omega$ be an open set of $\R^N$.
We recall that a real-valued function $u\in L^1(\Omega)$ 
is of bounded variation, and we shall write $u\in BV(\Omega)$, if
$$
|Du|(\Omega)=\sup\left\{\int_{\Omega}  u(x) \dvg\varphi(x)dx \ |\ \varphi\in C_c^{\infty}(\Omega,\R^N),\ \|\varphi\|_{L^\infty(\Omega)}\leq 1\right\}<\infty.
$$
The space $BV(\Omega)$ is endowed with the norm
$$
\|u\|_{BV(\Omega)} := \|u\|_{L^{1}(\Omega)} + |Du|(\Omega).
$$
The space of complex-valued bounded variation functions 
$BV(\Omega,\mathbb{C})$ 
is defined as the class of Borel functions 
$u:\Omega\to \mathbb{C}$ such that $\Re u, \Im u\in BV(\Omega)$. 
The $\mathbb{C}$-total variation of $u$ is defined by
\[
|Du|(\Omega):=|D\Re u|(\Omega)+|D\Im u|(\Omega).
\]
More generally, it is possible to define a notion of variation for functions $u: \Omega\to E$ where $\Omega\subset\R^N$ is an open set and $(E,d)$ is a locally compact metric space. We refer the interested reader to \cite{Ambrosio}.

We are now ready to define the magnetic $BV$ functions.
\begin{definition}[$A-$bounded variation functions]
Let $\Omega\subset \R^N$ be an open set and 
$A:\R^N\to\R^N$ a locally bounded function. A function $u\in L^1(\Omega,\mathbb{C})$ is said to be of $A$-bounded 
variation and we write $u\in BV_A(\Omega)$, if
\[
|Du|_A(\Omega):=C_{1,A,u}(\Omega)+C_{2,A,u}(\Omega)<\infty,
\]
where we have set 
\begin{align*}
& C_{1,A,u}(\Omega):=\sup\left\{\int_{\Omega}  \Re u(x) \dvg \varphi(x)-
A(x)\cdot\varphi(x)\, \Im u(x) dx \ |\ \varphi\in C_c^{\infty}(\Omega,\R^N),\ \|\varphi\|_{L^\infty(\Omega)}\leq 1\right\}, \\
& C_{2,A,u}(\Omega):=\sup\left\{\int_{\Omega}  \Im u(x) \dvg \varphi(x)+
A(x)\cdot\varphi(x) \, \Re u(x) dx \ |\ \varphi\in C_c^{\infty}(\Omega,\R^N),\ \|\varphi\|_{L^\infty(\Omega)}\leq 1\right\}.
\end{align*}
A function $u\in L^1_{\loc}(\Omega,\mathbb{C})$ is said to be of locally $A$-bounded variation and we write $u\in BV_{A,\loc}(\Omega)$, 
provided that it holds
\[
|Du|_A(U)<\infty, \qquad \textrm{for every open set $U\Subset \Omega$}.
\]
\end{definition}

\noindent
We stress that for $A\equiv 0$, the previous definition 
is consistent with the one of $BV(\Omega)$. 
In order to justify our definition, we will collect in the
following some properties
of the space $BV_A(\Omega)$. These properties are the natural generalization to the magnetic setting of the classical theory \cite{ABF,EG,Z}.

\begin{lemma}[Extension of $|Du|_{A}|$]
	Let $\Omega \subset \R^N$ be an open and bounded set, $A:\R^N\to\R^N$ locally bounded and $u \in BV_{A}(\Omega)$. Let $E\subset \Omega$ be a Borel set then
	\begin{align*}
	|Du|_A(E):=\inf\{C_{1,A,u}(U)\ |\ E\subset U,\ U\subset\Omega\ \textrm{open}\}+\inf\{C_{2,A,u}(U)\ |\ E\subset U,\ U\subset\Omega\ \textrm{open}\}
	\end{align*}
	extends $|Du|_A(\cdot)$ to a Radon measure in $\Omega$. For any open set $U\subset\Omega$, $C_{1,A,u}(U)$ and $C_{2,A,u}(U)$ are defined requiring the test functions to be supported in $U$ and $|Du|_{A}(\emptyset):=0$.
	\begin{proof}
		We note that 
		\[
		\nu_1(E):=\inf\{C_{1,A,u}(U)\ |\ E\subset U,\ U\subset\Omega\ \mbox{open}\}
		\]
		is the variation measure associated with 
		$$
		\varphi\mapsto \int_{\Omega}\Re u(x) \dvg \varphi(x) - A(x)\cdot \varphi(x)\, \Im u(x) \, dx,
		$$ 
		and by \cite[Theorem 1.38]{EG} it is a Radon measure. The same argument applies to 
		\[
		\nu_2(E):=\inf\{C_{2,A,u}(U)\ |\ E\subset U,\ U\subset\Omega\ \mbox{open}\}
		\]
		and the thesis follows.
	\end{proof}
\end{lemma}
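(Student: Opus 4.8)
The plan is to recognise the two set functions $\nu_1$ and $\nu_2$ as the variation measures of two linear functionals on $C_c^{\infty}(\Omega,\R^N)$ and then to invoke the Riesz-type structure theorem \cite[Theorem 1.38]{EG}, exactly as in the identification of $|Du|$ for a classical $BV$ function. Concretely, I would set, for $\varphi\in C_c^{\infty}(\Omega,\R^N)$,
\[
L_1(\varphi):=\into \big(\Re u\,\dvg\varphi-(A\cdot\varphi)\,\Im u\big)\,dx,\qquad
L_2(\varphi):=\into \big(\Im u\,\dvg\varphi+(A\cdot\varphi)\,\Re u\big)\,dx.
\]
Each $L_i$ is well defined and finite on every such $\varphi$: $\dvg\varphi$ is bounded, $u\in L^1(\Omega,\C)$, and since $A$ is locally bounded the functions $(A\cdot\varphi)\Im u$ and $(A\cdot\varphi)\Re u$ are integrable, being supported in the compact set $\supp\varphi$ on which $|A|$ is bounded. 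Testing the definitions of $C_{1,A,u}$ and $C_{2,A,u}$ with $\pm\varphi$ shows moreover that, for every open $U\subset\Omega$,
\[
\sup\big\{L_i(\varphi):\varphi\in C_c^{\infty}(U,\R^N),\ \|\varphi\|_{L^\infty(\Omega)}\le 1\big\}=C_{i,A,u}(U)\le C_{i,A,u}(\Omega)<\infty,
\]
the last inequality because $u\in BV_A(\Omega)$; hence the local boundedness hypothesis of \cite[Theorem 1.38]{EG} holds (even uniformly in $U$, since $\Omega$ is bounded).

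Applying that theorem to $L_i$ — after the routine remark that replacing $C_c^{\infty}$ by $C_c$ leaves all the suprema above unchanged, by mollification — yields a Radon measure $\nu_i$ on $\Omega$, together with a unit vector field $\sigma_i$ such that $L_i(\varphi)=\into\varphi\cdot\sigma_i\,d\nu_i$, whose value on an open set $U$ is exactly the displayed supremum $C_{i,A,u}(U)$, and whose value on a general Borel set $E$ is recovered by outer regularity as $\nu_i(E)=\inf\{\nu_i(U):E\subset U\subset\Omega\ \text{open}\}$. This is precisely the definition of $\nu_i$ given in the statement; the consistency of the two prescriptions on open sets uses only the monotonicity $C_{i,A,u}(U_1)\le C_{i,A,u}(U_2)$ for $U_1\subset U_2$, which is immediate since a smaller open set admits fewer admissible test fields. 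Thus each $\nu_i$ extends $C_{i,A,u}(\cdot)$ and is a finite Radon measure on $\Omega$.

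It then remains to observe that $|Du|_A(E):=\nu_1(E)+\nu_2(E)$, being the sum of two finite Radon measures, is itself a finite Radon measure on $\Omega$, that it agrees with the originally defined $|Du|_A$ on $\Omega$ (and, by the above, on every open subset), and that $|Du|_A(\emptyset)=0$ consistently with the stated convention. There is essentially no obstacle here: the only points deserving a line of care are the well-posedness of $L_i$ on compactly supported fields — which uses local boundedness of $A$ together with $u\in L^1$ — and the passage from smooth to merely continuous test fields needed to fit the cited theorem, a standard mollification argument that does not affect the relevant suprema.
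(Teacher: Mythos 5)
Your argument is correct and is essentially the paper's own proof: you identify $\nu_1$ and $\nu_2$ as the variation measures of the two linear functionals $L_1,L_2$ and invoke \cite[Theorem 1.38]{EG}, then sum the two resulting Radon measures. The extra details you supply (finiteness of $L_i$ on compactly supported fields via local boundedness of $A$, monotonicity of $C_{i,A,u}$, and the mollification step passing from $C_c^{\infty}$ to $C_c$ test fields) are exactly the routine points the paper leaves implicit, so there is no substantive difference in approach.
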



\begin{lemma}[Local inclusion of Sobolev functions]
	Let $\Omega\subset\R^N$ be an open set. Let $A:\R^N\to\R^N$ be locally bounded. Then
	\[
	W^{1,1}_{\loc}(\Omega)\subset BV_{A,\loc}(\Omega).
	\]
	\begin{proof}
		Let $u \in W^{1,1}_{\loc}(\Omega)$, $U \Subset \Omega$ open  
		and consider $\varphi \in C_{c}^{\infty}(U, \R^N)$ 
		with $\|\varphi\|_{L^\infty(U)}\leq 1$. Then 
		\begin{align*}
		\int_{U}&\Re u(x) \, \mathrm{div}\,\varphi(x) - A(x)\cdot\varphi(x) \, \Im u(x)\, dx + 
		\int_{U} \Im u(x) \, \mathrm{div}\,\varphi(x) + A(x)\cdot\varphi(x) \, \Re u(x) \,dx \\
		& =-\int_{U}\left( \nabla \Re u(x) + A(x)\, \Im u(x)\right) \cdot\varphi(x) \,dx 
		- \int_{U} \left( \nabla \Im u(x) - A(x)\, \Re u(x)\right)\cdot\varphi(x)\, dx \\
		&\leq \int_{\bar U}|\nabla \Re u(x) + A(x)\, \Im u(x)|\,dx 
		+ \int_{\bar U}|\nabla \Im u(x) - A(x)\, \Re u(x)|\,dx\\
		&\leq \int_{\bar U}|\nabla \Re u(x)|\, dx + \int_{\bar U}|\nabla \Im u(x)|\, dx + \|A\|_{L^\infty(\bar U)} \,\left( \int_{\bar U}(|\Re u(x)|+|\Im u(x)|)\,dx \right) < \infty,
		\end{align*}
		which, taking the supremum over $\varphi$, concludes the proof.
	\end{proof}
\end{lemma}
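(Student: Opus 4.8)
The plan is to reduce this inclusion to the classical $W^{1,1}\hookrightarrow BV$ embedding by integrating by parts against the twisted divergence functional. So fix $u\in W^{1,1}_{\loc}(\Omega)$; since $W^{1,1}_{\loc}(\Omega)\subset L^1_{\loc}(\Omega,\C)$, it suffices to check that $|Du|_A(U)<\infty$ for every open set $U\Subset\Omega$, that is, that the two suprema $C_{1,A,u}(U)$ and $C_{2,A,u}(U)$ defining $|Du|_A(U)$ are finite.

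I would fix such a $U$ and take an arbitrary $\varphi\in C_c^\infty(U,\R^N)$ with $\|\varphi\|_{L^\infty(U)}\le 1$. Since $\Re u,\Im u\in W^{1,1}_{\loc}(\Omega)$, the usual integration-by-parts identity gives $\int_U\Re u\,\dvg\varphi\,dx=-\int_U\nabla\Re u\cdot\varphi\,dx$, and similarly for $\Im u$. Substituting into the definitions of $C_{1,A,u}(U)$ and $C_{2,A,u}(U)$, the two integrands become $-(\nabla\Re u+A\,\Im u)\cdot\varphi$ and $-(\nabla\Im u-A\,\Re u)\cdot\varphi$ respectively; using $|\varphi|\le 1$ pointwise and the triangle inequality, each of the two suprema is bounded by
\[
\int_{\bar U}|\nabla\Re u|\,dx+\int_{\bar U}|\nabla\Im u|\,dx+\|A\|_{L^\infty(\bar U)}\int_{\bar U}\big(|\Re u|+|\Im u|\big)\,dx.
\]

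To conclude, I would observe that $\bar U$ is a compact subset of $\Omega$ because $U\Subset\Omega$, hence the first two integrals are finite since $u\in W^{1,1}_{\loc}(\Omega)$, the last integral is finite since $u\in L^1_{\loc}(\Omega,\C)$, and $\|A\|_{L^\infty(\bar U)}<\infty$ since $A$ is locally bounded. Thus the displayed bound is finite and independent of $\varphi$, so that $C_{1,A,u}(U),C_{2,A,u}(U)<\infty$ and $u\in BV_{A,\loc}(\Omega)$. There is essentially no genuine obstacle here: the only point requiring care, and the only difference with the classical computation, is that the magnetic term produces the extra zero-order contribution $\|A\|_{L^\infty(\bar U)}\|u\|_{L^1(\bar U)}$, which is harmless precisely because one works on open sets compactly contained in $\Omega$.
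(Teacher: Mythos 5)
Your argument is correct and coincides with the paper's proof: integrate by parts against $\varphi$, use $\|\varphi\|_{L^\infty}\le 1$ and the local boundedness of $A$ to bound $C_{1,A,u}(U)$ and $C_{2,A,u}(U)$ uniformly in $\varphi$, exactly as done in the text. The only (harmless) difference is that you bound the two suprema separately rather than their sum in a single chain of inequalities.
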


\noindent
Next we prove that for $W^{1,1}_A(\Omega)$ functions the 
magnetic bounded variation semi-norm $|Du|_{A}(\Omega)$ 
boils down to the usual local magnetic semi-norm.

\begin{lemma}[$BV_A$ norm on $W^{1,1}_A$]\label{NormaSmooth}
	\label{Bv-lemmino}
Let $\Omega\subset\R^N$ be an open set.
	Let $A:\R^N\to\R^N$ be locally bounded. Assume that $u\in W^{1,1}_A(\Omega)$.
	Then $u\in BV_A(\Omega)$ and it holds
	\[
	|Du|_{A}(\Omega)=\int_{\Omega} |\nabla u-\i A(x) u|_1 dx.
	\]
	Furthermore, if $u\in BV_A(\Omega)\cap C^\infty(\Omega)$, then $u\in W^{1,1}_A(\Omega)$.
\end{lemma}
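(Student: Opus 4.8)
The plan is to reduce both assertions to the classical duality between $L^1$ and smooth compactly supported test fields, once the real and imaginary parts of $\nabla u-\i A u$ have been separated. First I would record the algebraic identity
$$\nabla u-\i A u=\bigl(\nabla\Re u+A\,\Im u\bigr)+\i\bigl(\nabla\Im u-A\,\Re u\bigr),$$
so that, by the definition \eqref{p-norm} of $|\cdot|_1$,
$$\int_\Omega|\nabla u-\i A u|_1\,dx=\|\nabla\Re u+A\,\Im u\|_{L^1(\Omega)}+\|\nabla\Im u-A\,\Re u\|_{L^1(\Omega)},$$
$|\cdot|$ being the Euclidean norm of $\R^N$. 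I would also note that $W^{1,1}_A(\Omega)\subset W^{1,1}_{\loc}(\Omega)$: on any $U\Subset\Omega$ the potential $A$ is bounded, hence $\i A u\in L^1(U)$ and thus $\nabla u=(\nabla u-\i A u)+\i A u\in L^1(U)$. This legitimizes the integration by parts $\int_\Omega\Re u\,\dvg\varphi\,dx=-\int_\Omega\nabla\Re u\cdot\varphi\,dx$ for $\varphi\in C_c^\infty(\Omega,\R^N)$ (and the analogous one for $\Im u$), so that
$$C_{1,A,u}(\Omega)=\sup\Bigl\{-\int_\Omega\bigl(\nabla\Re u+A\,\Im u\bigr)\cdot\varphi\,dx\ :\ \varphi\in C_c^\infty(\Omega,\R^N),\ \|\varphi\|_{L^\infty(\Omega)}\le 1\Bigr\},$$
with $C_{2,A,u}(\Omega)$ the supremum of $-\int_\Omega(\nabla\Im u-A\,\Re u)\cdot\varphi\,dx$ over the same class.

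The key step is then the classical fact that for $V\in L^1(\Omega,\R^N)$ one has $\sup\bigl\{-\int_\Omega V\cdot\varphi\,dx:\varphi\in C_c^\infty(\Omega,\R^N),\ \|\varphi\|_{L^\infty(\Omega)}\le 1\bigr\}=\|V\|_{L^1(\Omega)}$. The bound ``$\le$'' is immediate; for ``$\ge$'' I would take the essentially-bounded field $g_0:=-V\,|V|^{-1}\mathbf{1}_{\{V\neq 0\}}$, which realizes $\|V\|_{L^1(\Omega)}$ against general $L^\infty$ fields, truncate it to an exhaustion $K_j\Subset\Omega$, and mollify $g_0\mathbf{1}_{K_j}$ with a parameter small enough that the support stays inside $\Omega$; mollification does not increase the sup norm and dominated convergence drives the integrals back to $\|V\|_{L^1(\Omega)}$. (This is precisely the argument identifying $|Dv|(\Omega)$ with $\int_\Omega|\nabla v|\,dx$ for $v\in W^{1,1}(\Omega)$, cf.\ \cite{EG}.) Applying it with $V=\nabla\Re u+A\,\Im u$ and $V=\nabla\Im u-A\,\Re u$, both of which lie in $L^1(\Omega,\R^N)$ since $\nabla u-\i A u\in L^1(\Omega)$, and \emph{summing} — here I use that $|Du|_A(\Omega)$ is by definition the sum of the two suprema, so the real and imaginary optimizations decouple — I get $|Du|_A(\Omega)=\int_\Omega|\nabla u-\i A u|_1\,dx<\infty$, hence $u\in BV_A(\Omega)$.

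For the converse, given $u\in BV_A(\Omega)\cap C^\infty(\Omega)$, the gradient $\nabla u$ is an honest continuous field, so $V_1:=\nabla\Re u+A\,\Im u$ and $V_2:=\nabla\Im u-A\,\Re u$ lie in $L^1_{\loc}(\Omega,\R^N)$ and the identities for $C_{i,A,u}(\Omega)$ above still hold. I would then apply the duality fact on each member of an exhaustion $K_j\Subset\Omega$ (test fields supported in $K_j$, then mollified within $\Omega$) to obtain $\|V_i\|_{L^1(K_j)}\le C_{i,A,u}(\Omega)$ for all $j$, and let $j\to\infty$ via monotone convergence to conclude $\|V_i\|_{L^1(\Omega)}\le C_{i,A,u}(\Omega)<\infty$. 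Summing, $\int_\Omega|\nabla u-\i A u|_1\,dx\le|Du|_A(\Omega)<\infty$, and since $u\in BV_A(\Omega)\subset L^1(\Omega,\C)$ this gives $u\in W^{1,1}_A(\Omega)$.

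I expect the only genuine obstacle to be the $L^1$--$C_c^\infty$ duality step; everything else is the algebraic splitting of $\nabla u-\i A u$ together with the observation that the sum structure of $|Du|_A$ lets the two parts be treated independently. The point deserving a little care in both directions is that $A$ is merely \emph{locally} bounded, which is why an exhaustion/mollification argument is used rather than a one-shot one.
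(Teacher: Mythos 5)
Your proposal is correct and follows essentially the same route as the paper: split $\nabla u-\i Au$ into the real part $\nabla\Re u+A\,\Im u$ and imaginary part $\nabla\Im u-A\,\Re u$, integrate by parts against compactly supported test fields, and recover equality by approximating the unit sign field (the paper's $f,g$) with smooth compactly supported fields of sup-norm at most one, using dominated convergence, and for the smooth converse a compact exhaustion with monotone convergence. Packaging this as a general $L^1$--$C_c^\infty$ duality fact, and the explicit remark that $W^{1,1}_A(\Omega)\subset W^{1,1}_{\loc}(\Omega)$ legitimizes the integration by parts, are only presentational refinements of the paper's argument.
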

\begin{proof}
	If $u\in W^{1,1}_A(\Omega)$, then we have
	$$
	\nabla \Re u+ A\Im u\in L^1(\Omega),\qquad
	\nabla \Im u- A\Re u\in L^1(\Omega).
	$$
	For every $\varphi\in C_c^{\infty}(\Omega,\R^N)$ with $\|\varphi\|_{L^\infty(\Omega)}\leq 1$, we have
	\begin{align*}
	&\left|\int_{\Omega}  \Re u(x) \dvg \varphi(x)-A(x)\cdot \varphi(x) \, \Im u(x) dx\right|\\
	\nonumber
	& = \left|\int_{\Omega} \nabla\Re u(x)\cdot \varphi(x)
	+ A(x)\cdot\varphi(x)\, \Im u(x) dx\right|\leq \int_{\Omega}|\nabla \Re u+ A\Im u| dx,
	\end{align*}
	as well as
	\begin{align*}
	&\left|\int_{\Omega}  \Im u(x) \dvg \varphi(x)+A(x)\cdot\varphi(x) \, \Re u(x) dx\right|\\
	\nonumber
	& =\left|\int_{\Omega} \nabla\Im u(x)\cdot\varphi(x)
	-A(x)\cdot\varphi(x) \, \Re u(x) dx\right|\leq \int_{\Omega}|\nabla \Im u- A\Re u| dx,
	\end{align*}
	which, taking the supremum over $\varphi$, 
	proves $u\in BV_A(\Omega,\mathbb{C})$ and 
	\begin{align}\label{pp}
	|Du|_{A}(\Omega)\leq  \int_{\Omega}|\nabla u-\i A(x) u|_1dx.
	\end{align}
	Defining now $f,g\in L^\infty(\Omega,\R^N)$ by setting
	$$
	f(x):=
	\begin{cases}
	-\frac{\nabla \Re u(x)+ A(x)\Im u(x)}{|\nabla \Re u(x)+ A(x)\Im u(x)|}, & 
	\text{if $x\in\Omega$ and $\nabla \Re u(x)+ A(x)\Im u(x)\neq 0$,} \\
	0, & \text{otherwise,}
	\end{cases}
	$$
	and 
	$$
	g(x):=
	\begin{cases}
	-\frac{\nabla \Im u(x)- A(x)\Re u(x)}{|\nabla \Im u(x)- A(x)\Re u(x)|}, & 
	\text{if $x\in\Omega$ and $\nabla \Im u(x)- A(x)\Re u(x)\neq 0$,} \\
	0, & \text{otherwise,}
	\end{cases}
	$$
	we have that $\|f\|_\infty,\|g\|_\infty\leq 1$.\ By a standard
	approximation result, there exist two sequences 
	$\{\varphi_n\}_{n\in\N},\{\psi_n\}_{n\in \N}\subset C^\infty_c(\Omega,\R^N)$ such that 
	$\varphi_n\to f$ and $\psi_n\to g$ pointwise as $n\to\infty$, 
	with $\|\varphi_n\|_{L^\infty(\Omega)},\|\psi_n\|_{L^\infty(\Omega)}\leq 1$ for all $n \in \N$.\
	By definition of $C_{1,A,u}(\Omega)$, after integration by parts, it follows that, for every $n\geq 1$,
	$$
	C_{1,A,u}(\Omega)\geq -\sum_{i=1}^N\int_{\Omega} \big(\partial_{x_i}\Re u(x)
	+ A^{(i)}(x)\Im u(x) \big)\varphi^{(i)}_n(x)dx.
	$$
	By the Dominated Convergence Theorem and the definition of $f$, 
	letting $n\to \infty$ we obtain
	$$
	C_{1,A,u}(\Omega)\geq \int_{\Omega} |\nabla\Re u(x)
	+ A(x)\Im u(x)|dx.
	$$
	Similarly, using the sequence $\{\psi_n\}_{n\in\N}$ and arguing in a similar fashion yields
	$$
	C_{2,A,u}(\Omega)\geq \int_{\Omega} |\nabla\Im u(x)- A(x)\Re u(x)|dx,
	$$
	which, on account of \eqref{p-norm}, proves the opposite of inequality \eqref{pp}, concluding the proof of the first statement. If $u\in BV_A(\Omega)\cap C^\infty(\Omega)$, fix a compact set $K\subset\Omega$ with nonempty interior
	and consider 
	$$
	\tilde f:=f\chi_{{\rm int}(K)},\qquad 
	\tilde g:=g\chi_{{\rm int}(K)}.
	$$ 
	Then, as above, one can find two sequences	$\{\varphi_n\}_{n\in\N}$,$\{\psi_n\}_{n\in \N} \subset C^\infty_c({\rm int}(K),\R^N)$ such that $\varphi_n\to f$ and $\psi_n\to g$ pointwise  and $\|\varphi_n\|_{L^\infty({\rm int}(K))},\|\psi_n\|_{L^\infty({\rm int}(K))}\leq 1$, for all $n \in \N$. Then, we have
	\begin{align*}
	C_{1,A,u}(\Omega)& \geq \int_{\Omega}  \Re u(x) \dvg \varphi_n(x)-
	A(x)\cdot\varphi_n(x)\, \Im u(x) dx \\
	&=\int_{K}  \Re u(x) \dvg \varphi_n(x)-
	A(x)\cdot\varphi_n(x)\, \Im u(x) dx \\
	&=-\sum_{i=1}^N\int_{K} \big(\partial_{x_i}\Re u(x)
	+ A^{(i)}(x)\Im u(x) \big)\varphi^{(i)}_n(x)dx.
	\end{align*}
	Since $u\in C^\infty(\Omega)$, we have
	$\nabla \Re u+ A\Im u\in L^1(K)$. Thus, by the dominated convergence theorem,
	$$
	C_{1,A,u}(\Omega)\geq \int_{K} |\nabla\Re u(x)
	+ A(x)\Im u(x)|dx.
	$$
	The conclusion follows using an exhaustive sequence of compacts
	via monotone convergence.
\end{proof}

\smallskip
\noindent
We endow the space $BV_{A}(\Omega, \C)$ with the following norm:
$$
\|u\|_{BV_A(\Omega)} := \|u\|_{L^{1}(\Omega)} + |Du|_{A}(\Omega).
$$

\begin{lemma}[Norm equivalence]\label{Equiv}
	\label{PreExt}
	Let $\Omega \subset \R^N$ be an open and bounded set. 
	Let $A:\R^N \to \R^N$ be locally bounded.\
	Then $u \in BV_A(\Omega)$ if and only if $u \in BV(\Omega)$.
	Moreover, for every $u \in BV_A(\Omega)$, there exists a positive constant
	$K = K(A,\Omega)$ such that
	$$
	K^{-1} \|u\|_{BV(\Omega)} \leq \|u\|_{BV_{A}(\Omega)} \leq K \|u\|_{BV(\Omega)}.
	$$
	\begin{proof}
		Denoting by $\sup_{\varphi}$ the supremum over 
		functions  $\varphi\in C^\infty_c(\Omega,\R^N)$ with $\|\varphi\|_{L^\infty(\Omega)}\leq 1$, we get
		\begin{align*}
		|Du|(\Omega)&= |\Re u|(\Omega) + |\Im u|(\Omega) = 
		\sup_{\varphi}\int_{\Omega}\Re u(x) \, \dvg\,\varphi(x)\, dx 
		+ \sup_{\varphi}\int_{\Omega}\Im u(x) \, \dvg\,\varphi(x) \,dx \\
		&= \sup_{\varphi}\int_{\Omega} \Re u(x) \, \dvg\,\varphi(x) - \scal{A(x)}{\varphi(x)}\, \Im u(x) + \scal{A(x)}{\varphi(x)}\, \Im u \,dx \\
		&+
		\sup_{\varphi}\int_{\Omega} \Im u(x) \, \dvg\,\varphi(x) + \scal{A(x)}{\varphi(x)}\, \Re u(x) - \scal{A(x)}{\varphi(x)}\, \Re u(x) \, dx\\
		&\leq \sup_{\varphi}\int_{\Omega}\Re u(x) \, \dvg\,\varphi(x) - \scal{A(x)}{\varphi(x)} \Im u(x)\, dx + \sup_{\varphi}\int_{\Omega}\scal{A(x)}{\varphi(x)}\, \Im u(x)\,dx\\
		&+ \sup_{\varphi}\int_{\Omega} \Im u(x) \, \dvg\,\varphi(x) + \scal{A(x)}{\varphi(x)}\, \Re u(x)\, dx  + \sup_{\varphi}\int_{\Omega}\scal{A(x)}{(-\varphi)(x)}\, \Re u(x) \,dx \\
		& \leq C_{1,A,u}(\Omega) + C_{2,A,u}(\Omega) + \|A\|_{L^{\infty}(\Omega)} \|u\|_{L^{1}(\Omega)}.
		\end{align*}
		Therefore, we have that
		$$
		\|u\|_{BV(\Omega)} \leq (1+\|A\|_{L^{\infty}(\Omega)} ) \|u\|_{BV_{A}(\Omega)}.
		$$
		For the second inequality, we have
		\begin{align*}
		C_{1,A,u}(\Omega) &\leq \sup_{\varphi}\int_{\Omega}\Re u(x) \, \dvg\,\varphi(x)\,dx + \sup_{\varphi}\int_{\Omega}\scal{A(x)}{(-\varphi)(x)}\, \Im u(x) \,dx \\
		&\leq |D\Re u|(\Omega) + \|A\|_{L^{\infty}(\Omega)} \int_{\Omega}|\Im u|dx,
		\end{align*}
	and similarly for $C_{2,A,u}(\Omega)$.
		Therefore, we conclude
		$$
		\|u\|_{BV_{A}(\Omega)}\leq (1+\|A\|_{L^{\infty}(\Omega)} )\|u\|_{BV(\Omega)}.
		$$
		Calling $K := (1+\|A\|_{L^{\infty}(\Omega)}),$ concludes the proof.
	\end{proof}
\end{lemma}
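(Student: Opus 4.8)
The plan is to reduce everything to the elementary remark that, since $\Omega$ is bounded, $\overline{\Omega}$ is compact, so a locally bounded $A$ satisfies $\|A\|_{L^\infty(\Omega)}<\infty$; this is the only structural input, and it will yield the constant $K:=1+\|A\|_{L^\infty(\Omega)}$. Everything else is a comparison, test function by test function, between the four linear functionals whose suprema over $\{\varphi\in C_c^\infty(\Omega,\R^N):\|\varphi\|_{L^\infty(\Omega)}\le 1\}$ define $|D\Re u|(\Omega)$, $|D\Im u|(\Omega)$, $C_{1,A,u}(\Omega)$ and $C_{2,A,u}(\Omega)$. Throughout I will use that $u\in L^1(\Omega,\C)$ iff $\Re u,\Im u\in L^1(\Omega)$, together with $\|\Re u\|_{L^1(\Omega)}+\|\Im u\|_{L^1(\Omega)}=\|u\|_{L^1(\Omega)}$, which is just the $p=1$ case of the convention \eqref{p-norm}.

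For the bound $\|u\|_{BV_A(\Omega)}\le K\|u\|_{BV(\Omega)}$ I would fix an admissible $\varphi$ and split $\int_\Omega\Re u\,\dvg\varphi-A\cdot\varphi\,\Im u\,dx=\int_\Omega\Re u\,\dvg\varphi\,dx-\int_\Omega A\cdot\varphi\,\Im u\,dx$, estimating the first summand by $|D\Re u|(\Omega)$ and the second, using $|\varphi(x)|\le 1$ pointwise, by $\|A\|_{L^\infty(\Omega)}\|\Im u\|_{L^1(\Omega)}$; passing to the supremum over $\varphi$ gives $C_{1,A,u}(\Omega)\le|D\Re u|(\Omega)+\|A\|_{L^\infty(\Omega)}\|\Im u\|_{L^1(\Omega)}$, and symmetrically $C_{2,A,u}(\Omega)\le|D\Im u|(\Omega)+\|A\|_{L^\infty(\Omega)}\|\Re u\|_{L^1(\Omega)}$. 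Summing and recalling $|Du|(\Omega)=|D\Re u|(\Omega)+|D\Im u|(\Omega)$ yields $|Du|_A(\Omega)\le|Du|(\Omega)+\|A\|_{L^\infty(\Omega)}\|u\|_{L^1(\Omega)}$, and adding $\|u\|_{L^1(\Omega)}$ to both sides gives the claim; in particular $u\in BV(\Omega)\Rightarrow u\in BV_A(\Omega)$.

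The reverse inequality $\|u\|_{BV(\Omega)}\le K\|u\|_{BV_A(\Omega)}$ is obtained by splitting the same quantity the other way: $\int_\Omega\Re u\,\dvg\varphi\,dx=\big(\int_\Omega\Re u\,\dvg\varphi-A\cdot\varphi\,\Im u\,dx\big)+\int_\Omega A\cdot\varphi\,\Im u\,dx\le C_{1,A,u}(\Omega)+\|A\|_{L^\infty(\Omega)}\|\Im u\|_{L^1(\Omega)}$, so that, taking the supremum, $|D\Re u|(\Omega)\le C_{1,A,u}(\Omega)+\|A\|_{L^\infty(\Omega)}\|\Im u\|_{L^1(\Omega)}$, and likewise $|D\Im u|(\Omega)\le C_{2,A,u}(\Omega)+\|A\|_{L^\infty(\Omega)}\|\Re u\|_{L^1(\Omega)}$. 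Adding the two and proceeding as above produces $|Du|(\Omega)\le|Du|_A(\Omega)+\|A\|_{L^\infty(\Omega)}\|u\|_{L^1(\Omega)}$, hence $\|u\|_{BV(\Omega)}\le K\|u\|_{BV_A(\Omega)}$; this also shows that $u\in BV_A(\Omega)$ forces $|D\Re u|(\Omega),|D\Im u|(\Omega)<\infty$, i.e.\ $u\in BV(\Omega)$, which closes the equivalence.

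I do not expect a genuine obstacle. The one point requiring care is that the supremum of a sum of functionals is only bounded by, not equal to, the sum of the suprema, so each functional must be split in the direction that makes this one-sided inequality point the right way — grouping the magnetic term with $\dvg\varphi$ for the upper estimate on the $C_{i,A,u}(\Omega)$, and isolating it for the lower estimate — while the bound $|\varphi(x)|\le 1$ is what lets the magnetic remainder be absorbed into $\|A\|_{L^\infty(\Omega)}\|u\|_{L^1(\Omega)}$.
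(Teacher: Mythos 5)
Your proof is correct and follows essentially the same route as the paper: the same add-and-subtract of the magnetic term $A\cdot\varphi\,\Im u$ (resp.\ $A\cdot\varphi\,\Re u$), the same one-sided estimate of the supremum of a sum by the sum of suprema, and the same constant $K=1+\|A\|_{L^\infty(\Omega)}$, with the boundedness of $A$ on $\Omega$ coming from local boundedness plus boundedness of $\Omega$ (a point the paper uses implicitly and you state explicitly). No gaps.
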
 

\begin{lemma}[Structure Theorem for $BV_{A}$ functions]\label{Struttura}
Let $\Omega \subset \R^N$ be an open and bounded set, $A:\R^N\to\R^N$ locally bounded and $u \in BV_{A}(\Omega)$.\ There exists a unique $\R^{2N}$-valued
finite Radon measure $\mu_{A,u} = (\mu_{1,A,u}, \mu_{2,A,u})$ such that 
\begin{equation*}
\begin{aligned}
\int_{\Omega}u(x) \dvg \varphi(x) + \i A(x)\cdot\varphi(x) \, u(x) \, dx &= \int_{\Omega}\Re u(x) \dvg \varphi(x) - A(x)\cdot \varphi(x)\, \Im u(x) \, dx\\
&\quad + \i \int_{\Omega} \Im u(x) \dvg \varphi(x) + A(x) \cdot \varphi(x) \Re u(x)\, dx \\
&= \int_{\Omega} \varphi(x)\cdot d(\mu_{1,A,u}+\i\mu_{2,A,u})(x), 
\end{aligned}
\end{equation*}
for every $\varphi\in C^{\infty}_c(\Omega, \R^N)$ and
\[
|Du|_A(\Omega)=|\mu_{1,A,u}|(\Omega)+ |\mu_{2,A,u}|(\Omega).
\]
\end{lemma}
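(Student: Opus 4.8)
The plan is to read off the measures $\mu_{1,A,u}$ and $\mu_{2,A,u}$ from the Riesz representation theorem, exactly in the spirit of the proof of the extension lemma above. First note that the first displayed equality in the statement is a purely algebraic identity: writing $u=\Re u+\i\,\Im u$ and expanding $u\,\dvg\varphi+\i A(x)\cdot\varphi(x)\,u$, its real part is $\Re u\,\dvg\varphi-A(x)\cdot\varphi(x)\,\Im u$ and its imaginary part is $\Im u\,\dvg\varphi+A(x)\cdot\varphi(x)\,\Re u$, so separating real and imaginary parts gives the first equality for every $\varphi\in C^\infty_c(\Omega,\R^N)$. The genuine content is therefore the representation by Radon measures and the total variation identity.

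To get these, I would introduce the two linear functionals on $C^\infty_c(\Omega,\R^N)$
$$
L_1(\varphi):=\int_{\Omega}\Re u(x)\,\dvg\varphi(x)-A(x)\cdot\varphi(x)\,\Im u(x)\,dx,\qquad
L_2(\varphi):=\int_{\Omega}\Im u(x)\,\dvg\varphi(x)+A(x)\cdot\varphi(x)\,\Re u(x)\,dx.
$$
By homogeneity in $\varphi$, the definitions of $C_{1,A,u}(\Omega)$ and $C_{2,A,u}(\Omega)$ yield $|L_i(\varphi)|\le C_{i,A,u}(\Omega)\,\|\varphi\|_{L^\infty(\Omega)}$ for every $\varphi$, and since $u\in BV_A(\Omega)$ both constants are finite; hence each $L_i$ extends uniquely to a bounded linear functional on $C_0(\Omega,\R^N)$ by density of $C^\infty_c$. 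Applying the Riesz representation theorem, e.g.\ \cite[Theorem~1.38]{EG}, produces unique finite $\R^N$-valued Radon measures $\mu_{1,A,u},\mu_{2,A,u}$ with $L_i(\varphi)=\int_\Omega\varphi\cdot d\mu_{i,A,u}$ for all $\varphi\in C_c(\Omega,\R^N)$, whose total variations are the dual norms
$$
|\mu_{i,A,u}|(\Omega)=\sup\Big\{L_i(\varphi)\ :\ \varphi\in C^\infty_c(\Omega,\R^N),\ \|\varphi\|_{L^\infty(\Omega)}\le 1\Big\}=C_{i,A,u}(\Omega),
$$
the restriction to smooth $\varphi$ in the supremum being harmless again by density. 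Setting $\mu_{A,u}:=(\mu_{1,A,u},\mu_{2,A,u})$ and combining with the algebraic identity gives the stated integration-by-parts formula, and $|Du|_A(\Omega)=C_{1,A,u}(\Omega)+C_{2,A,u}(\Omega)=|\mu_{1,A,u}|(\Omega)+|\mu_{2,A,u}|(\Omega)$ is then immediate from the definition of $|Du|_A(\Omega)$. For uniqueness, if $(\nu_1,\nu_2)$ also satisfies the representation, subtracting and separating real and imaginary parts gives $\int_\Omega\varphi\cdot d(\mu_{i,A,u}-\nu_i)=0$ for every $\varphi\in C^\infty_c(\Omega,\R^N)$, whence $\nu_i=\mu_{i,A,u}$ by density of $C^\infty_c$ in $C_c(\Omega,\R^N)$ and the uniqueness part of the Riesz theorem.

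I do not expect a real obstacle here: the argument is a routine application of the Riesz representation theorem, and the only point deserving a word of care is that $C_{i,A,u}(\Omega)$ is defined through smooth compactly supported test functions, whereas the representation theorem is naturally phrased on $C_c$ or $C_0$. This is reconciled by the density of $C^\infty_c(\Omega,\R^N)$ in those spaces, which simultaneously transfers the boundedness of each $L_i$ and identifies its dual norm with $C_{i,A,u}(\Omega)$, so that the total variation of the representing measure is precisely the constant appearing in the definition of $|Du|_A(\Omega)$.
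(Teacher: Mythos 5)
Your argument is correct and follows essentially the same route as the paper: bound the two functionals $L_1,L_2$ by $C_{i,A,u}(\Omega)\|\varphi\|_{L^\infty(\Omega)}$, extend them to $C_c(\Omega,\R^N)$ (the paper invokes Hahn--Banach with norm preservation, you use density of $C^\infty_c$, which is an equivalent and if anything slightly cleaner step since it also gives uniqueness of the extension), and then apply the Riesz representation theorem to obtain $\mu_{i,A,u}$ with $|\mu_{i,A,u}|(\Omega)=C_{i,A,u}(\Omega)$, whence $|Du|_A(\Omega)=|\mu_{1,A,u}|(\Omega)+|\mu_{2,A,u}|(\Omega)$. No gaps worth flagging; the observation that the first displayed identity is purely algebraic is also exactly how the paper treats it.
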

\begin{proof}
Of course, we have 
\[
\left|\int_{\Omega}\Re u(x) \dvg \varphi(x) - A(x)\cdot \varphi(x)\, \Im u(x) \, dx\right|\leq C_{1,A,u}(\Omega) \|\varphi\|_{L^\infty(\Omega)}, \qquad \forall \varphi\in C^{\infty}_c(\Omega,\R^N).
\]
Then, a standard application of the Hahn-Banach theorem yields the existence
of a linear and continuous extension $L$ of the functional $\Psi:C^{\infty}_c(\Omega,\R^N)\to\R$
\[
\langle\Psi,\varphi\rangle= \int_{\Omega}\Re u(x) \dvg \varphi(x) - A(x)\cdot \varphi(x)\, \Im u(x) \, dx
\]
to the normed space $(C_c(\Omega, \R^N),\|\cdot\|_{L^\infty(\Omega)})$ such that 
$$
\|L\|=\|\Psi\|= C_{1,A,u}(\Omega).
$$
On the other hand, by the Riesz representation Theorem (cf.\ \cite[Corollary 1.55]{ABF})
there exists a unique $\R^N$-valued finite Radon measure $\mu_{1,A,u}$ with
\begin{align*}
L (\varphi) = \int_{\Omega}\varphi(x) \cdot d\mu_{1,A,u}(x),
\qquad \forall \varphi\in C_c(\Omega,\R^N),
\end{align*}
and such that $|\mu_{1,A,u}|(\Omega)=\|L\|$. Thus $|\mu_{1,A,u}|(\Omega)=C_{1,A,u}(\Omega)$.
The same argument can be repeated verbatim for the functional
\[\varphi \mapsto \int_{\Omega} \Im u(x) \dvg \varphi(x) + A(x) \cdot \varphi(x)\, \Re u(x) \, dx,\]
which concludes the proof.
\end{proof}

\begin{lemma}[Lower semicontinuity of $|Du|_A(\Omega)$]
	\label{semic}
	Let $A:\R^N\to\R^N$ be locally bounded.
Let $\Omega\subset\R^N$ be an open set and $\{u_k\}_{k\in\N}\subset BV_A(\Omega)$ 
a sequence converging locally in $L^1(\Omega)$ to a function $u$. Then
\[
\liminf_{k\to\infty} |Du_k|_A(\Omega)\geq |Du|_A(\Omega).
\]
\end{lemma}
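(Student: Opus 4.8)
The plan is to work directly from the variational definition $|Du|_A(\Omega)=C_{1,A,u}(\Omega)+C_{2,A,u}(\Omega)$, exploiting the fact that, for a \emph{fixed} admissible test function, each of the two defining functionals is continuous under $L^1_{\loc}(\Omega)$ convergence. Indeed, let $\varphi\in C_c^\infty(\Omega,\R^N)$ with $\|\varphi\|_{L^\infty(\Omega)}\leq 1$ and set $K:=\supp\varphi\Subset\Omega$. Then $\dvg\varphi$ is bounded on $\Omega$ and, since $A$ is locally bounded, $|A(x)\cdot\varphi(x)|\leq \|A\|_{L^\infty(K)}$ for $x\in K$ while the product vanishes outside $K$. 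Hence the map
$$
v\mapsto \int_\Omega \Re v(x)\,\dvg\varphi(x)-A(x)\cdot\varphi(x)\,\Im v(x)\,dx
$$
depends on $v$ only through its restriction to the compact set $K$, with bounded coefficients, and is therefore continuous with respect to convergence in $L^1(K)$; the same holds for the functional defining $C_{2,A,u}$.

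First I would fix such a $\varphi$ and pass to the limit. Since $u_k\to u$ in $L^1_{\loc}(\Omega)$, in particular $\Re u_k\to\Re u$ and $\Im u_k\to\Im u$ in $L^1(K)$, so by the continuity just observed
$$
\int_\Omega \Re u(x)\,\dvg\varphi(x)-A(x)\cdot\varphi(x)\,\Im u(x)\,dx
=\lim_{k\to\infty}\int_\Omega \Re u_k(x)\,\dvg\varphi(x)-A(x)\cdot\varphi(x)\,\Im u_k(x)\,dx
\leq \liminf_{k\to\infty}C_{1,A,u_k}(\Omega),
$$
where the last inequality holds because, $\varphi$ being admissible, each integral on the right is bounded above by $C_{1,A,u_k}(\Omega)$. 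Taking now the supremum over all admissible $\varphi$ on the left-hand side yields $C_{1,A,u}(\Omega)\leq \liminf_{k\to\infty}C_{1,A,u_k}(\Omega)$, and the identical argument applied to the second functional gives $C_{2,A,u}(\Omega)\leq \liminf_{k\to\infty}C_{2,A,u_k}(\Omega)$.

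Finally I would add the two inequalities and invoke the superadditivity of the lower limit, $\liminf_k a_k+\liminf_k b_k\leq\liminf_k(a_k+b_k)$, to conclude
$$
|Du|_A(\Omega)=C_{1,A,u}(\Omega)+C_{2,A,u}(\Omega)
\leq \liminf_{k\to\infty}C_{1,A,u_k}(\Omega)+\liminf_{k\to\infty}C_{2,A,u_k}(\Omega)
\leq \liminf_{k\to\infty}|Du_k|_A(\Omega).
$$
I do not expect a serious obstacle: the only point requiring a little care is that the convergence $u_k\to u$ is merely local, which is precisely why one argues test function by test function (each having compact support in $\Omega$) instead of attempting to pass to the limit inside the supremum, and why local boundedness of $A$ — rather than global boundedness — is all that is needed.
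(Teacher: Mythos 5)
Your argument is correct and follows essentially the same route as the paper's proof: fix an admissible test function $\varphi$, use the local $L^1$ convergence to pass to the limit in each of the two defining integrals, bound by $\liminf_k C_{i,A,u_k}(\Omega)$, and then take the supremum over $\varphi$. The only difference is that you spell out the continuity estimate on $\supp\varphi$ and the superadditivity of the $\liminf$, steps the paper leaves implicit in its closing sentence.
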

\begin{proof}
Fix $\varphi\in C^\infty_c(\Omega,\R^N)$ with $\|\varphi\|_{L^\infty(\Omega)}\leq 1$. By the definitions of
$C_{i,A,u_k}(\Omega)$, we have
\begin{align*}
& C_{1,A,u_k}(\Omega)\geq \int_{\Omega}  \Re u_k(x) \dvg\varphi(x)-
A(x)\cdot\varphi(x) \Im u_k(x) dx, \\
& C_{2,A,u_k}(\Omega)\geq\int_{\Omega}  \Im u_k(x) \dvg\varphi(x)+
A(x)\cdot\varphi(x) \Re u_k(x) dx.
\end{align*}
By the convergence of $\{u_k\}_{k\in\N}$ in $L^1_{\loc}(\Omega,\C)$ to $u$, we get
\begin{align*}
& \liminf_{k\to\infty}C_{1,A,u_k}(\Omega)\geq \int_{\Omega}  \Re u(x) \dvg\varphi(x)-
A(x)\cdot\varphi(x) \Im u(x) dx, \\
& \liminf_{k\to\infty}C_{2,A,u_k}(\Omega)\geq\int_{\Omega}  \Im u(x) \dvg\varphi(x)+
A(x)\cdot\varphi(x) \Re u(x) dx.
\end{align*}
The assertion follows by the definition of $|Du|_A(\Omega)$ and the arbitrariness of such functions $\varphi$.
\end{proof}
\begin{lemma}
The space $(BV_A(\Omega),\|\cdot\|_{BV_A(\Omega)})$ is a real Banach space. 
\end{lemma}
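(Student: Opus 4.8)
The plan is to proceed in two stages: first check that $(BV_A(\Omega),\|\cdot\|_{BV_A(\Omega)})$ is a normed real vector space, then prove completeness by the by-now classical scheme that combines completeness of $L^1$ with the lower semicontinuity of the total variation.

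For the first stage, observe that for fixed $u$ the map $\varphi\mapsto\int_\Omega \Re u\,\dvg\varphi - A\cdot\varphi\,\Im u\,dx$ is \emph{linear} in $\varphi$, and $C_{1,A,u}(\Omega)$ is nothing but its (possibly infinite) operator seminorm on $(C^\infty_c(\Omega,\R^N),\|\cdot\|_{L^\infty(\Omega)})$; likewise for $C_{2,A,u}(\Omega)$. Since these functionals depend linearly on $u$ (because $\Re$ and $\Im$ are $\R$-linear), the maps $u\mapsto C_{1,A,u}(\Omega)$ and $u\mapsto C_{2,A,u}(\Omega)$ are subadditive and absolutely homogeneous with respect to real scalars; hence so is $u\mapsto|Du|_A(\Omega)$. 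Consequently $BV_A(\Omega)=\{u\in L^1(\Omega,\C): |Du|_A(\Omega)<\infty\}$ is a real linear subspace of $L^1(\Omega,\C)$, and $\|u\|_{BV_A(\Omega)}=\|u\|_{L^1(\Omega)}+|Du|_A(\Omega)$ satisfies the triangle inequality and real homogeneity; it is definite since $\|u\|_{BV_A(\Omega)}=0$ forces $\|u\|_{L^1(\Omega)}=0$, i.e. $u=0$ a.e.

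For completeness, let $\{u_k\}_{k\in\N}\subset BV_A(\Omega)$ be Cauchy for $\|\cdot\|_{BV_A(\Omega)}$. Then it is Cauchy in $L^1(\Omega,\C)$, so $u_k\to u$ in $L^1(\Omega,\C)$ for some $u\in L^1(\Omega,\C)$; in particular $u_k\to u$ locally in $L^1(\Omega)$. Since a Cauchy sequence is bounded, $M:=\sup_k|Du_k|_A(\Omega)<\infty$, and Lemma~\ref{semic} gives $|Du|_A(\Omega)\le\liminf_k|Du_k|_A(\Omega)\le M<\infty$, so $u\in BV_A(\Omega)$. It remains to show $\|u_k-u\|_{BV_A(\Omega)}\to 0$. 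Fix $\varepsilon>0$ and pick $n_\varepsilon$ with $|D(u_k-u_m)|_A(\Omega)<\varepsilon$ for all $k,m\ge n_\varepsilon$ (this uses that $u_k-u_m\in BV_A(\Omega)$, from the first stage). Freezing $k\ge n_\varepsilon$ and letting $m\to\infty$, the sequence $\{u_k-u_m\}_m$ converges to $u_k-u$ locally in $L^1(\Omega)$, so another application of Lemma~\ref{semic} yields $|D(u_k-u)|_A(\Omega)\le\liminf_{m\to\infty}|D(u_k-u_m)|_A(\Omega)\le\varepsilon$. Together with $\|u_k-u\|_{L^1(\Omega)}\to 0$ this gives $\limsup_k\|u_k-u\|_{BV_A(\Omega)}\le\varepsilon$, and since $\varepsilon$ was arbitrary, $u_k\to u$ in $BV_A(\Omega)$.

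The argument is essentially soft; the only point that requires a little care is the twofold use of Lemma~\ref{semic} — once to place the limit in $BV_A(\Omega)$, and once, along the second index $m$, to upgrade $L^1$-convergence to convergence in the $BV_A$-norm — which is precisely why it matters to have established beforehand that $BV_A(\Omega)$ is closed under differences. For a bounded domain $\Omega$ one could alternatively shortcut completeness by invoking the norm equivalence of Lemma~\ref{Equiv} together with the classical completeness of $BV(\Omega)$ (applied separately to $\Re u$ and $\Im u$), but the route above has the advantage of not requiring boundedness of $\Omega$.
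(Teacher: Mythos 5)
Your proof is correct and follows essentially the same route as the paper: completeness of $L^1(\Omega)$ to produce the limit $u$, followed by the lower semicontinuity of $|D\cdot|_A(\Omega)$ (Lemma~\ref{semic}) applied to the differences $u_k-u_m$ with one index frozen, exactly as in the paper's argument. The only differences are cosmetic: you spell out the norm axioms that the paper leaves to the reader and note the alternative via Lemma~\ref{Equiv}, neither of which changes the substance.
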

\begin{proof}
	It is readily seen that $\|\cdot\|_{BV_A(\Omega)}$ is a norm (to this aim, it is enough to check that the map $u\mapsto |Du|_A(\Omega)$ defines a semi-norm
	over $BV_A(\Omega)$, which is left to the reader). Let us prove that the space is complete. Let $\{u_n\}_{n\in\N}\subset BV_A(\Omega)$ be a Cauchy sequence, namely for every $\eps>0$ there exists $n_0\in\N$ such that
	$$
	\int_{\Omega}|u_n-u_k|_1dx+|D(u_n-u_k)|_{A}(\Omega)<\eps,\quad
	\text{$\forall \,n,k\geq n_0$.}
	$$
	In particular, $\{u_n\}_{n\in\N}$ is a Cauchy sequence in the Banach space
	$(L^1(\Omega),\|\cdot\|_{L^1(\Omega)})$, which implies that there exists $u\in L^1(\Omega)$ with $\|u_n-u\|_{L^1(\Omega)}\to 0$, as $n\to\infty$. Therefore,
	in light of Lemma~\ref{semic}, we get 
	$$
	|D(u-u_k)|_{A}(\Omega)\leq \liminf_n |D(u_n-u_k)|_{A}(\Omega)\leq\eps,\quad
	\text{$\forall\,k\geq n_0$,}
	$$
	namely $|D(u_n-u)|_{A}(\Omega) \to 0$, as $n\to\infty$, which concludes the proof.
	\end{proof}
\begin{lemma}[Multiplication by Lipschitz functions]\label{form_LIP}
Let $\Omega\subset\R^N$ be an open set.
Let $A:\R^N\to\R^N$ be locally bounded and 
$u\in BV_{A, \mathrm{loc}}(\Omega)$. Then 
for every locally Lipschitz $\psi:\Omega\to \R$ the function $u\psi\in BV_{A, \mathrm{loc}}(\Omega)$ and 
\begin{align*}
\mu_{1,A,\psi u}&=\psi \mu_{1,A,u}- \Re u \cdot \nabla \psi \mathcal{L}^N, \\
\mu_{2,A,\psi u}&=\psi \mu_{2,A,u}- \Im u\cdot \nabla \psi \mathcal{L}^N
\end{align*}
where $\mathcal{L}^N$ denotes the $N-$dimensional Lebesgue measure.
\begin{proof}
Consider $U\Subset\Omega$ open and let $\varphi\in C^{\infty}_c(U,\R^N)$ be such that $\|\varphi\|_{L^\infty(U)}\leq 1$.
By Rademacher's theorem we have 
$\psi \dvg\varphi=\dvg(\psi \varphi)-\varphi\cdot \nabla\psi$ a.e. in $U$. Therefore, up to smoothing $\psi$, we get
\begin{equation*}
\begin{aligned}
& \int_{U}\Re (u\psi)(x) \, \dvg \varphi(x) - A(x) \cdot \varphi(x) \, \Im(u\psi)(x) dx \\
&=\int_{U}\psi(x) \Re u(x)\dvg\varphi(x)-A(x)\cdot \varphi(x) \psi(x) \Im u(x) dx\\
&=\int_{U}\Re u(x)\dvg(\psi\varphi)(x)-A(x)\cdot \varphi(x) \psi(x) \Im u(x) dx-\int_{U}\Re u(x)\varphi(x)\cdot \nabla\psi(x) dx\\
& \leq C_{1,A,u}(U)\|\psi\|_{L^{\infty}(\overline{U})}+\mathrm{Lip}(\psi)\|u\|_{L^{1}(U)}.
\end{aligned}\end{equation*}
A similar estimate holds for the second term,
proving $u \psi\in BV_{A,\mathrm{loc}}(\Omega)$. 
By Lemma \ref{Struttura}, we have
\begin{align*}
&\int_{\Omega}\varphi(x)\cdot d\mu_{1,A,u\psi}\\
&=\int_{\Omega}\psi(x) \Re u(x)\dvg\varphi(x)-A(x)\cdot \varphi(x) \psi(x) \Im u(x)\\
&=\int_{\Omega}\Re u(x)\dvg(\psi\varphi)(x)-A(x)\cdot \varphi(x) \psi(x) \Im u(x) dx-\int_{\Omega}\Re u(x)\varphi(x)\cdot \nabla\psi(x) dx\\
&=\int_{\Omega} \varphi(x) \psi(x) d\mu_{1,A,u}-\int_{\Omega}\Re u(x)\varphi(x)\cdot \nabla\psi(x) dx.
\end{align*}
and the thesis follows. A similar argument holds also for $\mu_{2,A,u\psi}$, and this concludes the proof.
\end{proof}
\end{lemma}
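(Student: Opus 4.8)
The plan is to reduce the statement to the representation of the measures $\mu_{i,A,u}$ provided by Lemma~\ref{Struttura}, by combining the Leibniz rule for the divergence with a mollification of $\psi$.

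First I would fix an open set $U\Subset\Omega$ and a test field $\varphi\in C^\infty_c(U,\R^N)$ with $\|\varphi\|_{L^\infty(U)}\le 1$. The point where care is needed is that $\psi$ is only locally Lipschitz, so $\psi\varphi$ is merely a compactly supported Lipschitz field and is not admissible in the definitions of $C_{1,A,u}$ and $C_{2,A,u}$. I would therefore replace $\psi$ by a standard mollification $\psi_\eps=\psi*\rho_\eps$, which for $\eps$ small is smooth on a neighbourhood of $\overline U$, satisfies $\psi_\eps\to\psi$ uniformly on $\overline U$, $\nabla\psi_\eps\to\nabla\psi$ in $L^1(U)$, and $\|\nabla\psi_\eps\|_{L^\infty(U)}\le L$, where $L$ denotes the Lipschitz constant of $\psi$ on a fixed compact neighbourhood of $\overline U$. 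For such $\psi_\eps$ the field $\psi_\eps\varphi$ lies in $C^\infty_c(U,\R^N)$ and the identity $\psi_\eps\dvg\varphi=\dvg(\psi_\eps\varphi)-\scal{\varphi}{\nabla\psi_\eps}$ holds pointwise.

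Second, since $\psi$ is real-valued, $\Re(u\psi_\eps)=\psi_\eps\Re u$ and $\Im(u\psi_\eps)=\psi_\eps\Im u$; inserting the Leibniz identity yields
$$\int_U\big(\Re(u\psi_\eps)\,\dvg\varphi-\scal{A}{\varphi}\,\Im(u\psi_\eps)\big)\,dx=\int_U\big(\Re u\,\dvg(\psi_\eps\varphi)-\scal{A}{\psi_\eps\varphi}\,\Im u\big)\,dx-\int_U\Re u\,\scal{\varphi}{\nabla\psi_\eps}\,dx.$$
The first term on the right is at most $C_{1,A,u}(U)\,\|\psi_\eps\|_{L^\infty(\overline U)}$ (apply the definition of $C_{1,A,u}(U)$ to the admissible field $\psi_\eps\varphi/\|\psi_\eps\|_{L^\infty(\overline U)}$), and the second is at most $L\,\|u\|_{L^1(U)}$. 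Letting $\eps\to0$, using $\psi_\eps\to\psi$ uniformly on $\overline U$, $\nabla\psi_\eps\to\nabla\psi$ in $L^1(U)$ and $u\in L^1(U)$, I obtain
$$\int_U\big(\Re(u\psi)\,\dvg\varphi-\scal{A}{\varphi}\,\Im(u\psi)\big)\,dx\le C_{1,A,u}(U)\,\|\psi\|_{L^\infty(\overline U)}+L\,\|u\|_{L^1(U)}<\infty.$$
Taking the supremum over $\varphi$ gives $C_{1,A,u\psi}(U)<\infty$; the same computation with $\Re u$ and $\Im u$ interchanged and the sign of the $A$-term reversed gives $C_{2,A,u\psi}(U)<\infty$. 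Since $U\Subset\Omega$ was arbitrary, $u\psi\in BV_{A,\loc}(\Omega)$.

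Finally, to identify the measures I would repeat the same chain of equalities keeping track of limits instead of estimating. By Lemma~\ref{Struttura}, on each $U\Subset\Omega$ the measure $\mu_{1,A,u\psi}$ is the unique $\R^N$-valued Radon measure representing $\varphi\mapsto\int_U\big(\Re(u\psi)\,\dvg\varphi-\scal{A}{\varphi}\,\Im(u\psi)\big)\,dx$. In the displayed identity, the first term on the right equals $\int_U\psi_\eps\,\scal{\varphi}{d\mu_{1,A,u}}$, which converges to $\int_U\psi\,\scal{\varphi}{d\mu_{1,A,u}}$ because $\mu_{1,A,u}$ is finite and $\psi_\eps\to\psi$ uniformly on $\overline U$; the last term converges to $\int_U\Re u\,\scal{\varphi}{\nabla\psi}\,dx$. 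Hence
$$\int_U\scal{\varphi}{d\mu_{1,A,u\psi}}=\int_U\psi\,\scal{\varphi}{d\mu_{1,A,u}}-\int_U\Re u\,\scal{\varphi}{\nabla\psi}\,dx=\int_U\varphi\cdot d\big(\psi\,\mu_{1,A,u}-\Re u\,\nabla\psi\,\mathcal{L}^N\big),$$
and the uniqueness part of Lemma~\ref{Struttura} forces $\mu_{1,A,u\psi}=\psi\,\mu_{1,A,u}-\Re u\,\nabla\psi\,\mathcal{L}^N$; the identity for $\mu_{2,A,u\psi}$ follows verbatim. The one genuinely delicate point is the mollification: one must ensure $\|\nabla\psi_\eps\|_{L^\infty}$ does not exceed the local Lipschitz constant, that $\nabla\psi_\eps\to\nabla\psi$ in $L^1_{\loc}$, and that $\psi_\eps\to\psi$ uniformly on compacts so that integration against the finite measure $\mu_{1,A,u}$ passes to the limit; the rest is just the Leibniz rule and the uniqueness from Lemma~\ref{Struttura}.
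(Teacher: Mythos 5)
Your proposal is correct and follows essentially the same route as the paper: apply the Leibniz identity $\psi\,\dvg\varphi=\dvg(\psi\varphi)-\varphi\cdot\nabla\psi$, estimate against $C_{1,A,u}(U)$ and ${\rm Lip}(\psi)\|u\|_{L^1(U)}$ to get $u\psi\in BV_{A,\loc}(\Omega)$, and then identify the measures through Lemma~\ref{Struttura}. The only difference is that you spell out the mollification argument (uniform convergence of $\psi_\eps$ on $\overline U$, $\nabla\psi_\eps\to\nabla\psi$ in $L^1$, uniform Lipschitz bound) that the paper compresses into ``up to smoothing $\psi$'', which is a welcome clarification rather than a new method.
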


\noindent
Let $\eta\in C^{\infty}_0(\R^N)$ be a radial nonnegative 
function with $\int_{\R^N} \eta(x) dx=1$ and ${\rm supp}(\eta)\subset B_1(0)$. 
Given $\varepsilon>0$ and $u\in L^1(\Omega; \mathbb{C})$, 
extended to zero out of $\Omega$, we define the usual regularization
\begin{align}\label{mollif}
	u_\varepsilon(x):=\frac{1}{\varepsilon^N}\int_{\R^N}\eta\left(\frac{x-y}{\varepsilon}\right) u(y) dy=\frac{1}{\varepsilon^N}\int_{B(x,\eps)}\eta\left(\frac{x-y}{\varepsilon}\right) u(y) dy.
\end{align}

\noindent											
Next we have the magnetic counterpart of the classic
Anzellotti-Giaquinta Theorem \cite{AG78}.
\begin{lemma}[Approximation with smooth functions]\label{Approx}
	\label{approxim}
	Suppose that $A: \R^N \to \R^N$ is locally Lipschitz.
Let $\Omega \subset \R^N$ be an open and bounded set and let $u \in BV_{A}(\Omega)$.
Then there exists a sequence $\{u_k\}_{k \in \N} \subset C^{\infty}(\Omega, \mathbb{C})$ such that
$$
\lim_{k \to \infty} \int_{\Omega}|u_k - u|_1 \,dx =0
\quad\,\,\text{and}\quad\,\,
\lim_{k \to \infty}|Du_k|_A (\Omega) = |Du|_{A}(\Omega).
$$
\begin{proof}
We follow closely the proof of \cite[Theorem 5.3]{EG}.
In light of the semicontinuity property (Lemma~\ref{semic}),
it is enough to prove that, for every $\eps >0$, there exists a
function $v_{\eps} \in C^{\infty}(\Omega)$ such that
\begin{equation}\label{enough}
\int_{\Omega}|u-v_\eps|_1 dx < \eps, \quad 
\textrm{and} \quad |Dv_{\eps}|_A (\Omega) < |Du|_A (\Omega) + \eps.
\end{equation}
Let $\{\Omega_{j}\}_{j\in\N}$ be a sequence of domains 
defined, for  $m \in \N$, as follows
$$
\Omega_j := \left\{ x \in \Omega \,\,|\,\, \textrm{dist}(x,\partial \Omega) > \dfrac{1}{m+j}\right\} \cap B(0,k+m), \qquad j \in\N,
$$
\noindent where $B(0,k+m)$ denotes the open ball of center $0$ and radius $k+m$.\\
Since $|Du|_{A}$ is a Radon measure, given $\eps>0$ we can choose $m \in \N$ 
so large that
\begin{equation}\label{misura}
|Du|_{A}(\Omega \setminus \Omega_0) < \eps.
\end{equation}
We want to stress that the sequence of open domains $\{\Omega_{j}\}$ 
is built in such a way that
$$
\Omega_{j} \subset \Omega_{j+1} \subset \Omega, \quad \textrm{for any } j\in\N, \quad
\textrm{and} \quad \bigcup_{j=0}^\infty \Omega_{j} = \Omega.
$$
We now define another sequence of open domains $\{U_{j}\}_{j \in\N}$, by setting
$$
U_0 := \Omega_0, \qquad U_j := \Omega_{j+1} \setminus \overline{\Omega}_{j-1}, \quad \textrm{for $j \geq 1$}.
$$
By standard results, there exists a partition of unity related to the covering $\{U_j\}_{j\in\N}$,
which means that there exists $\{ f_{j} \}_{j\in\N} \in C^{\infty}_{c}(U_j)$ 
such that $0\leq f_j \leq 1$ for every $j \geq 0$ 
and $\sum_{j=0}^\infty f_j = 1$ on $\Omega$. 
We stress that the last property, in particular, implies that 
\begin{equation}\label{partition}
\sum_{j=0}^{\infty} \nabla f_j = 0, \quad\,\, \textrm{on $\Omega$}.
\end{equation}
Recalling the definition of the norm $|\cdot|_1$
given by  \eqref{p-norm},  
and the classical properties of the convolution, we 
easily get that for every $j \geq 0$ there
exists $0<\eps_j<\eps$ such that
\begin{equation}\label{epsj}
\left\{ \begin{array}{l}
           \textrm{supp}\left( ( f_j u)_{\eps_j}\right) \subset U_j,\\ 
           \noalign{\vskip3pt}
					 \displaystyle\int_{\Omega}\left| (f_j u)_{\eps_j} - f_j u \right|_1 dx < \eps \, 2^{-(j+1)},\\
					 \noalign{\vskip6pt}
					 \displaystyle\int_{\Omega}\left| (u \nabla f_j)_{\eps_j} - u \nabla f_j \right|_1 dx < \eps \, 2^{-(j+1)}.
				\end{array}\right.
\end{equation}
We can now define
$v_\eps := \sum_{j=0}^{\infty}(u f_j)_{\eps_j}.$
Since the sum is locally finite, 
we have that $v_{\eps} \in C^{\infty}(\Omega, \C)$, and
that $u = \sum_{j=0}^\infty u f_j$ pointwise.					
Let us start considering the real part of the linear functional 
\[
	 C^{\infty}_c(\Omega)\ni\varphi\mapsto \int_{\Omega}v_{\eps}(x) \dvg \varphi (x) +\i A(x)\cdot \varphi(x)\, v_{\eps}(x)\, dx.
\]
We have
\begin{equation*}
\begin{aligned}
\int_{\Omega}&\Re v_{\eps}(x) \dvg \varphi(x) - A(x)\cdot \varphi(x)\Im v_{\eps}(x)\, dx \\
&= \sum_{j=0}^{\infty}\int_{\Omega}\left( (\Re u f_j)\ast \eta_{\eps_j} \right)(x) \dvg \varphi(x) - \sum_{j=0}^{\infty}\int_{\Omega}A(x)\cdot \varphi(x) \left((\Im u f_j)\ast \eta_{\eps_j}\right)(x)\, dx
=: \mathcal{I} - \mathcal{II}.
\end{aligned}\end{equation*}
Now
\begin{equation*}
\begin{aligned}
\mathcal{I}&= \sum_{j=0}^{\infty}\dfrac{1}{\eps_j^N}\int_{\Omega}\int_{\Omega}\Re u(y)f_j(y) \eta\left(\dfrac{x-y}{\eps_j}\right) \dvg \varphi(x)\, dy dx 
=\sum_{j=0}^{\infty}\int_{\Omega}\Re u (y) f_j(y) \dvg (\varphi \ast \eta_{\eps_j})(y) \, dy \\
&=\sum_{j=0}^{\infty}\int_{\Omega}\Re u(y) \dvg \left( f_j \,(\varphi \ast \eta_{\eps_j})\right)(y) \, dy - \sum_{j=0}^{\infty}\int_{\Omega}\Re u(y) \nabla f_j(y)\cdot (\varphi \ast \eta_{\eps_j})(y) \, dy\\
&=\sum_{j=0}^{\infty}\int_{\Omega}\Re u(y) \dvg \left( f_j \,(\varphi \ast \eta_{\eps_j})\right)(y) \, dy - \sum_{j=0}^{\infty}\int_{\Omega}\left[ \left((\Re u \nabla f_j)\ast \eta_{\eps_j}\right)(y) - \Re u (y) \nabla f_j(y)\right] \cdot \varphi(y)\, dy\\
&=: \mathcal{I}' - \mathcal{I}'',
\end{aligned}\end{equation*}
where in the last equality we used \eqref{partition}.
For $\mathcal{II}$, we have
\begin{equation*}
\begin{aligned}
\mathcal{II}&= \sum_{j=0}^{\infty}\int_{\Omega}A(x)\cdot \varphi(x) \left[ \dfrac{1}{\eps_j^N}\int_{\Omega}\Im u(y)f_j(y)\eta\left(\dfrac{x-y}{\eps_j}\right) \, dy\right] \, dx \\
&= \sum_{j=0}^{\infty}\dfrac{1}{\eps_j^N}\int_{\Omega}\int_\Omega A(y)\cdot \varphi(x)\Im u(y)f_j(y)\eta\left(\dfrac{x-y}{\eps_j}\right) \, dxdy \\
&\quad + \sum_{j=0}^{\infty}\dfrac{1}{\eps_j^N}\int_{\Omega}\int_{\Omega}(A(x)-A(y))\cdot \varphi(x)\Im u(y)f_j(y)\eta\left(\dfrac{x-y}{\eps_j}\right) \, dxdy \\
&= \sum_{j=0}^{\infty}\int_{\Omega}A(y)\cdot \left(f_j (\varphi \ast \eta_{\eps_j}) \right)(y) \Im u (y) \, dy \\
&\quad+ \sum_{j=0}^{\infty}\dfrac{1}{\eps_j^N}\int_{\Omega}\int_{\Omega}(A(x)-A(y))\cdot \varphi(x)\Im u(y)f_j(y)\eta\left(\dfrac{x-y}{\eps_j}\right) dxdy. 
\end{aligned}\end{equation*}
Denoting $f_j (\varphi \ast \eta_{\eps_j}) := \left( f_j (\varphi_{1} \ast \eta_{\eps_j}), \ldots, f_j (\varphi_{n}\ast \eta_{\eps_j})\right),$ 
we note that $|f_j (\varphi \ast \eta_{\eps_j})|\leq 1$ for any $ j\geq 0$,
whenever $\|\varphi\|_{L^\infty(\Omega)}\leq 1$. We also stress that 
$|\mathcal{I}''| < \eps,$
because of \eqref{epsj}.
Therefore, 
\begin{equation}\label{Approx2}
\begin{aligned}
&\Big|\int_{\Omega} \Re v_{\eps}(x) \dvg \varphi(x) - A(x)\cdot \varphi(x)\Im v_{\eps}(x)\, dx \Big| \\
&\leq \left| \sum_{j=0}^{\infty}\int_{\Omega}\Re u(y) \dvg \left( f_j \,(\varphi \ast \eta_{\eps_j})\right)(y) - A(y)\cdot \left(f_j (\varphi \ast \eta_{\eps_j}) \right)(y) \Im u (y) \, dy \right| \\
&+ \sum_{j=0}^{\infty}\left|\dfrac{1}{\eps_j^N}\int_{\Omega}\int_{\Omega}(A(x)-A(y))\cdot \varphi(x)\Im u(y)f_j(y)\eta\left(\dfrac{x-y}{\eps_j}\right)\, dxdy\right| + \eps.
\end{aligned}
\end{equation}
Now, 
$$
\left| \sum_{j=0}^{\infty}\int_{\Omega}\Re u(y) \dvg \left( f_j \,(\varphi \ast \eta_{\eps_j})\right)(y) - A(y)\cdot \left(f_j (\varphi \ast \eta_{\eps_j}) \right)(y) \Im u (y) \, dy \right|
$$ 
can be treated as in \cite[Theorem 2, Section 5.2.2.]{EG}. Indeed,
recalling that by construction every point $x \in \Omega$ belongs to at most
three of the sets $U_j$, we have
\begin{equation*}
\begin{aligned}
\Big| &\sum_{j=0}^{\infty}\int_{\Omega}\Re u(y) \dvg \left( f_j \,(\varphi \ast \eta_{\eps_j})\right)(y) - A(y)\cdot \left(f_j (\varphi \ast \eta_{\eps_j}) \right)(y) \Im u (y) \, dy \Big| \\
&=\Big|\int_{\Omega}\Re u(y) \dvg \left( f_0 \,(\varphi \ast \eta_{\eps_0})\right)(y) - A(y)\cdot \left(f_0 (\varphi \ast \eta_{\eps_0}) \right)(y) \Im u (y) \, dy \\
&+ \sum_{j=1}^{\infty}\int_{\Omega}\Re u(y) \dvg \left( f_j \,(\varphi \ast \eta_{\eps_j})\right)(y) - A(y)\cdot \left(f_j (\varphi \ast \eta_{\eps_j}) \right)(y) \Im u (y) \, dy \Big|\\
&\leq C_{1,A,u}(\Omega) + \sum_{j=1}^{\infty}C_{1,A,u}(U_j) \leq C_{1,A,u}(\Omega) + 3 \, C_{1,A,u}(\Omega \setminus \Omega_0)\\
&\leq C_{1,A,u}(\Omega) + 3 \, \eps,
\end{aligned}\end{equation*}
where the last inequality follows from \eqref{misura}.
It remains to estimate
$$
\sum_{j=0}^{\infty}\left|\dfrac{1}{\eps_j^N}\int_{\Omega}\int_{\Omega}(A(x)-A(y))\cdot \varphi(x)\Im u(y)f_j(y)\eta\left(\dfrac{x-y}{\eps_j}\right) dxdy\right|=: \sum_{j=0}^{\infty}|\mathcal{III}_j|.
$$
Recalling that $A$ is locally Lipschitz, 
$\|\varphi\|_{L^\infty(\Omega)}\leq 1$ and that $\textrm{supp}(\eta) \subset B_{1}(0),$ we have  
\begin{equation*}
\begin{aligned}
\sum_{j=0}^{\infty}|\mathcal{III}_j| &\leq \textrm{Lip}(A,\Omega) \eps \, \int_{\R^N}\eta(z)dz \int_{\Omega}\sum_{j=0}^{\infty}f_j(y)|\Im u (y)| \, dy \\
&= \eps \, \textrm{Lip}(A,\Omega) \, \|\Im (u)\|_{L^1(\Omega)} =: C \, \eps.
\end{aligned}
\end{equation*}
Going back to \eqref{Approx2}, taking the supremum over $\varphi$ and by the
arbitrariness of $\eps >0$ we get precisely \eqref{enough} for the real part.
An analogous argument provides \eqref{enough} also for the imaginary part and 
this concludes the proof.
\end{proof}
\end{lemma}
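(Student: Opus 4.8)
The plan is to transpose the classical Anzellotti--Giaquinta (Meyers--Serrin type) construction, in the form of \cite[Theorem~5.3]{EG}, to the magnetic setting, the only genuinely new point being the behaviour of the zeroth-order terms $\scal{A}{\varphi}\,\Im u$ and $\scal{A}{\varphi}\,\Re u$ under mollification. By the lower semicontinuity of $|Du|_A(\Omega)$ (Lemma~\ref{semic}), which gives $\liminf_k|Du_k|_A(\Omega)\ge|Du|_A(\Omega)$ whenever $u_k\to u$ in $L^1$, it suffices to produce, for every $\eps>0$, one function $v_\eps\in C^\infty(\Omega,\C)$ with $\int_\Omega|u-v_\eps|_1\,dx<\eps$ and $|Dv_\eps|_A(\Omega)<|Du|_A(\Omega)+\eps$; the sequence $u_k:=v_{1/k}$ then does the job.

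To construct $v_\eps$ I would use that $|Du|_A$ extends to a finite Radon measure on $\Omega$ (as established above): fix an exhaustion $\Omega_0\Subset\Omega_1\Subset\cdots$ with $\bigcup_j\Omega_j=\Omega$ and $|Du|_A(\Omega\setminus\Omega_0)<\eps$, set $U_0:=\Omega_0$ and $U_j:=\Omega_{j+1}\setminus\overline{\Omega}_{j-1}$ for $j\ge1$ so that each point of $\Omega$ lies in at most three of the $U_j$, and take a partition of unity $\{f_j\}\subset C^\infty_c(U_j)$ with $\sum_j f_j\equiv1$ on $\Omega$, hence $\sum_j\nabla f_j\equiv0$. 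For each $j$ choose $0<\eps_j<\eps$ small enough that $\supp((f_ju)_{\eps_j})\subset U_j$ and both $(f_ju)_{\eps_j}$ and $(u\nabla f_j)_{\eps_j}$ lie within $\eps\,2^{-(j+1)}$ (in the $|\cdot|_1$-$L^1$ norm) of $f_ju$ and $u\nabla f_j$ respectively, and put $v_\eps:=\sum_j(uf_j)_{\eps_j}$. The sum is locally finite, so $v_\eps\in C^\infty(\Omega,\C)$, and summing the first batch of estimates gives $\int_\Omega|u-v_\eps|_1\,dx<\eps$.

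The heart of the matter is bounding $|Dv_\eps|_A(\Omega)$. For $\varphi\in C^\infty_c(\Omega,\R^N)$ with $\|\varphi\|_{L^\infty(\Omega)}\le1$ I would expand $\int_\Omega\Re v_\eps\,\dvg\varphi-\scal{A}{\varphi}\,\Im v_\eps\,dx$ (the imaginary part being symmetric). In the divergence term, moving the convolution onto $\varphi$ via $\int(g)_{\eps_j}\dvg\varphi=\int g\,\dvg(\varphi\ast\eta_{\eps_j})$ and then using $\dvg(f_j\psi)=f_j\dvg\psi+\scal{\nabla f_j}{\psi}$ together with $\sum_j\nabla f_j=0$, one rewrites it as $\sum_j\int_\Omega\Re u\,\dvg\!\big(f_j(\varphi\ast\eta_{\eps_j})\big)$ plus a remainder bounded by $\eps$. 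In the magnetic term the difficulty is that $\scal{A(x)}{\varphi(x)}$ is evaluated at the outer variable, so I would split $A(x)=A(y)+(A(x)-A(y))$ under the convolution: the $A(y)$-piece recombines, for each $j$, into the functional defining $C_{1,A,u}(U_j)$ tested against $f_j(\varphi\ast\eta_{\eps_j})$, which is admissible since $|f_j(\varphi\ast\eta_{\eps_j})|\le1$, whence by the three-fold overlap one gets at most $C_{1,A,u}(\Omega)+3\,C_{1,A,u}(\Omega\setminus\Omega_0)\le C_{1,A,u}(\Omega)+3\eps$; the leftover $\tfrac1{\eps_j^N}\int\!\int(A(x)-A(y))\cdot\varphi(x)\,\Im u(y)f_j(y)\,\eta\big(\tfrac{x-y}{\eps_j}\big)\,dx\,dy$ is handled by $|A(x)-A(y)|\le\mathrm{Lip}(A,\Omega)\,\eps_j<\mathrm{Lip}(A,\Omega)\,\eps$ on $\supp\eta(\tfrac{x-y}{\eps_j})$, giving a contribution $\le\mathrm{Lip}(A,\Omega)\,\|\Im u\|_{L^1(\Omega)}\,\eps$. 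Taking the supremum over $\varphi$ yields $C_{1,A,v_\eps}(\Omega)\le C_{1,A,u}(\Omega)+C\eps$, and the same computation for the imaginary part gives $C_{2,A,v_\eps}(\Omega)\le C_{2,A,u}(\Omega)+C\eps$; adding these produces $|Dv_\eps|_A(\Omega)\le|Du|_A(\Omega)+C\eps$, which, together with the semicontinuity reduction, completes the proof.

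The main obstacle, and the reason the hypothesis on $A$ is upgraded here from locally bounded to locally Lipschitz, is exactly the magnetic term: unlike $\dvg\varphi$, the coefficient $\scal{A(x)}{\varphi(x)}$ does not pass cleanly onto $\varphi\ast\eta_{\eps_j}$, so one is forced to freeze $A$ at the inner integration variable and absorb the discrepancy $A(x)-A(y)$, which requires a quantitative modulus of continuity for $A$. The remaining steps are a careful but essentially routine adaptation of the Euclidean argument in \cite[Section~5.2.2]{EG}.
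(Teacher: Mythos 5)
Your proposal is correct and follows essentially the same route as the paper's proof: the same reduction via lower semicontinuity, the same exhaustion, partition of unity and mollification parameters $\eps_j$, the same splitting $A(x)=A(y)+(A(x)-A(y))$ to recombine the magnetic term into $C_{1,A,u}(U_j)$ tested against $f_j(\varphi\ast\eta_{\eps_j})$, and the same Lipschitz bound on the discrepancy term. Nothing essential differs from the argument in the paper.
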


\begin{definition}[Extension domains]\label{exdomain}
	Let $A:\R^N\to\R^N$ be a locally bounded function.
Let $\Omega \subset \R^N$ be an open set. We say that $\Omega$ is
an extension domain if its boundary $\partial \Omega$ is bounded
and for any open set $W\supset \overline{\Omega}$, there exists a
linear and continuous extension operator $E: BV_A(\Omega) \to BV_A(\R^N)$
such that
\begin{equation*}
Eu = 0, \quad \textrm{for almost every } x \in \R^N \setminus W, \quad \textrm{and} \quad |DEu|_A(\partial \Omega) = 0,
\end{equation*}
for every $u \in BV_A(\Omega)$.
\end{definition}

\begin{lemma}[Lipschitz extension domains]
	\label{ExtDom-new}
	Let $\Omega \subset \R^N$ be an open bounded set with  
	Lipschitz boundary and $A:\R^N\to\R^N$
	locally Lipschitz. Then $\Omega$ is an extension domain.
	\begin{proof}
Given an arbitrary open set $W\supset \overline{\Omega}$, by virtue of 
\cite[Proposition 3.21]{ABF} there exists a
 linear and continuous extension operator 
 $E_0: BV(\Omega,\R) \to BV(\R^N,\R),$
 such that
 \begin{equation*}
 E_0u = 0, \quad \textrm{for almost every } x \in \R^N \setminus W, \quad \textrm{and} \quad |DE_0u|(\partial \Omega) = 0,
 \end{equation*}
 for all $u \in BV(\Omega)$. Given $u\in BV_A(\Omega)$, we have from Lemma~\ref{Equiv} that 
 $u\in BV(\Omega)$, which means that both $\Re u$ and $\Im u$
 are elements of $BV(\Omega,\R)$. Let us define
 $$
 Eu:=E_0\Re u+\i E_0\Im u,\quad\,\, u\in BV_A(\Omega).
 $$ 
Then $|DE_0\Re u|(\partial \Omega) =|DE_0\Im u|(\partial \Omega) = 0$ 
and there exists a positive constant $C_W$ depending on $W$ and $\Omega$ with
$$
\|E_0\Re u\|_{BV(\R^N)}\leq C_W\|\Re u\|_{BV(\Omega)},\qquad
\|E_0\Im u\|_{BV(\R^N)}\leq C_W\|\Im u\|_{BV(\Omega)}.
$$
 Taking into account Lemma~\ref{Equiv}, we have that
 \begin{align*}
 \|Eu\|_{BV_A(\R^N)}&=C_{1,A,Eu}(\R^N)+C_{2,A,Eu}(\R^N)+\|E_0 \Re u\|_{L^1(\R^N)} +\|E_0 \Im u\|_{L^1(\R^N)}\\
& \leq |D E_0\Re u|(\R^N)+\|A\|_{L^\infty(W)}\|E_0\Im u\|_{L^1(\R^N)} +\|E_0 \Re u\|_{L^1(\R^N)} +\|E_0 \Im u\|_{L^1(\R^N)}\\
&+ |D E_0\Im u|(\R^N)+\|A\|_{L^\infty(W)}\|E_0\Re u\|_{L^1(\R^N)} +\\
&\leq (1+\|A\|_{L^\infty(W)})(\|E_0\Re u\|_{BV(\R^N)}+\|E_0 \Im u\|_{BV(\R^N)}) \\
&\leq (1+\|A\|_{L^\infty(W)})C_W(\|\Re u\|_{BV(\Omega)}+\|\Im u\|_{BV(\Omega)}) \\
& = (1+\|A\|_{L^\infty(W)})C_W\|u\|_{BV(\Omega)} \\
& \leq (1+\|A\|_{L^\infty(W)})C_W K \|u\|_{BV_A(\Omega)}.
 \end{align*}
 Therefore, there exists $C=C(A,\Omega,W)>0$ such that
 $$
 \|Eu\|_{BV_A(\R^N)}\leq C\|u\|_{BV_A(\Omega)},\quad \text{for all $u\in BV_A(\Omega)$.}
 $$
 We have to prove that $|DEu|_A(\partial \Omega) = 0$. We have
 	\begin{align*}
 		|DEu|_A(\partial \Omega):=\inf\{C_{1,A,Eu}(U)\ |\ \partial \Omega\subset U,\ U\, \mbox{open}\}+\inf\{C_{2,A,Eu}(U)\ |\ \partial \Omega\subset U\, \mbox{open}\}.
 	\end{align*}
Then, for arbitrary $U,U',U''$ open with 
$\partial\Omega\subset U\subset U'\subset U''\subset W$, we have
 \begin{align*}
 |DEu|_A(\partial \Omega)\leq |DEu|_A(U) &\leq |DE_0\Re u|(U)+|DE_0\Im u|(U)
+\|A\|_{L^\infty(W)}\|Eu\|_{L^1(U)} \\
&\leq |DE_0\Re u|(U)+|DE_0\Im u|(U')
+\|A\|_{L^\infty(W)}\|Eu\|_{L^1(U'')}.
 \end{align*}
 Taking the infimum over $U$ and recalling that $|DE_0\Re u|(\partial \Omega) = 0$ yields
 $$
 |DEu|_A(\partial \Omega)\leq |DE_0\Im u|(U')
 +\|A\|_{L^\infty(W)}\|Eu\|_{L^1(U'')}.
 $$
 Taking the infimum over $U'$ and recalling that $|DE_0\Im u|(\partial \Omega) = 0$ yields
 $$
 |DEu|_A(\partial \Omega)\leq \|A\|_{L^\infty(W)}\|Eu\|_{L^1(U'')}.
 $$
 Finally, taking as $U''$ a sequence $\{U''_j\}_{j\in \N}$ of open sets such
that $\partial \Omega \subset U''_j \subset W$ and
with ${\mathcal L}^N(U''_j)\to 0$ as $j\to \infty$,
 we conclude that $|DEu|_A(\partial \Omega)=0$.
\end{proof}
\end{lemma}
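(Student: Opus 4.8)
The plan is to reduce everything to the classical, non-magnetic $BV$ extension theorem by treating the real and imaginary parts separately, the bridge being the norm equivalence $BV_A(\Omega)\simeq BV(\Omega)$ of Lemma~\ref{Equiv}. Since $\Omega$ is bounded, $\partial\Omega$ is compact, hence bounded, so the first requirement of Definition~\ref{exdomain} holds for free. Fix an arbitrary open $W\supset\overline{\Omega}$; as we only ever need an extension supported in $W$, we may intersect $W$ with an open ball containing $\overline{\Omega}$ and thus assume $W$ bounded, so that $\|A\|_{L^\infty(W)}<\infty$. (Only local boundedness of $A$ is used here; local Lipschitz continuity is not needed for this lemma and is assumed merely for consistency with results such as Lemma~\ref{Approx}.)

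The key input is the classical extension result for Lipschitz domains, \cite[Proposition 3.21]{ABF}: there is a linear continuous operator $E_0:BV(\Omega,\R)\to BV(\R^N,\R)$ with $E_0v=0$ a.e.\ on $\R^N\setminus W$, with $|DE_0v|(\partial\Omega)=0$, and with $\|E_0v\|_{BV(\R^N)}\le C_W\|v\|_{BV(\Omega)}$. By Lemma~\ref{Equiv} each $u\in BV_A(\Omega)$ lies in $BV(\Omega)$, i.e.\ $\Re u,\Im u\in BV(\Omega,\R)$, so I would put $Eu:=E_0\Re u+\i E_0\Im u$. This is linear and vanishes a.e.\ outside $W$; for continuity, note that $Eu$ is supported in the bounded set $W$, hence — arguing exactly as in the proof of Lemma~\ref{Equiv} — $C_{1,A,Eu}(\R^N)\le|DE_0\Re u|(\R^N)+\|A\|_{L^\infty(W)}\|E_0\Im u\|_{L^1(\R^N)}$ and symmetrically for $C_{2,A,Eu}(\R^N)$. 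Summing these, then using $\|E_0\Re u\|_{BV(\R^N)}\le C_W\|\Re u\|_{BV(\Omega)}$ and $\|E_0\Im u\|_{BV(\R^N)}\le C_W\|\Im u\|_{BV(\Omega)}$, and finally $\|\Re u\|_{BV(\Omega)}+\|\Im u\|_{BV(\Omega)}=\|u\|_{BV(\Omega)}\le K\|u\|_{BV_A(\Omega)}$ from Lemma~\ref{Equiv}, yields $\|Eu\|_{BV_A(\R^N)}\le C(A,\Omega,W)\|u\|_{BV_A(\Omega)}$.

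The hard part will be the nondegeneracy condition $|DEu|_A(\partial\Omega)=0$: on a neighbourhood of $\partial\Omega$ the functional defining $C_{1,A,Eu}$ carries, on top of the genuine $BV$-variation of $E_0\Re u$, a lower-order magnetic contribution built from $A\cdot\varphi\,\Im Eu$, which the classical identity $|DE_0\Im u|(\partial\Omega)=0$ does not control. The remedy is that $\partial\Omega$ is Lebesgue-negligible. Concretely, using the Radon-measure formula for $|DEu|_A(\partial\Omega)$ on $\R^N$ (the infimum of $C_{1,A,Eu}(U)$ plus the infimum of $C_{2,A,Eu}(U)$ over open $U\supset\partial\Omega$), for any such $U\subset W$ one has $C_{1,A,Eu}(U)\le|DE_0\Re u|(U)+\|A\|_{L^\infty(W)}\|Eu\|_{L^1(U)}$ and the analogous bound for $C_{2,A,Eu}(U)$. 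Running this through a nested triple $\partial\Omega\subset U\subset U'\subset U''\subset W$, so that the two $BV$-variation infima can be sent to zero one at a time by means of $|DE_0\Re u|(\partial\Omega)=|DE_0\Im u|(\partial\Omega)=0$, leaves only $\|A\|_{L^\infty(W)}\|Eu\|_{L^1(U'')}$; choosing open $U''_j\supset\partial\Omega$ with $\mathcal L^N(U''_j)\to0$ and invoking absolute continuity of the integral forces this term to $0$ as well. Hence $|DEu|_A(\partial\Omega)=0$, and $\Omega$ is an extension domain in the sense of Definition~\ref{exdomain}. I do not anticipate any obstruction beyond this last piece of measure-theoretic bookkeeping.
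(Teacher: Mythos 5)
Your proposal is correct and follows essentially the same route as the paper's own proof: the same definition $Eu:=E_0\Re u+\i E_0\Im u$ via the classical extension of \cite[Proposition 3.21]{ABF}, the same use of Lemma~\ref{Equiv} for continuity, and the same nested-open-sets argument combined with $\mathcal{L}^N(U''_j)\to 0$ to kill the magnetic term and obtain $|DEu|_A(\partial\Omega)=0$. No gaps.
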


\begin{lemma}[Convolution]\label{conv}
Assume that $A:\R^N\to\R^N$ is locally Lipschitz. 
Suppose $U\subset \R^N$ is an open set with $U\Subset\Omega$ and let $u\in BV_A(\Omega)$. 
Then, for every sufficiently small $\varepsilon>0$, there holds
\[
|Du_{\varepsilon}|_{A}(U)\leq |Du|_{A}(\Omega)+ \varepsilon {\rm Lip}(A,\Omega) \|u\|_{L^1(\Omega)}.
\]
\begin{proof}
Fix $\varphi\in C^1_c(U,\R^N)$ with $\|\varphi\|_{L^\infty(U)}\leq 1$. 
Choose $\delta>0$ such that $\{x\in\R^N |\ d(x,U)<\delta\}\subset \Omega$. 
Then we have $\|\varphi_\eps\|_{L^\infty(\Omega)}\leq 1$ 
and ${\rm supp}(\varphi_{\varepsilon}) \subset \{x\in\R^N |\ d(x, U)<\delta\}$ 
for all small $\varepsilon>0$. Then
\begin{align*}
&\int_{U} \Re u_{\varepsilon}(x) \dvg\,\varphi(x)-A(x)\cdot \varphi(x)\Im u_{\varepsilon}(x) dx \\
&=\int_{\Omega} \left(\Re u\right)_{\varepsilon}(x) \dvg\,\varphi(x)-A(x)\cdot \varphi(x)\left(\Im u\right)_{\varepsilon}(x) dx\\
&=\int_{\Omega} \Re u(x) (\dvg\,\varphi)_{\varepsilon}(x)-\left(A(x)\cdot \varphi(x)\right)_{\varepsilon}\Im u(x) dx\\
&=\int_{\Omega} \Re u(x) \dvg\,\varphi_{\varepsilon}(x)-A(x)\cdot \varphi_{\varepsilon}(x)\Im u(x) dx\\
&-\int_{\Omega}\frac{1}{\varepsilon^N} \int_{\R^N} \eta\left(\frac{x-y}{\varepsilon}\right)(A(y)-A(x))\cdot \varphi(y) dy \Im u(x) dx\\
&\leq \int_{\Omega} \Re u(x) \dvg\,\varphi_{\varepsilon}(x)-A(x)\cdot \varphi_{\varepsilon}(x)\Im u(x) dx\\
&+\int_{\Omega}\frac{1}{\varepsilon^N} \int_{B(x,\varepsilon)} \eta\left(\frac{x-y}{\varepsilon}\right)\left|A(y)-A(x)\right| dy \left|\Im u(x)\right| dx\\
&\leq C_{1,A,u}(\Omega)+\eps {\rm Lip}(A,\Omega) \|u\|_{L^1(\Omega)}.
\end{align*}
Similarly, for every $\varphi\in C^1_c(U,\R^N)$ with $\|\varphi\|_{L^\infty(U)}\leq 1$,  we get
\begin{equation*}
\int_{U} \Im u_{\varepsilon}(x) \dvg\varphi(x)+A(x)\cdot \varphi(x)\Re u_{\varepsilon}(x) dx
\leq C_{2,A,u}(\Omega)+\eps {\rm Lip}(A,\Omega) \|u\|_{L^1(\Omega)}.
\end{equation*}
By the definition of $|Du|_{A}(\Omega)$ and taking the supremum over all $\varphi$ we get the assertion.
\end{proof}\end{lemma}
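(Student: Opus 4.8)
The strategy is the standard transfer-of-mollification (duality) argument, the same one underlying the magnetic Anzellotti--Giaquinta theorem of Lemma~\ref{Approx}: I would bound the two defining functionals $C_{1,A,u_\eps}(U)$ and $C_{2,A,u_\eps}(U)$ by testing against an arbitrary $\varphi\in C^1_c(U,\R^N)$ with $\|\varphi\|_{L^\infty(U)}\le1$ and moving the mollifier off $u_\eps$ onto $\varphi$. Since $U\Subset\Omega$, first fix $\delta>0$ with $\{x\in\R^N\,:\,d(x,U)<\delta\}\subset\Omega$; then for all sufficiently small $\eps>0$ the componentwise mollification $\varphi_\eps$ (as in \eqref{mollif}) is of class $C^\infty$, satisfies $\supp(\varphi_\eps)\subset\{x\,:\,d(x,U)<\delta\}\Subset\Omega$, and $\|\varphi_\eps\|_{L^\infty(\Omega)}\le\|\varphi\|_{L^\infty(U)}\le1$ because $\eta\ge0$ and $\int_{\R^N}\eta=1$. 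Hence $\varphi_\eps$ is an admissible competitor in the definition of both $C_{1,A,u}(\Omega)$ and $C_{2,A,u}(\Omega)$.

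Next, using that $\eta$ is radial (so convolution is self-adjoint and commutes with $\dvg$), that $\Re u_\eps=(\Re u)_\eps$ and $\Im u_\eps=(\Im u)_\eps$, and that $\varphi$ is supported in $U$, a Fubini computation gives
\[
\int_U\Re u_\eps\,\dvg\varphi-A\cdot\varphi\,\Im u_\eps\,dx=\int_\Omega\Re u\,\dvg\varphi_\eps-(A\cdot\varphi)_\eps\,\Im u\,dx .
\]
The decisive point is that $(A\cdot\varphi)_\eps\neq A\cdot\varphi_\eps$ because $A$ depends on the variable; adding and subtracting $A\cdot\varphi_\eps\,\Im u$ produces the commutator
\[
(A\cdot\varphi)_\eps(x)-A(x)\cdot\varphi_\eps(x)=\frac1{\eps^N}\int_{\R^N}\eta\Big(\frac{x-y}{\eps}\Big)\big(A(y)-A(x)\big)\cdot\varphi(y)\,dy .
\]
The leading term $\int_\Omega\Re u\,\dvg\varphi_\eps-A\cdot\varphi_\eps\,\Im u\,dx$ is $\le C_{1,A,u}(\Omega)$ by the admissibility of $\varphi_\eps$ noted above, while the commutator term is controlled by the local Lipschitz continuity of $A$: on $\{|x-y|<\eps\}$ one has $|A(y)-A(x)|\le\eps\,{\rm Lip}(A,\Omega)$, and since $|\varphi|\le1$ and $\eps^{-N}\int\eta((x-y)/\eps)\,dy=1$ the commutator term is $\le\eps\,{\rm Lip}(A,\Omega)\int_\Omega|\Im u|\,dx=\eps\,{\rm Lip}(A,\Omega)\|\Im u\|_{L^1(\Omega)}$. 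Taking the supremum over $\varphi$ yields $C_{1,A,u_\eps}(U)\le C_{1,A,u}(\Omega)+\eps\,{\rm Lip}(A,\Omega)\|\Im u\|_{L^1(\Omega)}$.

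The identical computation applied to the second functional (only the sign in front of $A\cdot\varphi$ changes, which does not affect the commutator estimate) gives $C_{2,A,u_\eps}(U)\le C_{2,A,u}(\Omega)+\eps\,{\rm Lip}(A,\Omega)\|\Re u\|_{L^1(\Omega)}$. Adding the two inequalities, and recalling $|Du_\eps|_A(U)=C_{1,A,u_\eps}(U)+C_{2,A,u_\eps}(U)$, $|Du|_A(\Omega)=C_{1,A,u}(\Omega)+C_{2,A,u}(\Omega)$, and $\|u\|_{L^1(\Omega)}=\|\Re u\|_{L^1(\Omega)}+\|\Im u\|_{L^1(\Omega)}$ by the definition of $|\cdot|_1$, gives exactly the asserted bound. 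I expect the only genuine obstacle to be the commutator $(A\cdot\varphi)_\eps-A\cdot\varphi_\eps$: it is the single place where the hypothesis that $A$ is \emph{locally Lipschitz} (rather than merely locally bounded) is used, and it is the sole source of the $O(\eps)$ error; everything else is the routine transfer-of-mollification manipulation.
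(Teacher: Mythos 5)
Your proposal is correct and follows essentially the same route as the paper's proof: transfer the mollifier onto the test function, use the admissibility of $\varphi_\eps$ in $C_{1,A,u}(\Omega)$ and $C_{2,A,u}(\Omega)$, and control the commutator $(A\cdot\varphi)_\eps-A\cdot\varphi_\eps$ by ${\rm Lip}(A,\Omega)\,\eps$. Your bookkeeping, which keeps $\|\Im u\|_{L^1}$ and $\|\Re u\|_{L^1}$ separate before summing, is in fact slightly tidier than the displayed estimates in the paper and yields exactly the stated constant.
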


\begin{lemma}[Compactness for $BV_A(\Omega)$ functions]\label{compactness}
	Assume that $\Omega \subset \R^N$ is a bounded domain with Lipschitz boundary
	and that $A:\R^N\to\R^N$ is locally bounded. 
	Let $\{u_k\}_{k\in\N}$ be a bounded sequence in $BV_A(\Omega)$. Then,
	up to a subsequence, it converges strongly in $L^1(\Omega)$ to some function $u\in BV_A(\Omega)$.
\end{lemma}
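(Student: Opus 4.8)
The plan is to reduce the statement to the classical Rellich--Kondrachov compactness theorem for $BV$ functions via the norm equivalence of Lemma~\ref{Equiv}. First, since $\Omega$ is bounded and $A$ is locally bounded, $\overline{\Omega}$ is compact and hence $\|A\|_{L^\infty(\Omega)}<\infty$; this is exactly what is needed to invoke Lemma~\ref{Equiv}, which provides a constant $K=K(A,\Omega)>0$ with
$K^{-1}\|v\|_{BV(\Omega)}\le\|v\|_{BV_A(\Omega)}\le K\|v\|_{BV(\Omega)}$ for every $v\in BV_A(\Omega)=BV(\Omega,\mathbb{C})$. Consequently, the sequence $\{u_k\}_{k\in\N}$, being bounded in $BV_A(\Omega)$, is bounded in $BV(\Omega,\mathbb{C})$ as well, and therefore $\{\Re u_k\}_{k\in\N}$ and $\{\Im u_k\}_{k\in\N}$ are bounded sequences in the ordinary space $BV(\Omega,\R)$.

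Next I would apply the classical compactness theorem for $BV$ functions on a bounded Lipschitz domain (see, e.g., \cite[Chapter 3]{ABF} or \cite[Section 5.2]{EG}): from $\{\Re u_k\}_{k\in\N}$ extract a subsequence converging strongly in $L^1(\Omega)$ to some $v\in BV(\Omega,\R)$, and then pass to a further subsequence so that $\{\Im u_k\}_{k\in\N}$ converges strongly in $L^1(\Omega)$ to some $w\in BV(\Omega,\R)$. Setting $u:=v+\i w$, we obtain a (not relabelled) subsequence with $u_k\to u$ strongly in $L^1(\Omega,\mathbb{C})$, and $u\in BV(\Omega,\mathbb{C})$ by construction.

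It then only remains to check that the limit $u$ actually lies in $BV_A(\Omega)$. This can be done in two equivalent ways: either invoke Lemma~\ref{Equiv} once more, since $u\in BV(\Omega)$ on the bounded set $\Omega$ forces $u\in BV_A(\Omega)$; or use the lower semicontinuity of the magnetic total variation (Lemma~\ref{semic}), which, because $u_k\to u$ in $L^1(\Omega)$ and $\sup_k|Du_k|_A(\Omega)<\infty$, yields $|Du|_A(\Omega)\le\liminf_{k\to\infty}|Du_k|_A(\Omega)<\infty$. Either route closes the argument. I do not anticipate a substantive obstacle here: the proof is essentially bookkeeping on top of Lemma~\ref{Equiv} and the classical theory, and the only mild point worth stating explicitly is the global boundedness of $A$ on $\Omega$, which guarantees the applicability of the norm equivalence.
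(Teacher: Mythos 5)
Your proof is correct, but it takes a genuinely different route from the paper. You reduce everything to the classical compactness theorem for $BV$ functions via the norm equivalence of Lemma~\ref{Equiv}: since $\Omega$ is bounded and $A$ is locally bounded, $\|A\|_{L^\infty(\Omega)}<\infty$, so boundedness in $BV_A(\Omega)$ gives boundedness of $\Re u_k$ and $\Im u_k$ in $BV(\Omega,\R)$, and the classical Rellich-type theorem on a bounded Lipschitz domain yields the $L^1$-convergent subsequence; membership of the limit in $BV_A(\Omega)$ then follows either from Lemma~\ref{Equiv} again or from the lower semicontinuity Lemma~\ref{semic}. The paper instead stays inside its own magnetic toolkit: it approximates each $u_k$ by a smooth $v_k$ using the magnetic Anzellotti--Giaquinta Lemma~\ref{Approx} (with $\|u_k-v_k\|_{L^1}<1/k$ and $|Dv_k|_A(\Omega)$ uniformly bounded), uses Lemma~\ref{Bv-lemmino} to identify $|Dv_k|_A(\Omega)$ with $\int_\Omega|\nabla v_k-\i Av_k|_1\,dx$ and hence bound $\{v_k\}$ in $W^{1,1}(\Omega)$, applies the Rellich compact embedding for $W^{1,1}$, transfers the convergence back to $u_k$, and closes with Lemma~\ref{semic}. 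Your argument is shorter and arguably cleaner, at the price of invoking classical $BV$ compactness as a black box (which is itself usually proved by exactly the smoothing-plus-$W^{1,1}$-Rellich scheme the paper reproduces in the magnetic setting); the paper's route is more self-contained and showcases the approximation lemma it has just established, but note that Lemma~\ref{Approx} requires $A$ locally Lipschitz, whereas your route only needs $A$ locally bounded, exactly as in the statement of the lemma. Both arguments use the Lipschitz boundary only for the compactness step, so your proof is complete as written.
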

\begin{proof}
By the approximation Lemma~\ref{approxim}, for any $k\in\N$ there is
$v_k\in BV_A(\Omega)\cap C^\infty(\Omega)$ such that
\begin{equation}
\label{controX}
\int_{\Omega}|u_k-v_k|_1dx<\frac{1}{k},\qquad
\sup_{k\in\N} |Dv_k|_A(\Omega)=C,
\end{equation}
for some $C>0$. In particular, we have
$$
\int_{\Omega}|v_k|_1dx\leq \int_{\Omega}|u_k-v_k|_1dx
+\int_{\Omega}|u_k|_1dx\leq C'+1,\qquad\,\,
C':=\sup_{k\in\N}\|u_k\|_{L^1(\Omega)}.
$$
Now, Lemma~\ref{Bv-lemmino} yields $v_k\in W^{1,1}_A(\Omega)$ and 
$$
\int_{\Omega}|\nabla v_k-\i Av_k|_1dx= |Dv_k|_A(\Omega).
$$
Therefore, we obtain
\begin{align*}
\int_{\Omega}|\nabla v_k|_1dx &\leq \int_{\Omega}|\nabla v_k-\i Av_k|_1dx+C_1\int_{\Omega}|Av_k|_1dx \\
&\leq |Dv_k|_A(\Omega)
+C_1\|A\|_{L^\infty(\overline{\Omega})}\|v_k\|_{L^1(\Omega)}\leq C'',
\end{align*}
for some $C''>0$.  Hence we infer that $\{v_k\}_{k\in\N}$ is a bounded sequence in $W^{1,1}(\Omega)$. Since $\partial\Omega$ is smooth, from Rellich compact embedding theorem there exists a subsequence $\{v_{k_j}\}_{j\in\N}$ of $\{v_k\}_{k\in\N}$ and $w\in L^{1}(\Omega)$ such that $v_{k_j}\to w$ in $L^{1}(\Omega)$. Then
from \eqref{controX} we get $u_{k_j}\to w$ in $L^1(\Omega)$. By the semi-continuity Lemma~\ref{semic} we obtain 
$$
|Dw|_A(\Omega)\leq \liminf_{k_j} |Dv_{k_j}|_A(\Omega)\leq C,
$$
which shows that $w\in BV_A(\Omega)$ and concludes the proof.
\end{proof}

\section{Proof of the main result}\label{main_res}
%

\noindent

%

\noindent
We now state two results that will be proven in the next section.
In the following $Q_{p,N}$ is as in definition \eqref{valoreK}. 

\begin{theorem}[$BV_A$-case]
	\label{Main}
Let $\Omega\subset\R^N$ be an open bounded set with Lipschitz boundary and $A:\R^N\to \R^N$ of class $C^2$. Let $u\in BV_A(\Omega)$ 
and consider a sequence $\{\rho_m\}_{m\in\mathbb{N}}$ of non-negative radial functions with
\begin{equation}
\label{lim-cond}
\lim_{m\to \infty} \int_{0}^{\infty} \rho_m(r) r^{N-1} dr=1,
\end{equation}
and such that, for every $\delta>0$,
\begin{equation}
\label{prorho}
\lim_{m\to \infty}\int_{\delta}^{\infty} \rho_m(r) r^{N-1} dr=0.
\end{equation}
Then, we have
\[
\lim_{m\to \infty}\int_{\Omega}\int_{\Omega} \frac{|u(x)-e^{\i(x-y)\cdot A(\frac{x+y}{2})}u(y)|_1}{|x-y|} \rho_m(x-y) dxdy= Q_{1,N} |D u|_{A}(\Omega).
\]
\end{theorem}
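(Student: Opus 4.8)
The plan is to reduce the magnetic statement to the classical Bourgain--Brezis--Mironescu/D\'avila result for real-valued $BV$ functions by carefully extracting the magnetic phase. First I would split the proof into the usual two inequalities (liminf $\geq$ and limsup $\leq$), and treat the smooth case first, then pass to the general $BV_A$ case by density. The key algebraic observation is that $|u(x)-e^{\i(x-y)\cdot A(\frac{x+y}{2})}u(y)|_1$ can be compared, up to an error of order $|x-y|^2$ (uniformly on the bounded set $\Omega$, since $A\in C^2$), with $|u(x)-u(y) + \i (x-y)\cdot A(\frac{x+y}{2})\, u(y)|_1$; expanding $e^{\i\theta}=1+\i\theta+O(\theta^2)$ with $\theta=(x-y)\cdot A(\frac{x+y}{2})$, and using that $\rho_m(x-y)/|x-y|$ integrated against an extra $|x-y|$ factor contributes a vanishing amount because of \eqref{prorho} together with \eqref{lim-cond}.

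\smallskip
The heart of the matter is then to recognize that, for $u\in W^{1,1}_A(\Omega)$ (or smooth $u$), the difference quotient $\frac{u(x)-u(y)}{|x-y|} + \i\,\frac{(x-y)\cdot A(\frac{x+y}{2})}{|x-y|}\,u(y)$ behaves, as $y\to x$ along the direction $h=\frac{x-y}{|x-y|}$, like $h\cdot(\nabla u(x) - \i A(x) u(x))$ — i.e. the magnetic gradient appears exactly. Concretely I would follow the scheme of D\'avila \cite{davila}: write the double integral in polar coordinates in the variable $z=x-y$, use the mollifier structure of $\rho_m$, and for the limsup estimate invoke a magnetic analogue of the pointwise identity
$$
\lim_{m\to\infty}\int_{\Omega}\int_{\Omega}\frac{|v(x)-v(y)|}{|x-y|}\rho_m(x-y)\,dx\,dy = Q_{1,N}\int_\Omega|\nabla v|\,dx
$$
applied componentwise to $\Re u$ and $\Im u$ after absorbing the phase. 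For the liminf, I would test the inner quantity against the measures $\mu_{1,A,u},\mu_{2,A,u}$ from Lemma~\ref{Struttura}, exploiting lower semicontinuity (Lemma~\ref{semic}) to reduce to smooth approximants provided by Lemma~\ref{approxim} and convolutions controlled by Lemma~\ref{conv}.

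\smallskip
More precisely, for the general case I would argue as follows. For the inequality $\liminf_m (\cdots) \geq Q_{1,N}|Du|_A(\Omega)$: mollify $u$ to $u_\eps$, use Lemma~\ref{conv} to control $|Du_\eps|_A$ on compactly contained subsets, apply the smooth-case result to $u_\eps$ on $U\Subset\Omega$, and let $\eps\to 0$ then $U\nearrow\Omega$ using that $|Du|_A$ is a Radon measure. For the reverse inequality $\limsup_m(\cdots)\leq Q_{1,N}|Du|_A(\Omega)$: take the Anzellotti--Giaquinta-type approximation $u_k\to u$ in $L^1$ with $|Du_k|_A(\Omega)\to|Du|_A(\Omega)$ from Lemma~\ref{approxim}, estimate the difference of the nonlocal functionals evaluated at $u$ and at $u_k$ by the (elementary) triangle-type bound
$$
\Big||u(x)-e^{\i\theta}u(y)|_1 - |u_k(x)-e^{\i\theta}u_k(y)|_1\Big| \leq |u(x)-u_k(x)|_1 + |u(y)-u_k(y)|_1,
$$
whose double integral against $\rho_m(x-y)/|x-y|$ is, by the mollifier normalization, comparable to $\|u-u_k\|_{L^1(\Omega)}$ times a constant depending only on a Poincar\'e-type/extension bound — here is where the Lipschitz-boundary (extension domain) hypothesis enters — and conclude.

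\smallskip
\textbf{Main obstacle.} The delicate point is the uniformity of the phase expansion: one needs $|e^{\i(x-y)\cdot A(\frac{x+y}{2})} - 1 - \i(x-y)\cdot A(\frac{x+y}{2})| \le C|x-y|^2$ with $C$ depending only on $\|A\|_{L^\infty(\overline\Omega)}$, and then to check that the resulting $O(|x-y|^2)$ remainder, when divided by $|x-y|$ and integrated against $\rho_m(x-y)$, vanishes as $m\to\infty$ — this is not automatic from \eqref{lim-cond}--\eqref{prorho} alone for a general sequence $\rho_m$ concentrating at the origin, and must be handled by splitting the domain of integration into $\{|x-y|<\delta\}$ and $\{|x-y|\ge\delta\}$, using \eqref{prorho} on the latter and the smallness of $\delta$ plus the $L^1$ bound on $u$ on the former, exactly as in \cite{davila,Ponce}. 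The second, more bookkeeping-heavy obstacle is keeping the real and imaginary parts correctly coupled through the term $\i A\cdot\varphi\, u$ so that the limit reassembles into $|Du|_A(\Omega) = |\mu_{1,A,u}|(\Omega) + |\mu_{2,A,u}|(\Omega)$ rather than a sum that over- or under-counts; Lemma~\ref{Struttura} is the right tool to make this precise.
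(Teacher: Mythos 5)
Your liminf half (mollify, apply the smooth case, use lower semicontinuity and exhaust $\Omega$) is in the spirit of the paper's argument, but the decisive step of your proposal --- the limsup inequality --- has a genuine gap. You want to pass from the Anzellotti--Giaquinta approximants $u_k$ of Lemma~\ref{approxim} to $u$ by the pointwise triangle bound and then claim that the double integral of $|u(x)-u_k(x)|_1+|u(y)-u_k(y)|_1$ against $\rho_m(x-y)/|x-y|$ is comparable to $\|u-u_k\|_{L^1(\Omega)}$ "by the mollifier normalization". It is not: the normalization \eqref{lim-cond}--\eqref{prorho} controls $\int\rho_m(|z|)\,dz\sim\int_0^\infty\rho_m(r)r^{N-1}dr$, whereas your estimate needs $\int\rho_m(|z|)|z|^{-1}dz=|\mathbb{S}^{N-1}|\int_0^\infty\rho_m(r)r^{N-2}dr$, which is not bounded uniformly in $m$ and in fact diverges for the BBM kernels \eqref{def-rho} with $p=1$ (there $\rho_m(r)r^{N-2}\sim(1-s_m)r^{-1-s_m}$ near $0$). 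So the nonlocal functionals are not $L^1$-continuous uniformly in $m$, and since Lemma~\ref{approxim} only provides strict-type convergence ($L^1$ plus convergence of the variations), not density in the $BV_A$ norm, there is no norm in which a "Lipschitz estimate of the functional" (as in Lemma~\ref{firstLemma} for the $W^{1,p}_A$ case) can rescue the density argument. This is exactly the obstruction that forces D\'avila and Ponce, and the paper, to prove the upper bound differently: one extends $u$ by Lemma~\ref{ExtDom-new} so that $|DEu|_A(\partial\Omega)=0$, mollifies, writes the difference quotient through the fundamental theorem of calculus along the segment $tx+(1-t)y$ with the phase $e^{\i(1-t)(x-y)\cdot A(\frac{x+y}{2})}$, and bounds the functional directly by $Q_{1,N}|D\overline{u}|_A(E_r')\int_0^r\rho_m(s)s^{N-1}ds$ plus terms vanishing as $m\to\infty$ (Lemma~\ref{Ponciarello}, relying on Lemmas~\ref{stima} and \ref{conv}); the conclusion then follows by letting $r\searrow 0$ via outer regularity of the Radon measure and the vanishing of the extended variation on $\partial\Omega$.

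Two smaller points. First, in the liminf direction, mollification does not commute with the magnetic phase, so the comparison between the functional of $u_\eps$ on $\Omega_r$ and that of $u$ on $\Omega$ carries an extra error term involving $e^{\i(x-y)\cdot A(\frac{x+y}{2}+z)}-e^{\i(x-y)\cdot A(\frac{x+y}{2})}$; this is the content of Lemma~\ref{approx} (via the phase estimate of Lemma~\ref{proppsi}), and Lemma~\ref{conv} alone does not supply it. Second, in the smooth case, applying the classical scalar identity "componentwise to $\Re u$ and $\Im u$ after absorbing the phase" would, if done literally, produce $\int_\Omega(|\nabla\Re u|+|\nabla\Im u|)\,dx$ rather than the magnetic quantity $\int_\Omega|\nabla u-\i Au|_1\,dx$, in which real and imaginary parts are coupled through $A$; the correct procedure (Proposition~\ref{form-smooth}) is to Taylor-expand $\varphi(y)=e^{\i(x-y)\cdot A(\frac{x+y}{2})}u(y)$ so that the full magnetic gradient appears before the angular average over $\mathbb{S}^{N-1}$ is taken.
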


\begin{theorem}[$W^{1,p}_A(\Omega)$ case]
	\label{Main2}
	Let $\Omega\subset\R^N$ be an open bounded set with Lipschitz boundary and $A\in C^2(\R^N, \R^N)$. 
	Let $p\geq 1$, $u\in W^{1,p}_A(\Omega)$ 
	and $\{\rho_m\}_{m\in\mathbb{N}}$ as in Theorem~\ref{Main}. Then, we have
	\[
	\lim_{m\to \infty}\int_{\Omega}\int_{\Omega} \frac{|u(x)-e^{\i(x-y)\cdot A(\frac{x+y}{2})}u(y)|_p^p}{|x-y|^p} \rho_m(x-y) dxdy= p\, Q_{p,N} 
	\int_\Omega |\nabla u-\i Au|^p_pdx.
	\]
\end{theorem}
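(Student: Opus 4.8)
The plan is to prove Theorem~\ref{Main2} in three steps: a uniform bound in $m$, the case of a smooth $u$, and a density argument. Throughout write $J_m(u)$ for the double integral on the left-hand side, and note that since $A$ is bounded on $\overline\Omega$ one has $W^{1,p}_A(\Omega)=W^{1,p}(\Omega)$ with equivalent norms. \emph{Step 1 (uniform bound).} I would first prove that there is $C=C(N,p,\Omega,A)$ with $J_m(u)\le C\|u\|_{W^{1,p}_A(\Omega)}^p$ for every $m$ and every $u\in W^{1,p}_A(\Omega)$. Splitting
\[
u(x)-e^{\i(x-y)\cdot A\left(\frac{x+y}{2}\right)}u(y)=\bigl(u(x)-u(y)\bigr)+u(y)\bigl(1-e^{\i(x-y)\cdot A\left(\frac{x+y}{2}\right)}\bigr)
\]
and using the triangle inequality for $|\cdot|_p$ on $\C$ (a norm comparable to the modulus), together with $|1-e^{\i t}|\le|t|$ and $|(x-y)\cdot A(\tfrac{x+y}{2})|\le\|A\|_{L^\infty(\overline\Omega)}|x-y|$, the gauge term contributes at most $C\|A\|_{L^\infty(\overline\Omega)}^p\|u\|_{L^p(\Omega)}^p$, uniformly in $m$ because $\int_{\R^N}\rho_m$ stays bounded by \eqref{lim-cond}; the term $\int_\Omega\int_\Omega\frac{|u(x)-u(y)|_p^p}{|x-y|^p}\rho_m(x-y)\,dx\,dy$ reduces, by the same equivalence applied to $\Re u$ and $\Im u$, to the classical Bourgain--Brezis--Mironescu estimate, which for a real $v$ follows by extending $v$ to $\R^N$ (here the Lipschitz, hence extension, property of $\Omega$ is really used) and combining $|v(x)-v(y)|\le|x-y|\int_0^1|\nabla v(x+t(y-x))|\,dt$ with Jensen's inequality and Fubini. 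I would also record that $u\mapsto J_m(u)^{1/p}$ and $u\mapsto[u]_{W^{1,p}_A(\Omega)}$ are subadditive, since $u(x)-e^{\i(x-y)\cdot A(\frac{x+y}{2})}u(y)$ and $\nabla u-\i Au$ are linear in $u$ and $|\cdot|_p$ satisfies the triangle inequality.

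\emph{Step 2 (smooth $u$).} For $u\in C^\infty(\overline\Omega)$, extend it to a $C^{1,1}$ function on $\R^N$ and set $z(x):=\nabla u(x)-\i A(x)u(x)$. A Taylor expansion at $x$, using $A\in C^1$ and the uniform $C^{1,1}$ bounds on a compact neighbourhood of $\overline\Omega$, gives uniformly
\[
u(x)-e^{\i(x-y)\cdot A\left(\frac{x+y}{2}\right)}u(y)=(x-y)\cdot z(x)+O(|x-y|^2),
\]
hence, since $t\mapsto t^p$ is locally Lipschitz and $|\cdot|_p$ obeys the triangle inequality,
\[
\frac{|u(x)-e^{\i(x-y)\cdot A\left(\frac{x+y}{2}\right)}u(y)|_p^p}{|x-y|^p}=\left|\frac{x-y}{|x-y|}\cdot z(x)\right|_p^p+O(|x-y|).
\]
Multiplying by $\rho_m(x-y)$ and integrating over $\Omega\times\Omega$, the remainder is $\le C\int_\Omega\int_\Omega|x-y|\rho_m(x-y)\,dx\,dy$, which vanishes as $m\to\infty$ because there $|x-y|\le R:=\mathrm{diam}(\Omega)$ and $\int_0^R \rho_m(r)r^N\,dr\le\delta\int_0^R \rho_m(r)r^{N-1}\,dr+R\int_\delta^R \rho_m(r)r^{N-1}\,dr\to0$ by \eqref{lim-cond}, \eqref{prorho}. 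For the main term, substituting $w=x-y$ and using polar coordinates $w=r\omega$ turns it into
\[
\int_\Omega\int_{\mathbb{S}^{N-1}}|\omega\cdot z(x)|_p^p\Bigl(\int_0^\infty{\bf 1}_{x-\Omega}(r\omega)\,\rho_m(r)\,r^{N-1}\,dr\Bigr)\,d\mathcal{H}^{N-1}(\omega)\,dx,
\]
where $x-\Omega=\{x-y:y\in\Omega\}$; for a.e.\ $x\in\Omega$ the inner radial integral lies between $\int_0^{d(x,\partial\Omega)}\rho_m(r)r^{N-1}\,dr$ and $\int_0^\infty\rho_m(r)r^{N-1}\,dr$, so it tends to $1$, and being uniformly bounded, dominated convergence (dominating function $C|z(x)|_p^p\in L^1(\Omega)$) gives the limit $\int_\Omega\int_{\mathbb{S}^{N-1}}|\omega\cdot z(x)|_p^p\,d\mathcal{H}^{N-1}(\omega)\,dx$. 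Finally, writing $\omega\cdot z=\omega\cdot\Re z+\i\,\omega\cdot\Im z$ and using \eqref{p-norm}, the rotational invariance of $\mathcal{H}^{N-1}$ on $\mathbb{S}^{N-1}$ and \eqref{valoreK}, one computes $\int_{\mathbb{S}^{N-1}}|\omega\cdot z|_p^p\,d\mathcal{H}^{N-1}(\omega)=pQ_{p,N}(|\Re z|^p+|\Im z|^p)=pQ_{p,N}|z|_p^p$, so that $\lim_{m\to\infty} J_m(u)=pQ_{p,N}\int_\Omega|\nabla u-\i Au|_p^p\,dx$.

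\emph{Step 3 (density) and conclusion.} Smooth functions being dense in $W^{1,p}(\Omega)=W^{1,p}_A(\Omega)$, pick $u_k\in C^\infty(\overline\Omega)$ with $u_k\to u$ in $W^{1,p}_A(\Omega)$. By Step 1, $|J_m(u)^{1/p}-J_m(u_k)^{1/p}|\le J_m(u-u_k)^{1/p}\le C^{1/p}\|u-u_k\|_{W^{1,p}_A(\Omega)}$ uniformly in $m$, and $|[u]_{W^{1,p}_A(\Omega)}-[u_k]_{W^{1,p}_A(\Omega)}|\le[u-u_k]_{W^{1,p}_A(\Omega)}\to0$; combining these with Step 2 applied to each $u_k$ and letting $m\to\infty$ and then $k\to\infty$ yields the theorem. \textbf{Main obstacle.} The genuinely technical point is the uniform bound of Step 1: this is where the Lipschitz/extension hypothesis on $\Omega$ is really needed, and some care is required because $|\cdot|_p$ is not multiplicative on $\C$, only comparable to the modulus. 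The extension of $u$ in Step 2 is what disposes of the non-convexity of $\Omega$ in the Taylor expansion; once these are in place, the polar-coordinate and dominated-convergence arguments are routine.
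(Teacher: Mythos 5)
Your proposal is correct, and its overall architecture is the same as the paper's: a uniform-in-$m$ bound making $u\mapsto J_m(u)^{1/p}$ Lipschitz with respect to $\|\cdot\|_{W^{1,p}_A(\Omega)}$ (the paper's Lemma~\ref{firstLemma}), an explicit computation for smooth $u$ via Taylor expansion and concentration of $\rho_m$ (Proposition~\ref{form-smooth} together with Lemmas~\ref{propnorm} and \ref{rem}), and the same density argument to conclude. The genuine differences are in the two technical ingredients. For the uniform bound the paper proves a gauge-covariant translation estimate (Lemma~\ref{stima1}): writing $u(y+h)-e^{\i h\cdot A(y+\frac h2)}u(y)=\int_0^1\varphi'(t)\,dt$ with $\varphi(t)=e^{\i(1-t)h\cdot A(y+\frac h2)}u(y+th)$, so that $\nabla u-\i Au$ appears directly, and then uses the magnetic extension of \cite[Lemma 2.2]{BM}; you instead split $u(x)-e^{\i\theta}u(y)=(u(x)-u(y))+u(y)\bigl(1-e^{\i\theta}\bigr)$, control the gauge piece by $\|A\|_\infty\|u\|_{L^p}$ (note that the midpoint $\frac{x+y}{2}$ lies in the convex hull of $\Omega$, not necessarily in $\overline\Omega$, so the supremum of $|A|$ should be taken there -- harmless since $A\in C^2(\R^N)$ and $\Omega$ is bounded), and reduce the remaining piece to the classical non-magnetic BBM bound via an ordinary $W^{1,p}$ extension; your route is more elementary and exploits $W^{1,p}_A(\Omega)=W^{1,p}(\Omega)$, while the paper's stays intrinsically magnetic and reuses the machinery of \cite{BM}. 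In the smooth case the paper never extends $u$: it Taylor-expands only inside $B(x,R_x)\subset\Omega$, bounds the integrand crudely outside, and then passes through Lemma~\ref{rem} plus a boundary-layer argument with $\Omega_\lambda$ to replace the $y$-integration by an integral over all of $\R^N$; you instead extend $u$ to a $C^{1,1}$ function on $\R^N$ (legitimate on a Lipschitz domain, e.g.\ by Stein extension in $W^{2,\infty}$, though you should say so) and compute the main term directly in polar coordinates over $x-\Omega$, squeezing the radial integral between $\int_0^{d(x,\partial\Omega)}$ and $\int_0^\infty$. Both treatments are sound; yours is somewhat shorter at the price of the (standard but unproved) $C^{1,1}$ extension and of leaning on the non-magnetic theory, whereas the paper's argument works entirely inside $\Omega$ and in the magnetic framework.
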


\begin{remark}\rm
	\label{remconv}
In the notation of Theorem~\ref{Main}, assuming \eqref{lim-cond} and \eqref{prorho}
automatically implies that
\begin{equation*}
\lim_{m\to \infty} \int_{0}^{\delta} \rho_m(r) r^{N-1+\beta} dr=0,\quad\text{for every $\beta>0$ and for every $\delta>0$}.
\end{equation*}
In fact, fixed $\delta>0$, taking an arbitrary $0<\tau<\delta$, we have
\begin{align*}
 \int_{0}^{\delta}\rho_{m}(r)r^{N-1+\beta}dr &=\int_{0}^{\tau}\rho_{m}(r)r^{N-1+\beta}dr+\int_{\tau}^{\delta}\rho_{m}(r)r^{N-1+\beta}dr \\
 &\leq
\tau^\beta\int_{0}^{\tau}\rho_{m}(r)r^{N-1}dr+\delta^\beta\int_{\tau}^{\infty}\rho_{m}(r)r^{N-1}dr
\leq C\tau^\beta+\delta^\beta\int_{\tau}^{\infty}\rho_{m}(r)r^{N-1}dr,
\end{align*}
from which the assertion follows by letting $m\to \infty$ first, using \eqref{prorho}, 
and finally letting $\tau\searrow 0$.
\end{remark}

\vskip5pt
\noindent
$\bullet$ {\bf Proof of the main result (Theorem~\ref{main}) completed.}
Let $r_\Omega$ denote the diameter of $\Omega$. Then we consider a function 
$\psi\in C^\infty_c(\R^N)$, $\psi(x)=\psi_0(|x|)$ with $\psi_0(t)=1$ for $t<r_\Omega$ 
and $\psi_0(t)=0$ for $t>2r_\Omega$. Then $\psi_0(|x-y|)=1,$ for every $x,y\in\Omega$.
Let $\{s_m\}_{m\in\N}\subset (0,1)$ with $s_m\nearrow 1$. 
For a $p\geq 1$ consider the sequence of radial functions in $L^1(\R^N)$
\begin{equation}
\label{def-rho}
\rho_m(|x|):=\frac{p(1-s_m)}{|x|^{N+ps_m-p}}\psi_0(|x|),
\,\,\quad x\in\R^N,\,\, m\in\N.
\end{equation}
Notice that both conditions \eqref{lim-cond} and \eqref{prorho} hold, since
\begin{equation*}
\lim_{m\to \infty}\int_{0}^{r_\Omega}\rho_m(r)r^{N-1}dr
=\lim_{m\to \infty}p(1-s_m)\int_{0}^{r_\Omega} r^{-ps_m+p-1}dr=\lim_{m\to \infty} r_\Omega^{p(1-s_m)}=1,
\end{equation*}
and
$$
\lim_{m\to \infty}\int_{r_\Omega}^{2 r_\Omega}\rho_m(r) r^{N-1}dr
=\lim_{m\to \infty}p(1-s_m)\int_{r_\Omega}^{2r_\Omega}\frac{\psi_0(r)}{r^{ps_m+1-p}}dr
\leq C\lim_{m\to \infty} 1-s_m=0.
$$
In a similar fashion, for any $\delta>0$, there holds
\begin{equation*}
\lim_{m\to \infty}\int_{\delta}^{\infty}\rho_{m}(r)r^{N-1}dr
\leq C\, \lim_{m\to \infty} p(1-s_m)\int_{\delta}^{2r_\Omega}\frac{1}{t^{ps_m+1-p}}dt=0.
\end{equation*}
Then Theorem~\ref{main} follows directly from Theorems~\ref{Main} and \ref{Main2} using $\rho_m$ as in \eqref{def-rho}. 
\qed

We first need the following

\begin{lemma}\label{rem}
Let $p\geq 1$. 
Then, for every $v\in\C^N$ it holds
\begin{align}\label{complex}
\lim_{m\to \infty}\int_{\R^N}\left|v\cdot \frac{h}{|h|}\right|^p_p\rho_m(h) dh=pQ_{p,N} |v|^p_p.
\end{align}
\end{lemma}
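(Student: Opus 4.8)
The plan is to reduce the computation to the classical (non-magnetic) BBM constant computation. Write $v = a + \iu b$ with $a = (\Re v_1,\dots,\Re v_N)$ and $b = (\Im v_1,\dots,\Im v_N)$ in $\R^N$. For $h \in \R^N$ we have $v \cdot \tfrac{h}{|h|} = a\cdot\tfrac{h}{|h|} + \iu\, b\cdot\tfrac{h}{|h|}$, both summands being real, so by the definition \eqref{p-norm} of $|\cdot|_p$,
$$
\left| v \cdot \frac{h}{|h|} \right|_p^p = \left| a\cdot \frac{h}{|h|} \right|^p + \left| b\cdot \frac{h}{|h|} \right|^p.
$$
Thus it suffices to prove, for an arbitrary fixed $w\in\R^N$, that
$$
\lim_{m\to\infty} \int_{\R^N} \left| w \cdot \frac{h}{|h|} \right|^p \rho_m(h)\, dh = p\, Q_{p,N}\, |w|^p,
$$
and then sum the two contributions coming from $w=a$ and $w=b$, recalling that $|v|_p^p = |a|^p + |b|^p$.

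For the scalar identity, first handle $w = 0$ trivially, then for $w\neq 0$ use homogeneity: the integrand $|w\cdot\tfrac{h}{|h|}|^p$ is $0$-homogeneous in $h$, so passing to polar coordinates $h = r\omega$, $\omega\in\mathbb{S}^{N-1}$, gives
$$
\int_{\R^N} \left| w\cdot\frac{h}{|h|}\right|^p \rho_m(h)\,dh = \left( \int_{\mathbb{S}^{N-1}} |w\cdot\omega|^p\, d\mathcal{H}^{N-1}(\omega) \right) \left( \int_0^\infty \rho_m(r)\, r^{N-1}\, dr \right).
$$
By \eqref{lim-cond} the radial integral tends to $1$, and writing $w = |w|\,\boldsymbol\omega$ for a unit vector $\boldsymbol\omega$, the angular integral equals $|w|^p \int_{\mathbb{S}^{N-1}}|\boldsymbol\omega\cdot\omega|^p\,d\mathcal{H}^{N-1}(\omega) = |w|^p \cdot p\, Q_{p,N}$ by the very definition \eqref{valoreK} of $Q_{p,N}$ (the rotational invariance of $\mathcal{H}^{N-1}$ on the sphere makes the integral independent of which unit vector is chosen). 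Taking the limit $m\to\infty$ finishes the scalar case, and summing over $w=a,b$ gives \eqref{complex}.

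I do not expect any genuine obstacle here: the only point requiring a word of care is the interchange of the limit with the integral, which is immediate since $\rho_m \geq 0$ and the factorization above is exact (no estimate is even needed — the $h$-integral splits as a product of a fixed angular integral and the radial integral appearing in \eqref{lim-cond}). One should also note that $\rho_m \in L^1(\R^N)$ and $|w\cdot\tfrac{h}{|h|}|^p \leq |w|^p$ is bounded, so all integrals are finite and the polar-coordinate computation is legitimate.
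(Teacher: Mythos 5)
Your proof is correct and follows essentially the same route as the paper's: split $|v\cdot h/|h||_p^p$ into the real and imaginary contributions using \eqref{p-norm}, then for a real vector factor the integral in polar coordinates into the radial integral from \eqref{lim-cond} times the spherical integral, which equals $pQ_{p,N}|w|^p$ by \eqref{valoreK} and rotational invariance. No gaps; the observation that no limit/integral interchange is needed because the factorization is exact matches the paper's argument.
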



\begin{proof}
First of all we observe that, due to symmetry reasons, $Q_{p,N}$ is
{\em independent} of the choice of the direction $\boldsymbol\omega\in {\mathbb S}^{N-1}$. 	
We prove that \eqref{complex} easily follows assuming \eqref{complex} with $v\in\R^N$. 
Let $v=(v_1,\ldots, v_N)\in\C^N$ and $h=(h_1,\ldots, h_N)\in\R^N$. Then
\begin{align}\label{prim}
\left|v\cdot \frac{h}{|h|}\right|^p_p &=\left|\sum_{j=1}^N v_j \frac{h_j}{|h|}\right|^p_p=\left| \sum_{j=1}^N \Re v_j \frac{h_j}{|h|}+\i\sum_{j=1}^N \Im v_j \frac{h_j}{|h|}\right|^p_p\\
\nonumber
&=\left|\sum_{j=1}^N \Re v_j \frac{h_j}{|h|}\right|^p+\left|\sum_{j=1}^N \Im v_j \frac{h_j}{|h|}\right|^p=\left| \Re v\cdot \frac{h}{|h|}\right|^p+ \left| \Im v\cdot \frac{h}{|h|}\right|^p,
\end{align}
where we denoted by $\Re v=(\Re v_1,\ldots, \Re v_N)$ and $\Im v=(\Im v_1,\ldots, \Im v_N)$. Using \eqref{prim} we get
\begin{align*}
\lim_{m\to \infty}\int_{\R^N}\left|v\cdot \frac{h}{|h|}\right|^p_p\rho_m(h) dh &=\lim_{m\to \infty}\int_{\R^N}\left| \Re v\cdot \frac{h}{|h|}\right|^p\rho_m(h) dh+ \lim_{m\to \infty}\int_{\R^N}\left| \Im v\cdot \frac{h}{|h|}\right|^p\rho_m(h) dh\\
&=pQ_{p,N}\left( \left|\Re v\right|^p+ \left|\Im v\right|^p\right)=pQ_{p,N} |v|^p_p.
\end{align*}
In order to prove \eqref{complex} with $v\in\R^N$, we apply co-area formula, a change of variable and \eqref{lim-cond}, getting
\begin{align*}
\lim_{m\to \infty}&\int_{\R^N}\left|v\cdot \frac{h}{|h|}\right|^p\rho_m(h) dh=\lim_{m\to \infty}\int_0^{\infty}\int_{\{|h|=R\}}\left|v\cdot \frac{h}{|h|}\right|^p\rho_m(h) d\mathcal{H}^{N-1}(h) dR\\
\nonumber
&=\lim_{m\to \infty}\int_0^{\infty}\rho_m(R) R^{N-1}dR\int_{\mathbb{S}^{N-1}}\left|v\cdot  h\right|^pd\mathcal{H}^{N-1}(h) \\
\nonumber
&=|v|^p\int_{\mathbb{S}^{N-1}}\left|\frac{v}{|v|}\cdot h\right|^p d\mathcal{H}^{N-1}(h)
=|v|^p\int_{\mathbb{S}^{N-1}}\left|\boldsymbol\omega\cdot h\right|^p d\mathcal{H}^{N-1}(h)=pQ_{p,N} |v|^p,
\end{align*}
for an arbitrarily fixed $\boldsymbol\omega\in {\mathbb S}^{N-1}$. This concludes the proof.
\end{proof}


\noindent
Let now  $\{\rho_m\}_{m\in\mathbb{N}}$ be as in Theorem \ref{Main}. 
The following is the main result for smooth functions.



\begin{proposition}[Smooth case]\label{form-smooth}
	Let $\Omega\subset\R^N$ be a bounded set and $A\in C^2(\R^N,\R^N)$. Then 
	\[
	\lim_{m\to \infty}\int_{\Omega}\int_{\Omega} \frac{|u(x)-e^{\i(x-y)\cdot A(\frac{x+y}{2})}u(y)|^p_p}{|x-y|^p}\rho_m(x-y) dxdy= 
	pQ_{p,N}\int_{\Omega} |\nabla u-\i Au|_p^pdx,
	\]
	for every $u\in C^2(\bar\Omega,\mathbb{C})$ and for every $p\geq 1$. In particular, if $p=1$ then
	\begin{align}\label{Pociarellosmooth}
	\lim_{m\to \infty}\int_{\Omega}\int_{\Omega} \frac{|u(x)-e^{\i(x-y)\cdot A(\frac{x+y}{2})}u(y)|_1}{|x-y|}\rho_m(x-y) dxdy= 
	Q_{1,N}|Du|_A(\Omega).
	\end{align}
\end{proposition}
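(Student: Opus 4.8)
The plan is to reduce everything to the pointwise limit identity in Lemma~\ref{rem} via a change of variables and a Taylor expansion of the magnetic difference quotient, exploiting the smoothness of $u$ and $A$ to control the error terms uniformly. Concretely, I would write $h=x-y$ and rewrite the double integral as $\int_\Omega \int_{\Omega-x} F_m(x,h)\,dh\,dx$ where
\[
F_m(x,h)=\frac{|u(x)-e^{\i h\cdot A(x-h/2)}u(x-h)|_p^p}{|h|^p}\rho_m(h).
\]
The first step is a Taylor expansion: since $u\in C^2(\bar\Omega,\C)$ and $A\in C^2$, one has
\[
u(x)-e^{\i h\cdot A(\frac{x+y}{2})}u(y)= \bigl(\nabla u(x)-\i A(x)u(x)\bigr)\cdot h + O(|h|^2),
\]
where the $O(|h|^2)$ is uniform in $x\in\bar\Omega$ (this is exactly the computation behind the fact that $(-\Delta)^s_A$ localizes to the magnetic gradient). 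Dividing by $|h|$ gives
\[
\frac{u(x)-e^{\i h\cdot A(\frac{x+y}{2})}u(y)}{|h|}= \bigl(\nabla u(x)-\i A(x)u(x)\bigr)\cdot \frac{h}{|h|}+O(|h|),
\]
so that $|\,\cdot\,|_p^p$ of the left side equals $\bigl|(\nabla u(x)-\i A(x)u(x))\cdot \frac{h}{|h|}\bigr|_p^p$ plus an error that is $O(|h|)$ uniformly (using that $p\mapsto |w+z|_p^p-|w|_p^p$ is controlled by $|z|(|w|+|z|)^{p-1}$ for the $|\cdot|_p$ norm).

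The second step is to insert this expansion and pass to the limit $m\to\infty$ using the concentration hypotheses \eqref{lim-cond} and \eqref{prorho}. For the main term, for each fixed $x\in\Omega$ the inner integral $\int_{\Omega-x}|v(x)\cdot\frac{h}{|h|}|_p^p\,|h|^{-p}\cdot|h|^p\rho_m(h)$ — with $v(x)=\nabla u(x)-\i A(x)u(x)$ — converges to $pQ_{p,N}|v(x)|_p^p$ by Lemma~\ref{rem}, after checking that replacing $\R^N$ by $\Omega-x$ in the domain of integration produces a vanishing error: this uses \eqref{prorho}, since the discarded region $\{|h|\geq \delta\}$ contributes $O(\int_\delta^\infty \rho_m(r)r^{N-1}dr)\to 0$, and $\mathrm{dist}(x,\partial\Omega)$ can be small only on a set of $x$ of small measure. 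For the error term, the $O(|h|)$ bound multiplied against $\rho_m(h)$ and integrated gives a contribution of order $\int_0^\delta \rho_m(r)r^{N-1+1}dr + O(\int_\delta^\infty\rho_m r^{N-1})$, which tends to $0$ by Remark~\ref{remconv} (with $\beta=1$) together with \eqref{prorho}. Then a dominated-convergence argument in the outer $x$-integral — the inner integrals being uniformly bounded by $C\,\sup(\int_0^\infty \rho_m r^{N-1}dr)\,\|v\|_\infty^p<\infty$ thanks to \eqref{lim-cond} — yields
\[
\lim_{m\to\infty}\int_\Omega\int_\Omega \frac{|u(x)-e^{\i(x-y)\cdot A(\frac{x+y}{2})}u(y)|_p^p}{|x-y|^p}\rho_m(x-y)\,dxdy = pQ_{p,N}\int_\Omega |\nabla u-\i Au|_p^p\,dx.
\]

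The third step is just the specialization $p=1$: then $pQ_{p,N}=Q_{1,N}$, and by Lemma~\ref{Bv-lemmino} (the $BV_A$ norm on $W^{1,1}_A$, applied to the smooth function $u\in C^2(\bar\Omega)\subset W^{1,1}_A(\Omega)$) one has $\int_\Omega|\nabla u-\i Au|_1\,dx = |Du|_A(\Omega)$, which gives \eqref{Pociarellosmooth}.

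The main obstacle I anticipate is making the Taylor expansion error genuinely uniform and integrable against the singular kernels $\rho_m$: one must be careful that the $O(|h|^2)$ term in the magnetic difference really is $O(|h|^2)$ and not $O(|h|)$ — this is where the specific choice of the midpoint $A(\frac{x+y}{2})$ and the $C^2$ regularity of both $u$ and $A$ enter (a less symmetric choice would leave an $O(|h|)$ term and the limit would differ). A secondary technical point is handling the domain-truncation error near $\partial\Omega$, i.e., quantifying that the difference between integrating $h$ over $\Omega-x$ versus over $\R^N$ is controlled uniformly after integrating in $x$; this is routine for bounded $\Omega$ but requires splitting $\Omega$ into a boundary layer of small measure and its complement.
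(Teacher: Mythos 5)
Your argument is essentially the paper's own proof: you Taylor-expand the gauged function $y\mapsto e^{\i(x-y)\cdot A(\frac{x+y}{2})}u(y)$ to replace the difference quotient by $(\nabla u(x)-\i A(x)u(x))\cdot\frac{x-y}{|x-y|}$ up to an $\mathcal{O}(|x-y|)$ error (the paper's Lemma~\ref{propnorm}$(ii)$), kill the error via Remark~\ref{remconv} and \eqref{prorho}, identify the main term through Lemma~\ref{rem} after controlling the truncation of the $h$-domain from $\R^N$ to $\Omega$ by a boundary-layer splitting, conclude by dominated convergence, and invoke Lemma~\ref{Bv-lemmino} for the case $p=1$; this matches the paper step for step (the paper just organizes the error as a quantity $\Psi_m(x)$ split over $B(x,R_x)$ and $\Omega\setminus B(x,R_x)$, which also disposes of the points $y$ for which the segment to $x$ leaves $\Omega$). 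Your closing parenthetical is the only inaccuracy: the first-order expansion yields the same magnetic gradient for other natural evaluation points of $A$ (e.g.\ $A(x)$), so the midpoint is not what saves the limit here, but this side remark does not affect the proof.
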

\begin{proof}
Let $p\geq 1$. If we set $\varphi(y):=e^{\i(x-y)\cdot A(\frac{x+y}{2})} u(y)$, since
\[
\nabla_y\varphi(y)=e^{\i(x-y)\cdot A(\frac{x+y}{2})}
\Big(\nabla_y u(y)-\i A\Big(\frac{x+y}{2}\Big)u(y)+\frac{\i}{2}u(y)(x-y)\cdot \nabla_y A\Big(\frac{x+y}{2}\Big)\Big),
\]
if $x,y\in\Omega$, since 
$u,A\in C^2(\bar{\Omega})$, by Taylor's formula we get (for $y\in B(x,\rho)\subset\Omega$)
\[
\frac{u(x)-e^{\i(x-y)\cdot A(\frac{x+y}{2})} u(y)}{|x-y|}=\frac{\varphi(x)-\varphi(y)}{|x-y|}
=(\nabla u(x)-\i A(x)u(x))\cdot \frac{x-y}{|x-y|}+{\mathcal O}(|x-y|).
\]
Then, taking into account $(ii)$ of Lemma~\ref{propnorm} below, applied with 
$T(x):=\nabla u(x)-\i A(x)u(x)$
we get
\[
\Big|\frac{u(x)-e^{\i(x-y)\cdot A(\frac{x+y}{2})} u(y)}{|x-y|}\Big|_p^p=\Big|(\nabla u(x)-\i A(x)u(x))\cdot \frac{x-y}{|x-y|}\Big|_p^p+{\mathcal O}(|x-y|).
\]
For $x\in\Omega$, if we set $R_x={\rm dist}(x,\partial\Omega)$, then we get for some positive constant $C$
\begin{align*}
\Psi_m(x)&:=\int_{\Omega} \Big|\frac{|u(x)-e^{\i(x-y)\cdot A(\frac{x+y}{2})}u(y)|_p^p-|(\nabla u(x)-\i A(x)u(x))\cdot (x-y)|_p^p}{|x-y|^p}\rho_{m}(x-y) \Big|dy \\
&=\int_{B(x,R_x)} \Big|
\Big|\frac{u(x)-e^{\i(x-y)\cdot A(\frac{x+y}{2})} u(y)}{|x-y|}\Big|_p^p-\Big|(\nabla u(x)-\i A(x)u(x))\cdot \frac{x-y}{|x-y|}\Big|_p^p
\Big|\rho_{m}(x-y) dy\\
&+\int_{\Omega\setminus B(x,R_x)} 
\Big|
\Big|\frac{u(x)-e^{\i(x-y)\cdot A(\frac{x+y}{2})} u(y)}{|x-y|}\Big|_p^p-\Big|(\nabla u(x)-\i A(x)u(x))\cdot \frac{x-y}{|x-y|}\Big|_p^p
\Big|
\rho_{m}(x-y) dy\\
&\leq C\int_{B(x,R_x)} |x-y|\rho_m(x-y) dy+ C\int_{\Omega\setminus B(x,R_x)}\rho_m(x-y) dy \\
&\leq C\int_0^{R_x} \rho_m(r)r^N dr+ C\int_{R_x}^{\infty}\rho_m(r)r^{N-1}dr,
\end{align*}
where to handle the second integral we used that
$$
\Big|
\Big|\frac{u(x)-e^{\i(x-y)\cdot A(\frac{x+y}{2})} u(y)}{|x-y|}\Big|_p^p-\Big|(\nabla u(x)-\i A(x)u(x))\cdot \frac{x-y}{|x-y|}\Big|_p^p
\Big|\leq C,\quad \text{for all $x,y\in\Omega$}.
$$
Letting $m\to \infty$ and recalling \eqref{prorho} and Remark~\ref{remconv} we get $\Psi_m(x)\to 0$ for every $x\in\Omega$. Since 
$$
|\Psi_m(x)|\leq C\int_{\Omega} \rho_m(x-y) dy\leq C\int_0^{\infty} \rho_m(r)r^{N-1}dr\leq C,
$$
the Dominated Convergence Theorem yields $\Psi_m\to 0$ in $L^1(\Omega)$ as $m\to \infty$. Then, to get the assertion, it is sufficient to prove that
\begin{align*}
\lim_{m\to \infty}\int_{\Omega}\int_{\Omega}\frac{|(\nabla u(x)-\i A(x)u(x))\cdot (x-y)|_p^p}{|x-y|^p}\rho_{m}(x-y) dydx= pQ_{p,N}\int_{\Omega} |\nabla u-\i Au|_p^pdx.
\end{align*}
Fixed $x\in\Omega$, by virtue of formula \eqref{complex}, we can write
\begin{align*}
 pQ_{p,N}\left|\nabla u(x)-\i A(x)u(x)\right|_p^p&=
\lim_{m\to \infty}\int_{\R^N}\Big|(\nabla u(x)-\i A(x)u(x))\cdot \frac{h}{|h|}\Big|_p^p\rho_{m}(h) dh\\
\nonumber
&=\lim_{m\to \infty}\int_{\Omega}\Big|(\nabla u(x)-\i A(x)u(x))\cdot\frac{x-y}{|x-y|}\Big|_p^p\rho_{m}(x-y) dy\\
\nonumber
&+\lim_{m\to \infty}\int_{\R^N\setminus\Omega}\Big|(\nabla u(x)-\i A(x)u(x))\cdot\frac{x-y}{|x-y|}\Big|_p^p\rho_{m}(x-y) dy.
\end{align*}
To conclude the proof it suffices to prove that
\begin{equation*}
\lim_{m\to \infty}\int_{\Omega}\int_{\R^N\setminus\Omega}\Big|(\nabla u(x)-\i A(x)u(x))\cdot\frac{x-y}{|x-y|}\Big|_p^p\rho_{m}(x-y) dydx=0.
\end{equation*}
For every $\lambda>0$, we denote 
\[
\Omega_{\lambda}:=\{x\in\Omega\ |\ {\rm dist}(x,\partial\Omega)>\lambda\},
\]
and $M:=\| \nabla u-\i Au \|_{L^{\infty}(\Omega)}^p$. 
Then we obtain
\begin{align*}
&\int_{\Omega}\int_{\R^N\setminus\Omega}\Big|(\nabla u(x)-\i A(x)u(x))\cdot\frac{x-y}{|x-y|}\Big|_p^p\rho_{m}(x-y) dydx\\
&=\int_{\Omega}\int_{(\R^N\setminus\Omega)\cap B(x,\lambda)}\Big|(\nabla u(x)-\i A(x)u(x))\cdot\frac{x-y}{|x-y|}\Big|_p^p\rho_{m}(x-y) dydx\\
&+\int_{\Omega}\int_{(\R^N\setminus\Omega)\cap B(x,\lambda)^c}\Big|(\nabla u(x)-\i A(x)u(x))\cdot\frac{x-y}{|x-y|}\Big|_p^p\rho_{m}(x-y) dydx\\
&=\int_{\Omega\setminus\Omega_{\lambda}}\int_{(\R^N\setminus\Omega)\cap B(x,\lambda)}\Big|(\nabla u(x)-\i A(x)u(x))\cdot\frac{x-y}{|x-y|}\Big|_p^p\rho_{m}(x-y) dydx\\
&+\int_{\Omega}\int_{(\R^N\setminus\Omega)\cap B(x,\lambda)^c}\Big|(\nabla u(x)-\i A(x)u(x))\cdot\frac{x-y}{|x-y|}\Big|_p^p\rho_{m}(x-y) dydx\\
&\leq M\int_{\Omega\setminus\Omega_{\lambda}}\int_{(\R^N\setminus\Omega)\cap B(x,\lambda)}\rho_{m}(x-y) dxdy+M\int_{\Omega}\int_{(\R^N\setminus\Omega)\cap B(x,\lambda)^c}\rho_{m}(x-y) dydx\\
&\leq M|\Omega\setminus\Omega_{\lambda}|\int_{\{|h|\leq \lambda\}} \rho_{m}(h) dh+ M|\Omega|\int_{\{|h|> \lambda\}} \rho_{m}(h) dh,
\end{align*}
the assertion follows by letting $m\to\infty$, recalling formula \eqref{prorho},
and finally letting $\lambda\to 0$. If $p=1$ the thesis follows recalling Lemma~\ref{Bv-lemmino}.
\end{proof}

\section{Proof of Theorem \ref{Main2}}\label{main_res2}
We state in the following a few elementary inequalities concerning
the norm introduced in \eqref{p-norm}.

\begin{lemma}\label{propnorm}
	The following properties of $|\cdot|_p$ are true:
	\begin{itemize}
		\item[(i)] Let $m=N$ or $m=1$. There exists a positive constant $C=C(p,N)$ such that 
	$|z \cdot w|_p \leq C \, |z|_p |w|_p,$ for all $z \in \C^m, w \in \C^N$.
		\item[(ii)] If $T:\R^{N}\to \C^N$ is a $C^1$ function, there exists a positive constant $C$ such that
		$$
		\Big|\Big| T(x)\cdot \frac{x-y}{|x-y|}+{\mathcal{O}}(|x-y|)\Big|^p_p-\Big|T(x)\cdot \frac{x-y}{|x-y|}\Big|^p_p\Big|\leq C|x-y|,
		$$
		for all $x,y\in\Omega$, where ${\mathcal O}(|x-y|)$ denotes any continuous function $R:\R^{2N}\to \C$ such that $|R(x,y)|_p|x-y|^{-1}$ is bounded in $\Omega\times \Omega$.
	\end{itemize}
\end{lemma}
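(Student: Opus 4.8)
The plan is to reduce everything to the equivalence of norms on finite-dimensional spaces. First I would record the elementary observation that, for $z\in\C^N$,
\[
|z|_p=\big\|(|\Re z|,|\Im z|)\big\|_{\ell^p(\R^2)},\qquad
|z|_2=\Big(\textstyle\sum_{i=1}^N|z_i|^2\Big)^{1/2}=\big\|(|\Re z|,|\Im z|)\big\|_{\ell^2(\R^2)},
\]
so that $|z|_2$ is precisely the Hermitian norm of $z$ and comparing $|\cdot|_p$ with $|\cdot|_2$ only amounts to comparing the $\ell^p$ and $\ell^2$ norms on $\R^2$. Hence there are constants $c_1=c_1(p)$, $c_2=c_2(p)$ with $c_1|z|_2\le|z|_p\le c_2|z|_2$ for all $z\in\C^N$, and reading the same inequalities on $\C^1\cong\C$ (where $|z|_2$ is just the modulus) they hold for scalars as well.

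For (i), I would first use these equivalences on both sides to reduce the claim to an estimate for $|z\cdot w|_2$. When $m=N$ we have $z\cdot w=\sum_{i=1}^N z_iw_i\in\C$, and the ordinary Cauchy--Schwarz inequality applied to $(|z_i|)_i$ and $(|w_i|)_i$ in $\R^N$ gives $|z\cdot w|\le|z|_2|w|_2$. When $m=1$, $z\cdot w=zw\in\C^N$ is a scalar multiple and $|zw|_2=|z|\,|w|_2$ exactly. Undoing the equivalences then yields the claimed bound with a constant $C=C(p,N)$. Equivalently one could simply invoke that all norms on a finite-dimensional space are equivalent and that the Hermitian norm admits the obvious product estimates.

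For (ii), set $a:=T(x)\cdot\frac{x-y}{|x-y|}\in\C$ and let $R=R(x,y)$ be the remainder, so that by hypothesis $|R|_p\le c\,|x-y|$; since $T$ is $C^1$ on the bounded set $\Omega$ it is bounded there, hence $|a|\le M$, and moreover $|\Re R|,|\Im R|\le|R|_p$ stays bounded on $\Omega\times\Omega$. Using $|b|_p^p=|\Re b|^p+|\Im b|^p$ I would split
\[
\Big||a+R|_p^p-|a|_p^p\Big|\le\Big||\Re a+\Re R|^p-|\Re a|^p\Big|+\Big||\Im a+\Im R|^p-|\Im a|^p\Big|,
\]
and bound each summand by the one-variable inequality $\big||s|^p-|t|^p\big|\le p\big(\max(|s|,|t|)\big)^{p-1}|s-t|$, valid for $p\ge1$, applied with $s,t$ in the fixed bounded interval determined by $M$ and $\mathrm{diam}\,\Omega$. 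This gives the right-hand side $\le C(|\Re R|+|\Im R|)\le C'|x-y|$.

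I do not expect a genuine obstacle here: both statements are soft consequences of finite-dimensional norm equivalence together with the local Lipschitz behaviour of $t\mapsto|t|^p$. The only point deserving a line of care is that $a$ and $R$ range over a bounded region uniformly in $x,y\in\Omega$ — this is where boundedness of $\Omega$ and continuity of $T$ enter — after which all constants are uniform.
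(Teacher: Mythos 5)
Your proof is correct, and for part (i) it takes a genuinely different (and slightly cleaner) route than the paper's. The paper proves (i) by expanding $z\cdot w$ into real and imaginary parts, using $|a+b|^p\le C(p)(|a|^p+|b|^p)$ and then the real Cauchy--Schwarz inequality four times, which recombines into $|z|_p^p|w|_p^p$; you instead note that $|z|_p=\|(|\Re z|,|\Im z|)\|_{\ell^p(\R^2)}$ is equivalent, with constants depending only on $p$, to the Hermitian norm $|z|_2=(\sum_j|z_j|^2)^{1/2}$, for which $|z\cdot w|\le|z|_2|w|_2$ (case $m=N$) and $|zw|_2=|z|\,|w|_2$ (case $m=1$) are immediate; this buys a shorter argument and makes transparent that the constant can be taken to depend on $p$ alone. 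For part (ii) your argument is essentially the paper's: the paper applies the elementary bound $|b^p-a^p|\le M(a^{p-1}+b^{p-1})|b-a|$ directly to the nonnegative scalars $a=\big|T(x)\cdot\frac{x-y}{|x-y|}+\mathcal{O}(|x-y|)\big|_p$ and $b=\big|T(x)\cdot\frac{x-y}{|x-y|}\big|_p$, together with the triangle inequality $|a-b|\le|\mathcal{O}(|x-y|)|_p\le C|x-y|$, whereas you first split $|\cdot|_p^p$ into the real and imaginary contributions and apply the same one-variable Lipschitz estimate for $t\mapsto t^p$ to each piece; both versions rest on exactly the same ingredients (boundedness of $T$ on the bounded set $\Omega$, the hypothesis $|R(x,y)|_p\le C|x-y|$, and local Lipschitz continuity of $t\mapsto t^p$ on bounded intervals), and you rightly flag the boundedness of $a$ and of the remainder as the only point needing care, which the paper uses implicitly as well.
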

\begin{proof}
	To prove $(i)$ we proceed as follows: let $z \in \C^N$,
	\begin{align*}
	|z\cdot w|^p_p=\Big|\sum_{j=1}^N z_jw_j\Big|^p_p&=\Big(\Big|\sum_{j=1}^N \Re z_j\Re w_j -\Im z_j\Im w_j+ \i\Big(\Re z_j\Im w_j+\Im z_j\Re w_j\Big)\Big|_p\Big)^p\\
	&=\Big|\sum_{j=1}^N \Re z_j\Re w_j -\Im z_j\Im w_j\Big|^p+ \Big|\sum_{j=1}^N\Re z_j\Im w_j+\Im z_j\Re w_j\Big|^p\\
	&	\leq C(p)\Big(\Big|\sum_{j=1}^N \Re z_j\Re w_j\Big|^p+\Big|\sum_{j=1}^N \Im z_j\Im w_j\Big|^p+\Big|\sum_{j=1}^N \Re z_j\Im w_j\Big|^p+\Big|\sum_{j=1}^N \Im z_j\Re w_j\Big|^p\Big)\\
	& \leq C(p)\left(| \Re z|^p |\Re w|^p+ |\Im z|^p|\Im w|^p+ |\Re z|^p|\Im w|^p+ |\Im z|^p|\Re w|^p\right)\\
	& =C\, |z|_p^p \, |w|_p^p.
	\end{align*}
	The case $m=1$, i.e. $z \in \C$, works in a similar way.\\
	To prove $(ii)$, it is sufficient to combine the inequality 
	$|b^p-a^p|\leq M(a^{p-1}+b^{p-1})|b-a|$ for
	$$
	a:=\left| T(x)\cdot \frac{x-y}{|x-y|}+{\mathcal{O}}(|x-y|)\right|_p,\qquad
	b:=\left|T(x)\cdot \frac{x-y}{|x-y|}\right|_p,
	$$
	with the triangular inequality
	$$
	\left|\left| T(x)\cdot \frac{x-y}{|x-y|}+{\mathcal{O}}(|x-y|)\right|_p-\left|T(x)\cdot \frac{x-y}{|x-y|}\right|_p\right|\leq |{\mathcal{O}}(|x-y|)|_p\leq C|x-y|,
	$$
	taking into account that $a,b$ are bounded in $\Omega$.
\end{proof}
We start with the following lemma.

\begin{lemma}
	\label{stima1}
	Let $A:\R^N\to\R^N$ be locally bounded. Then, 
	for any	compact $V\subset \R^N$ with $\Omega \Subset V$,
	there exists $C=C(A,V)>0$ such that
	\begin{equation*}
	\int_{\R^{n}} |u(y+h)-e^{\i  h\cdot A\left(y+\frac{h}{2}\right)}u(y)|^{p}_pdy\leq C |h|^{p} \|u\|^{p}_{W^{1,p}_{A}(\R^{n})}, 
	\end{equation*}
	for all $u\in W^{1,p}_{A}(\R^{N})$ such that $u=0$ on $V^c$ and any $h\in \R^{N}$ with $|h|\leq 1$.
\end{lemma}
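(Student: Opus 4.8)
The plan is to reduce the magnetic difference quotient to the non-magnetic one plus an error controlled by the smoothness of $A$, and then invoke the classical translation estimate for Sobolev functions. First I would write, for $u \in W^{1,p}_A(\R^N)$ vanishing on $V^c$,
\[
u(y+h) - e^{\i h \cdot A(y+\frac h2)} u(y)
= \bigl(u(y+h) - u(y)\bigr) + \bigl(1 - e^{\i h \cdot A(y+\frac h2)}\bigr) u(y),
\]
so that by the elementary inequality $|a+b|_p^p \le 2^{p-1}(|a|_p^p + |b|_p^p)$ it suffices to bound the two pieces separately. For the second piece, since $|1 - e^{\i t}| \le |t|$ for $t \in \R$ and $A$ is bounded on the compact set $V + \overline{B_1}$, we get $|1 - e^{\i h \cdot A(y+\frac h2)}|_p \le C|h \cdot A(y+\frac h2)| \le C(A,V)\,|h|$ pointwise, whence $\int_{\R^N} |1 - e^{\i h \cdot A(y+\frac h2)}|_p^p |u(y)|_p^p\, dy \le C|h|^p \|u\|_{L^p}^p$.

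The first piece, $\int_{\R^N} |u(y+h) - u(y)|_p^p\, dy$, requires care because $u$ is only in the \emph{magnetic} Sobolev space, not a priori in $W^{1,p}(\R^N)$. The remedy is to observe that on the fixed compact set $V$ the magnetic gradient $\nabla u - \i A u$ lies in $L^p$ and $A$ is bounded, so $\nabla u = (\nabla u - \i A u) + \i A u \in L^p(V)$, and since $u = 0$ off $V$ this gives $u \in W^{1,p}(\R^N)$ with $\|\nabla u\|_{L^p(\R^N)} \le [u]_{W^{1,p}_A(\R^N)} + C\|A\|_{L^\infty(V)}\|u\|_{L^p(\R^N)} \le C(A,V)\|u\|_{W^{1,p}_A(\R^N)}$. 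Then the standard estimate $\int_{\R^N} |u(y+h) - u(y)|^p\, dy \le |h|^p \int_{\R^N} |\nabla u|^p\, dy$ (proved by writing $u(y+h)-u(y) = \int_0^1 \nabla u(y+th)\cdot h\, dt$, applying Jensen, and integrating in $y$ — first for $u \in C_c^\infty$ and then by density) finishes this term. One must also keep track of the distinction between $|\cdot|$ and $|\cdot|_p$ on $\C^N$, but these are comparable with constants depending only on $p$ and $N$, which can be absorbed into $C$.

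Combining the two bounds yields
\[
\int_{\R^N} |u(y+h) - e^{\i h \cdot A(y+\frac h2)} u(y)|_p^p\, dy
\le C|h|^p\bigl([u]_{W^{1,p}_A(\R^N)}^p + \|u\|_{L^p(\R^N)}^p\bigr)
= C|h|^p \|u\|_{W^{1,p}_A(\R^N)}^p,
\]
as desired, for all $|h| \le 1$. The main obstacle is the one just addressed: making sure the translation estimate is applied to a genuine $W^{1,p}(\R^N)$ function, which forces the hypothesis that $u$ is compactly supported in $V$ (so that $\nabla u \in L^p$ globally) and makes the constant depend on $V$ through $\|A\|_{L^\infty(V+\overline{B_1})}$ and $\mathrm{Lip}$-free bounds. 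Everything else is routine: the density argument for the difference-quotient inequality and the pointwise bound on $1 - e^{\i t}$ are standard.
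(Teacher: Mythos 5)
Your argument is correct, but it takes a genuinely different route from the paper. The paper proves the estimate by a single ``magnetic'' fundamental-theorem-of-calculus: for smooth $u$ it sets $\varphi(t)=e^{\i(1-t)h\cdot A(y+\frac h2)}u(y+th)$, so that $\varphi'(t)$ produces directly the covariant quantity $\nabla u(y+th)-\i A(y+\frac h2)u(y+th)$; after integrating in $y$, changing variables and using boundedness of $A$ near $V$ to replace $A(y+\frac h2)$ by $A$ at the base point, it concludes and then extends to general $u\in W^{1,p}_A$ by density. You instead split $u(y+h)-e^{\i h\cdot A(y+\frac h2)}u(y)$ into the plain translation difference plus the phase error $(1-e^{\i h\cdot A(y+\frac h2)})u(y)$, bound the latter via $|1-e^{\i t}|\le |t|$ and $\sup_{V+\overline{B_1}}|A|$ (note the pointwise bound on the phase factor is only needed on $\supp u\subset V$, which is exactly where it is multiplied by $|u(y)|_p^p$, so this is harmless), and reduce the former to the classical translation estimate in $W^{1,p}(\R^N)$ after the key observation that compact support in $V$ plus local boundedness of $A$ force $\nabla u=(\nabla u-\i Au)+\i Au\in L^p(\R^N)$ with $\|\nabla u\|_{L^p}\le [u]_{W^{1,p}_A(\R^N)}+\|A\|_{L^\infty(V)}\|u\|_{L^p}$. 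Your route buys simplicity: no density argument in the magnetic space is needed (only the standard $W^{1,p}$ difference-quotient bound), and it makes transparent why the compact-support hypothesis and the dependence $C=C(A,V)$ enter. The paper's route keeps the whole computation at the level of the covariant derivative, which is the same mechanism reused later (e.g.\ in Proposition~\ref{form-smooth} and Lemma~\ref{Ponciarello}), and so fits the overall scheme better; in terms of the final constant the two proofs are equivalent, both depending on $\sup|A|$ on a unit neighborhood of $V$.
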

\begin{proof}
	Assume first that $u\in C_{0}^{\infty}(\R^{N})$ 
	with $u=0$ on $V^c$.  Fix $y, h\in \R^{N}$ and define
	\[
	\varphi(t):=e^{\i (1-t) h\cdot A\left(y+\frac{h}{2}\right)}u(y+th),\quad\, t\in [0,1].
	\]
	Then we have
	$
	u(y+h)-e^{\i  h\cdot A\left(y+\frac{h}{2}\right)}u(y)=\int_{0}^{1}\varphi^{\prime}(t)dt,
	$
	and since
	\[
	\varphi^{\prime}(t)=e^{\i (1-t) h\cdot A\big(y+\frac{h}{2}\big)}\,h\cdot\Big(\nabla_{y}u(y+th)-\i A\Big(y+\frac{h}{2}\Big)u(y+th)\Big),
	\]
	by H\"older inequality and recalling that $|e^{\i (1-t) h\cdot A\big(y+\frac{h}{2}\big)}|_p\leq 2$ we get
	\[
	|u(y+h)-e^{\i  h\cdot A\left(y+\frac{h}{2}\right)}u(y)|^{p}_p\leq 2|h|^{p}\int_{0}^{1} \Big|\nabla_{y}u(y+th)-\i A\Big(y+\frac{h}{2}\Big)u(y+th)\Big|^{p}_p dt.
	\]
	Therefore, integrating with respect to $y$ over $\R^N$ and using Fubini's Theorem, we get
	\begin{align*}
	\int_{\R^{N}} |u(y+h)-e^{\i  h\cdot A\left(y+\frac{h}{2}\right)}u(y)|^{p}_pdy &\leq 2|h|^{p}
	\int_{0}^{1}dt\int_{\R^{n}} \Big|\nabla_{y}u(y+th)-\i A\Big(y+\frac{h}{2}\Big)u(y+th)\Big|^{p}_p dy\nonumber\\
	&=2|h|^{p} \int_{0}^{1}dt\int_{\R^{N}} \Big|\nabla_{z}u(z)-\i A\Big(z+\frac{1-2t}{2}h\Big)u(z)\Big|^{p}_pdz\nonumber\\
	&\leq C |h|^{p} \int_{\R^{n}} |\nabla_{z}u(z)-\i A\left(z\right)u(z)|^{p}_pdz \\&
	+C|h|^{p} \int_{V} \Big|A\Big(z+\frac{1-2t}{2}h\Big)-A(z)\Big|^{p}_p|u(z)|^{p}_pdz.
	\end{align*}
	Then, since $A$ is bounded on the set $V$, we have for some constant $C>0$
	\begin{align*}
	\int_{\R^{N}} |u(y+h)-e^{\i  h\cdot A\left(y+\frac{h}{2}\right)}u(y)|^{p}_p dy&\leq C |h|^{p}\left( \int_{\R^{N}} |\nabla_{z}u(z)-\i
	A\left(z\right)u(z)|^{p}_pdz+ \int_{\R^{n}} |u(z)|^{p}_p dz\right)\nonumber\\&
	=C |h|^{p} 
	\|u\|^{p}_{W^{1,p}_{A}(\R^{N})}.
	\end{align*}
	When dealing with a general $u$ we can argue by a density argument \cite[Theorem 7.22]{LL}.
\end{proof}

\begin{lemma}\label{firstLemma}
	Let $A:\R^N\to\R^N$ be locally bounded.
	Let $u\in W^{1,p}_{A}(\Omega)$ and $\rho\in L^{1}(\R^{N})$ with $\rho\geq0$. Then
	\[
	\int_{\Omega}\int_{\Omega}\frac{|u(x)-e^{\i (x-y)\cdot A\left(\frac{x+y}{2}\right)}u(y)|^p_p}{|x-y|^{p}}\rho(x-y)\,dxdy\leq C\|\rho\|_{L^{1}}\|u\|^{p}_{W^{1,p}_{A}(\Omega)}
	\]
	where $C$ depends only on $\Omega$ and $A$.
\end{lemma}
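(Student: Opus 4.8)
The statement is a localization-type estimate: it bounds the magnetic Gagliardo-type double integral against the $W^{1,p}_A$-norm, with a constant depending only on $\Omega$ and $A$. My plan is to reduce it to the global estimate already proven in Lemma~\ref{stima1} by (i) extending $u$ to all of $\mathbb{R}^N$ and (ii) exploiting that on $\Omega\times\Omega$ the kernel $\rho(x-y)/|x-y|^p$ can be integrated after first splitting the region into $\{|x-y|<1\}$ and $\{|x-y|\ge 1\}$.

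First I would fix a bounded extension domain (or, since here $\Omega$ need only be what the hypothesis permits, use a compact $V$ with $\Omega\Subset V$ and the standard Sobolev extension), obtaining $\tilde u\in W^{1,p}_A(\mathbb{R}^N)$ with $\tilde u=0$ outside $V$ and $\|\tilde u\|_{W^{1,p}_A(\mathbb{R}^N)}\le C(\Omega,A)\,\|u\|_{W^{1,p}_A(\Omega)}$. (One must check the extension is compatible with the magnetic norm; this follows because on the bounded set $V$ the magnetic and ordinary Sobolev norms are equivalent with constants depending on $\|A\|_{L^\infty(V)}$, exactly as in Lemma~\ref{Equiv} / Lemma~\ref{ExtDom-new} for the $BV$ case.) Then, writing $h=x-y$ and using Fubini,
\begin{align*}
\int_{\Omega}\int_{\Omega}\frac{|u(x)-e^{\i(x-y)\cdot A(\frac{x+y}{2})}u(y)|_p^p}{|x-y|^p}\,\rho(x-y)\,dxdy
&\le \int_{\mathbb{R}^N}\frac{\rho(h)}{|h|^p}\Big(\int_{\mathbb{R}^N}|\tilde u(y+h)-e^{\i h\cdot A(y+\frac h2)}\tilde u(y)|_p^p\,dy\Big)dh.
\end{align*}

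For $|h|\le 1$, Lemma~\ref{stima1} gives the inner integral $\le C|h|^p\|\tilde u\|_{W^{1,p}_A(\mathbb{R}^N)}^p$, so that piece contributes $\le C\|\tilde u\|_{W^{1,p}_A(\mathbb{R}^N)}^p\int_{\{|h|\le 1\}}\rho(h)\,dh\le C\|\rho\|_{L^1}\|u\|_{W^{1,p}_A(\Omega)}^p$. For $|h|\ge 1$, one has $|h|^{-p}\le 1$ and the elementary bound $|\tilde u(y+h)-e^{\i h\cdot A(y+h/2)}\tilde u(y)|_p^p\le C(|\tilde u(y+h)|_p^p+|\tilde u(y)|_p^p)$ (using $|e^{\i\theta}|_p\le 2$); integrating in $y$ gives $\le C\|\tilde u\|_{L^p(\mathbb{R}^N)}^p\le C\|u\|_{W^{1,p}_A(\Omega)}^p$, and then integrating against $\rho(h)$ over $\{|h|\ge 1\}$ yields $\le C\|\rho\|_{L^1}\|u\|_{W^{1,p}_A(\Omega)}^p$. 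Summing the two regions gives the claim.

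The only genuine subtlety — the ``hard part'' — is the reduction to $\mathbb{R}^N$: one needs an extension operator that is bounded on $W^{1,p}_A$, and for a general $\Omega$ (not Lipschitz) this is not automatic. I would handle it exactly as above via the norm equivalence with the non-magnetic space on the bounded set $V$, so that any standard Sobolev extension operator suffices and the magnetic structure is only used through Lemma~\ref{stima1} applied to the extended function; the dependence of the final constant $C$ on $\Omega$ enters precisely through this extension step (and through $\|A\|_{L^\infty(V)}$). If instead one wants to avoid extension altogether, an alternative is to prove the $|h|\le 1$ bound directly by the fundamental-theorem-of-calculus argument of Lemma~\ref{stima1} carried out only for pairs $(x,y)$ with the segment $[x,y]\subset\Omega$, but this forces an extra geometric hypothesis on $\Omega$, so the extension route is cleaner.
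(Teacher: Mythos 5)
Your proof is correct and follows essentially the same route as the paper: extend $u$ to a compactly supported $\tilde u\in W^{1,p}_A(\R^N)$ (the paper cites \cite[Lemma 2.2]{BM} for this), pass from $\Omega\times\Omega$ to $\R^N\times\R^N$ by Fubini and a change of variables, and invoke Lemma~\ref{stima1}. Your explicit splitting into $\{|h|\le 1\}$ and $\{|h|\ge 1\}$ with the trivial $L^p$ bound on the far region is in fact a welcome touch of care, since Lemma~\ref{stima1} is only stated for $|h|\le 1$ while the paper applies the resulting estimate without comment for all $h$.
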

\begin{proof}
	Let $V\subset \R^N$ be a fixed compact set with $\Omega\Subset V$. 
	Given $u\in W^{1,p}_A(\Omega)$, 
	there exists 
	$\tilde u\in W^{1,p}_{A}(\R^{N})$ with $\tilde u=u$ on $\Omega$ and $\tilde u=0$ on $V^c$ (see e.g. \cite[Lemma 2.2]{BM}).
	By Lemma \ref{stima1}, we obtain
	\begin{equation}
	\int_{\R^{N}} |\tilde u(y+h)-e^{\i  h\cdot A\left(y+\frac{h}{2}\right)}\tilde u(y)|^{p}_p dy
	\leq C |h|^{p} \|\tilde u\|^{p}_{W^{1,p}_{A}(\R^{N})}\leq C |h|^{p} \|u\|^{p}_{W^{1,p}_{A}(\Omega)}, \label{secondeq}
	\end{equation}
	for some positive constant $C$ depending on $\Omega$ and $A$. Then, in light of \eqref{secondeq}, we get
	\begin{align*}
	\int_{\Omega}\int_{\Omega}\frac{|u(x)-e^{\i (x-y)\cdot A\left(\frac{x+y}{2}\right)}u(y)|^p_p}{|x-y|^{p}}\rho(x-y)\,dxdy
	&\leq\int_{\R^{N}}\int_{\R^{N}}\rho(h)\frac{|\tilde u(y+h)-e^{\i  h\cdot A\left(y+\frac{h}{2}\right)}\tilde u(y)|^{p}_p}{|h|^{p}}dydh\nonumber
	\\& =\int_{\R^{N}}\frac{\rho(h)}{|h|^{p}}\Big(\int_{\R^{N}}|\tilde u(y+h)-e^{\i  h\cdot A\left(y+\frac{h}{2}\right)}\tilde u(y)|^{p}_p dy\Big)dh
	\\& \leq C\|\rho\|_{L^{1}} \|u\|^{p}_{W^{1,p}_{A}(\Omega)},
	\end{align*}
	concluding the proof.
\end{proof}

\noindent
We can now conclude the proof of Theorem~\ref{Main2}. Setting
\[
F_{m}^{u}(x,y):=\frac{u(x)-e^{\i (x-y)\cdot A\left(\frac{x+y}{2}\right)}u(y)}{|x-y|}\rho_{m}^{1/p}(x-y),
\quad\,\,\, x,y\in\Omega,\,\, m\in\N,
\]
by virtue of Lemma \ref{firstLemma}, for all $u,v\in W^{1,p}_{A}(\Omega)$, 
we have (recall that $\rho_m$ fulfills condition \eqref{lim-cond})
\[
\big|\|F_{m}^{u}\|_{L^p(\Omega\times\Omega)}-\|F_{m}^{v}\|_{L^p(\Omega\times\Omega)}\big|\leq\|F_{m}^{u}-F_{m}^{v}\|_{L^p(\Omega\times\Omega)}
\leq C  \|u-v\|_{W^{1,p}_{A}(\Omega)},
\]
for some $C>0$ depending on $\Omega$ and $A$.
This allows to prove the assertion for functions $u\in C^{2}(\bar{\Omega})$
since for every $u\in W^{1,p}_A(\Omega)$ there is a sequence $\{u_j\}_{j\in\N}
\subset C^\infty(\overline{\Omega})$ such that $\|u_j-u\|_{W^{1,p}_{A}(\Omega)}\to 0$.
Therefore, the assertion follows by Proposition~\ref{form-smooth}.

\section{Proof of Theorem \ref{Main}}
\label{sec6}

\noindent
We first state a technical lemma.

\begin{lemma}\label{proppsi}
Let $\Omega\subset \R^N$ be open and bounded and $A \in C^{2}(\R^{N},\R^N)$ and 
$R>0$. For $x,y \in \Omega$ let 
$$
\psi(z):=e^{\i(x-y)\cdot A\left(\frac{x+y}{2}+z\right)}, \quad z \in B(0,R).
$$
Then there exist positive constants $D_1=D_1(A, \Omega)$ and $D_2=D_2(A, \Omega, R)$ such that 
\begin{align}\label{ineqpsi}
\left|\psi(z)-\psi(0)\right|_1\leq &D_1\left|z\right|\left|x-y\right|+D_2|z|^2\left|x-y\right|,
\end{align}
for every $z\in B(0,R)$. Moreover, $\limsup_{R\to 0} D_2<\infty$.
\end{lemma}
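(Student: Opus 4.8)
The statement is a Taylor-expansion estimate for the function $\psi(z)=e^{\i(x-y)\cdot A(\frac{x+y}{2}+z)}$, and the plan is to write $\psi(z)-\psi(0)$ as an integral of its derivative along the segment from $0$ to $z$ and bound the result. First I would compute $\nabla_z\psi(z)=\i\,\psi(z)\big((x-y)\cdot \nabla A(\tfrac{x+y}{2}+z)\big)$, where $(x-y)\cdot\nabla A$ denotes the vector with components $\sum_k(x_k-y_k)\partial_k A^{(j)}$. Since $|\psi(z)|=1$ pointwise and $A\in C^2$, on the compact set $\overline{\Omega}$ (for the base point) together with $\overline{B(0,R)}$ (for the shift) we have $\|\nabla A\|_{L^\infty}$ and $\|D^2 A\|_{L^\infty}$ finite, so $|\nabla_z\psi(z)|\le C|x-y|$ with $C$ depending only on $A$ and $\Omega$ (not on $R$, as long as $R$ stays bounded). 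Integrating, $|\psi(z)-\psi(0)|\le |z|\sup_{[0,z]}|\nabla_z\psi|\le C|z||x-y|$, which already gives the leading term $D_1|z||x-y|$ with $D_1$ independent of $R$.

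To get the cleaner form in \eqref{ineqpsi} with the remainder quadratic in $|z|$, I would instead do a first-order Taylor expansion of $z\mapsto\psi(z)$ around $z=0$ with integral remainder:
$$
\psi(z)-\psi(0)=\nabla_z\psi(0)\cdot z+\int_0^1(1-t)\,z^{T}D_z^2\psi(tz)\,z\,dt.
$$
The first term is $\i\psi(0)\big((x-y)\cdot\nabla A(\tfrac{x+y}{2})\big)\cdot z$, whose modulus is at most $D_1|z||x-y|$ with $D_1:=\|\nabla A\|_{L^\infty(\overline\Omega)}$ (again $R$-free). For the remainder, one computes $D_z^2\psi$ explicitly: differentiating $\nabla_z\psi=\i\psi\,[(x-y)\cdot\nabla A(\cdot)]$ once more produces two kinds of terms, one with a factor $\big((x-y)\cdot\nabla A\big)^{\otimes 2}$ (hence $O(|x-y|^2)$) and one with $(x-y)\cdot D^2A$ (hence $O(|x-y|)$), both multiplied by $\psi$ which is bounded by $1$. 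Since $x,y\in\Omega$ with $\Omega$ bounded, $|x-y|\le\mathrm{diam}(\Omega)$, so the $|x-y|^2$ term is absorbed into a constant times $|x-y|$, and the whole remainder is bounded by $D_2|z|^2|x-y|$ with $D_2=D_2(A,\Omega,R)$ depending on $\sup_{\overline\Omega+\overline{B(0,R)}}(|\nabla A|^2+|D^2A|)$. Finally, passing to the norm $|\cdot|_1$ only changes constants by a dimensional factor (since $|\zeta|_1\le\sqrt 2\,|\zeta|$ for $\zeta\in\C$), so the estimate persists.

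For the last assertion, $\limsup_{R\to 0}D_2<\infty$: as $R\searrow 0$ the relevant supremum of $|\nabla A|^2+|D^2A|$ is taken over shrinking neighborhoods of $\frac{x+y}{2}\in\overline\Omega$, so it converges (decreasingly) to $\sup_{\overline\Omega}(|\nabla A|^2+|D^2A|)<\infty$; thus $D_2$ stays bounded as $R\to 0$. There is no real obstacle here — the only mild care needed is bookkeeping which constants are allowed to depend on $R$ and which are not, and making sure that the passage from the Euclidean modulus on $\C$ to the norm $|\cdot|_1$ in \eqref{p-norm} is handled by a harmless equivalence constant.
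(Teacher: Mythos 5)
Your proof is correct and follows essentially the same route as the paper: a second-order Taylor expansion of $\psi$ at $z=0$, with the gradient bounded by $|\nabla A|\,|x-y|$ and the Hessian by $|x-y|\bigl(|x-y|\,|\nabla A|^2+|D^2A|\bigr)$, absorbing $|x-y|\le \operatorname{diam}(\Omega)$ into the constant and using monotonicity of the sup in $R$ for the last claim; the only (cosmetic) differences are that you use the integral form of the remainder on the complex-valued $\psi$ plus the equivalence $|\zeta|_1\le\sqrt2\,|\zeta|$, whereas the paper applies the Lagrange-remainder formula separately to $\Re\psi=\cos\theta$ and $\Im\psi=\sin\theta$. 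One trivial bookkeeping point: the base points $\tfrac{x+y}{2}$ lie in the closed convex hull of $\Omega$ rather than in $\overline\Omega$, so the suprema of $|\nabla A|$ and $|D^2A|$ should be taken over that (still compact) set, which changes nothing.
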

\begin{proof}
Recalling \eqref{p-norm}, we can prove \eqref{ineqpsi} separately
for the real part $\Re \psi$ and the imaginary part $\Im \psi$.
To simplify the notation, for fixed $x,y \in \Omega$, let us denote 
$$
\theta(z) := (x-y) \cdot A\left( \dfrac{x+y}{2}+z\right), \quad  z \in B(0,R).
$$
Therefore, 
$$\psi(z) = \Re \psi(z) + \i \, \Im \psi(z) = \cos(\theta(z)) + \i \, \sin(\theta(z)), \quad z\in B(0,R).$$
We start considering first the real part $\Re \psi$. By Taylor's formula with Lagrange's rest,
we have
\begin{align}\label{taylor}
\Re \psi(z)- \Re \psi(0)=\nabla \Re \psi(0) \cdot z +\frac{1}{2} \nabla^2 \Re\psi(\bar tz) z\cdot z,
\end{align}
for some $\bar t\in [0,1]$,
where $\nabla^{2} \Re\psi$ stands for the Hessian matrix of $\Re \psi$.
A simple computation gives
\begin{equation*}
\partial_{z_{j}} \Re \psi (z) = - \sin(\theta(z)) \, \partial_{z_{j}}\theta(z) = 
-\sin(\theta(z))\, \sum_{k=1}^{N}(x_{k}-y_{k}) \partial_{z_{j}}A^{(k)}\left( \dfrac{x+y}{2} + z\right),
\end{equation*}
for every $j=1,\ldots,N$. Therefore, we have
\begin{align}\label{grad}
\nabla \Re \psi(0)=-\sin\Big( (x-y)\cdot A\Big(\dfrac{x+y}{2} \Big) \Big) \, (x-y) \nabla A\left(\dfrac{x+y}{2}\right),
\end{align}
where $\nabla A$ denotes the Jacobian matrix of $A$.
Another quite simple computation yields
\begin{equation}\label{hess}
\begin{aligned}
\left(\nabla^2 \Re \psi(z)\right)_{h,j} &
= - \Bigg[ \cos(\theta(z))\left((x-y)\cdot \partial_{z_h}A\Big(\frac{x+y}{2}+z\Big)\right)\left((x-y)\cdot \partial_{z_j}A\Big(\frac{x+y}{2}+z\Big)\right)\\
&\quad + \sin(\theta(z)) (x-y)\cdot \partial_{z_h}\partial_{z_j}A \Big(\frac{x+y}{2}+z\Big) \Bigg], 
\end{aligned}\end{equation}
for every $i,j=1,\ldots, N$.
Now, using \eqref{taylor} and \eqref{grad} we get
\begin{align}
\left|\Re \psi(z)-\Re \psi(0)\right|\leq \left|\nabla A\left(\frac{x+y}{2}\right)\right||z||x-y|+ \frac{1}{2} |z|^2 |\nabla^2 \Re \psi(\overline{t}z)|, \qquad \mbox{for some}\ \overline{t}\in [0,1].
\end{align}
On the other hand, by \eqref{hess} we get
\begin{equation*}
|\nabla^2\Re \psi(\overline{t}z)|
\leq |x-y|\left(C |x-y| \left|\nabla A\left(\frac{x+y}{2}+\overline{t}z\right)\right|^{2}+ \sum_{k=1}^N\left|\nabla^2 A^{(k)}\left(\frac{x+y}{2}+\overline{t}z\right)\right|\right).
\end{equation*}

\noindent Therefore, \eqref{ineqpsi} for $\Re \psi$ follows taking 
\[
D_1:=\sup_{x,y\in \Omega} \Big|\nabla A\Big(\frac{x+y}{2}\Big)\Big|<\infty
\]
and
\[D_2:=\frac{1}{2}\sup_{\overset{x,y\in\Omega}{z\in B(0,R)}}\sum_{k=1}^N\Big|\nabla^2 A^{(k)}\left(\frac{x+y}{2}+\overline{t}z\right)\Big|+ C |x-y|\Big|\nabla A\left(\frac{x+y}{2}+\overline{t}z\right)\Big|^{2}<\infty.\]
The fact that $\limsup_{R \to 0} D_2<\infty$ follows observing that $D_2$ decreases as $R$ decreases.
Since a similar argument holds for $\Im \psi$, we get the assertion.
\end{proof}

\begin{lemma}
\label{approx}
Let $\Omega \subset \R^N$ be an open set and $A\in C^2(\R^N,\R^N)$.
Let $u\in L^1(\Omega)$. Denote by $u_\eps$ its regularization as 
defined in \eqref{mollif}. 
Define
\[
\Omega_r:=\{x\in\Omega\ |\ d(x,\partial\Omega)>r\}, \quad \forall r>0.
\]
Then, for all $r>0$ and $\eps\in (0,r)$ there holds
\begin{align*}
&\int_{\Omega_r}\int_{\Omega_r}\frac{|u_{\varepsilon}(x)-e^{\i(x-y)\cdot A(\frac{x+y}{2})}u_{\varepsilon}(y)|_1}{|x-y|}\rho_m(x-y) dxdy\\
\nonumber
&\leq \int_{\Omega}\int_{\Omega}\frac{|u(x)-e^{\i(x-y)\cdot A(\frac{x+y}{2})}u(y)|_1}{|x-y|}\rho_m(x-y) dxdy\\
\nonumber
&+\frac{1}{\varepsilon^N}\int_{B(0,\varepsilon)}\eta\left(\frac{z}{\varepsilon}\right)\int_{\Omega}\int_{\Omega} \frac{\left|e^{\i(x-y)\cdot A(\frac{x+y}{2}+z)}u(y)-e^{\i(x-y)\cdot A(\frac{x+y}{2})}u(y)\right|_1}{|x-y|} \rho_m(x-y) dxdydz.
\end{align*}
and
\begin{equation*}
\lim_{\varepsilon\to 0}\lim_{m\to\infty} \frac{1}{\varepsilon^N}\int_{B(0,\varepsilon)}\eta\left(\frac{z}{\varepsilon}\right)\int_{\Omega}\int_{\Omega} \frac{\left|e^{\i(x-y)\cdot A(\frac{x+y}{2}+z)}u(y)-e^{\i(x-y)\cdot A(\frac{x+y}{2})}u(y)\right|_1}{|x-y|} \rho_m(x-y) dxdydz=0.
\end{equation*}
\end{lemma}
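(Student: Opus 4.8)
The plan is to obtain both assertions from a single simultaneous translation of the two variables, the analytic content being carried entirely by Lemma~\ref{proppsi}.

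\emph{The inequality.} I would first rewrite the regularization \eqref{mollif} as $u_\varepsilon(x)=\varepsilon^{-N}\int_{B(0,\varepsilon)}\eta(z/\varepsilon)\,u(x-z)\,dz$ (put $z=x-y$), and observe that for $x,y\in\Omega_r$ and $|z|<\varepsilon<r$ one has $x-z,\,y-z\in\Omega$, since ${\rm dist}(x,\partial\Omega)>r$ forces $B(x,r)\subset\Omega$. Jensen's inequality for the convex norm $|\cdot|_1$ against the unit-mass measure $\varepsilon^{-N}\eta(z/\varepsilon)\,dz$ (the phase $e^{\i(x-y)\cdot A(\frac{x+y}{2})}$ is constant in $z$ and pulls out) then gives
\[
\big|u_\varepsilon(x)-e^{\i(x-y)\cdot A(\frac{x+y}{2})}u_\varepsilon(y)\big|_1\le \frac1{\varepsilon^N}\int_{B(0,\varepsilon)}\eta(z/\varepsilon)\,\big|u(x-z)-e^{\i(x-y)\cdot A(\frac{x+y}{2})}u(y-z)\big|_1\,dz .
\]
Inside the integral I would add and subtract $e^{\i(x-y)\cdot A(\frac{x+y}{2}-z)}u(y-z)$, apply the triangle inequality for $|\cdot|_1$, divide by $|x-y|$, integrate over $\Omega_r\times\Omega_r$ against $\rho_m(x-y)$, and use Tonelli to bring the $z$--integral to the outside.

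This leaves, under the $z$--integral, two double integrals over $\Omega_r\times\Omega_r$. In each I would perform the change of variables $x\mapsto x+z$, $y\mapsto y+z$ with $z$ frozen: it fixes $x-y$, carries $\tfrac{x+y}{2}-z$ to $\tfrac{x+y}{2}$ and $\tfrac{x+y}{2}$ to $\tfrac{x+y}{2}+z$, and moves the domain to $\Omega_r-z\subset\Omega$. Since the integrands are non-negative, the first integral is bounded by $\int_\Omega\int_\Omega\frac{|u(x)-e^{\i(x-y)\cdot A(\frac{x+y}{2})}u(y)|_1}{|x-y|}\rho_m(x-y)\,dxdy$, uniformly in $z$, so integrating in $z$ (total mass $1$) produces the first term on the right of the claimed inequality; in the second integral, after using $|a-b|_1=|b-a|_1$, one recovers exactly the integrand of the asserted error term. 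This yields the inequality.

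\emph{The limit.} Fix $R>0$ once and for all and take $0<\varepsilon<\min\{R,1\}$. For $|z|<\varepsilon$, Lemma~\ref{proppsi} applied with $x,y\in\Omega$ gives $\big|e^{\i(x-y)\cdot A(\frac{x+y}{2}+z)}-e^{\i(x-y)\cdot A(\frac{x+y}{2})}\big|_1\le(D_1|z|+D_2|z|^2)|x-y|$ with $D_1,D_2$ depending only on $A,\Omega,R$. Combining this with Lemma~\ref{propnorm}\,(i) for $p=1$ (the two scalars being that phase difference and $u(y)$) cancels the singular factor $|x-y|^{-1}$:
\[
\frac{\big|e^{\i(x-y)\cdot A(\frac{x+y}{2}+z)}u(y)-e^{\i(x-y)\cdot A(\frac{x+y}{2})}u(y)\big|_1}{|x-y|}\le C\,(|z|+|z|^2)\,|u(y)|_1\le 2C\varepsilon\,|u(y)|_1,
\]
for some $C=C(A,\Omega,R)$. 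Multiplying by $\varepsilon^{-N}\eta(z/\varepsilon)$ and $\rho_m(x-y)$ and integrating, using $\int_\Omega|u(y)|_1\,dy=\|u\|_{L^1(\Omega)}$ and $\int_\Omega\rho_m(x-y)\,dx\le\int_{\R^N}\rho_m(h)\,dh=\mathcal{H}^{N-1}(\mathbb{S}^{N-1})\int_0^\infty\rho_m(r)r^{N-1}\,dr$, which is bounded for $m$ large by \eqref{lim-cond}, I get that the triple integral in the statement is $\le C'\varepsilon\,\|u\|_{L^1(\Omega)}$ for all $m$ large, with $C'=C'(A,\Omega,R)$. Letting $m\to\infty$ (the bound being uniform in $m$) and then $\varepsilon\to0$ gives $0$.

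\emph{Main obstacle.} The delicate part is the bookkeeping in the first statement: one must check that the two simultaneous translations send $A(\tfrac{x+y}{2}-z)$ back to $A(\tfrac{x+y}{2})$ while sending $A(\tfrac{x+y}{2})$ to $A(\tfrac{x+y}{2}+z)$, so that the residual term has precisely the stated shape, and that the translated domains $\Omega_r-z$ stay inside $\Omega$ — which is exactly why $\varepsilon<r$ is required. Once that is arranged, the limit is essentially immediate: its whole content is the quantitative phase estimate of Lemma~\ref{proppsi}, whose explicit factor $|x-y|$ is precisely what absorbs the kernel singularity $|x-y|^{-1}$.
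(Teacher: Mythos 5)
Your proposal is correct and follows essentially the same route as the paper: triangle inequality under the mollification, the translation $x\mapsto x+z$, $y\mapsto y+z$ sending $\Omega_r-z$ into $\Omega$ and shifting the argument of $A$ by $z$, and then the quantitative phase estimate of Lemma~\ref{proppsi} (with the factor $|x-y|$ cancelling the kernel singularity) together with \eqref{lim-cond} to get a bound of order $\eps\,\|u\|_{L^1(\Omega)}$ uniform in large $m$. The only cosmetic differences are that you insert the intermediate phase $e^{\i(x-y)\cdot A(\frac{x+y}{2}-z)}u(y-z)$ before the change of variables while the paper translates first and then splits, and that you fix $R$ in Lemma~\ref{proppsi} once instead of letting $R=\eps$; both choices are equivalent.
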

\begin{proof}
Let us extend $u$ to the whole of $\R^N$ by zero. 
To simplify the notation, let us still denote by $u$ its extension. 
By definition,
\begin{align*}
u_{\varepsilon}(x)-e^{\i(x-y)\cdot A(\frac{x+y}{2})}u_{\varepsilon}(y)&=\frac{1}{\varepsilon^N}\int_{\R^N}\eta\left(\frac{z}{\varepsilon}\right) (u(x-z)-e^{\i(x-y)\cdot A(\frac{x+y}{2})}u(y-z)) dz\\
\nonumber
&=\frac{1}{\varepsilon^N}\int_{B(0,\varepsilon)}\eta\left(\frac{z}{\varepsilon}\right) (u(x-z)-e^{\i(x-y)\cdot A(\frac{x+y}{2})}u(y-z)) dz.
\end{align*}
Thus, for every $\eps\in (0,r)$, there holds
\begin{align*}
&\int_{\Omega_r}\int_{\Omega_r}\frac{\left|u_{\varepsilon}(x)-e^{\i(x-y)\cdot A(\frac{x+y}{2})}u_{\varepsilon}(y)\right|_1}{|x-y|}\rho_m(x-y) dxdy\\
\nonumber
&\leq \frac{1}{\varepsilon^N}\int_{\Omega_r}\int_{\Omega_r}\int_{B(0,\varepsilon)}\eta\left(\frac{z}{\varepsilon}\right) \frac{\left|u(x-z)-e^{\i(x-y)\cdot A(\frac{x+y}{2})}u(y-z)\right|_1}{|x-y|} \rho_m(x-y) dzdxdy\\
\nonumber
&\leq \frac{1}{\varepsilon^N}\int_{B(0,\varepsilon)}\eta\left(\frac{z}{\varepsilon}\right)\int_{\Omega}\int_{\Omega} \frac{\left|u(x)-e^{\i(x-y)\cdot A(\frac{x+y}{2}+z)}u(y)\right|_1}{|x-y|} \rho_m(x-y) dxdydz
\leq \mathcal{I} + \mathcal{II},
\end{align*}
where
\begin{align*}
\mathcal{I}:&= \frac{1}{\varepsilon^N}\int_{B(0,\varepsilon)}\eta\left(\frac{z}{\varepsilon}\right)\int_{\Omega}\int_{\Omega} \frac{\left|u(x)-e^{\i(x-y)\cdot A(\frac{x+y}{2})}u(y)\right|_1}{|x-y|} \rho_m(x-y) dxdydz\\
\nonumber
&=\int_{\Omega}\int_{\Omega} \frac{\left|u(x)-e^{\i(x-y)\cdot A(\frac{x+y}{2})}u(y)\right|_1}{|x-y|} \rho_m(x-y) dxdy
\end{align*}
and
\[
\mathcal{II}:=\frac{1}{\varepsilon^N}\int_{B(0,\varepsilon)}\eta\left(\frac{z}{\varepsilon}\right)\int_{\Omega}\int_{\Omega} \frac{\left|e^{\i(x-y)\cdot A(\frac{x+y}{2}+z)}u(y)-e^{\i(x-y)\cdot A(\frac{x+y}{2})}u(y)\right|_1}{|x-y|} \rho_m(x-y) dxdydz.
\]
Define $\psi(z):=e^{\i(x-y)\cdot A\left(\frac{x+y}{2}+z\right)}$. Then
$|\psi(z)|_1\leq 2$ for all $z\in B(0,\varepsilon)$ 
and by Lemma \ref{proppsi}
\begin{align*}
\left|\psi(z)-\psi(0)\right|_1\leq D_1\left|z\right|\left|x-y\right|+ D_2|z|^2\left|x-y\right|\qquad \forall x,y\in \Omega, z\in B(0,\varepsilon),
\end{align*}
for some $D_1=D_1(A,\Omega)$ and $D_2=D_2(A,\Omega,\eps)$
which is bounded as $\eps\searrow 0$. Therefore,
\begin{align*}
&\mathcal{II}\leq \frac{D_1}{\varepsilon^N}\int_{B(0,\varepsilon)}\eta\left(\frac{z}{\varepsilon}\right)\int_{\Omega}\int_{\Omega} \left|u(y)\right|_1\left|z\right|\rho_m(x-y) dxdydz+\\
\nonumber
&+\frac{D_2}{\varepsilon^N}\int_{B(0,\varepsilon)}\eta\left(\frac{z}{\varepsilon}\right)\int_{\Omega}\int_{\Omega}\left|u(y)\right|_1|z|^2\rho_m(x-y) dxdydz.
\end{align*}
We have 
\begin{align*}
&\frac{D_2}{\varepsilon^N}\int_{B(0,\varepsilon)}\eta\left(\frac{z}{\varepsilon}\right)\int_{\Omega}\int_{\Omega}\left|u(y)\right|_1|z|^2 \rho_m(x-y) dxdydz\\
\nonumber
&\leq \frac{D_2}{\varepsilon^N}\int_{B(0,\varepsilon)}\eta\left(\frac{z}{\varepsilon}\right)|z|^2 dz \int_{\Omega}\left|u(y)\right|_1\left(\int_{\Omega}\rho_m(x-y)dx\right)dy\leq 
2D_2|\mathbb{S}^{N-1}| \|u\|_{L^1(\Omega)}\varepsilon^2,
\end{align*}
since $\int_{\Omega}\rho_m(x-y) dx\leq |\mathbb{S}^{N-1}|\int_0^\infty \rho_m(r)r^{N-1}dr\leq 2|\mathbb{S}^{N-1}|$, in view of
\eqref{lim-cond}. Analogously, we have
$$
\frac{D_1}{\varepsilon^N}\int_{B(0,\varepsilon)}\eta\left(\frac{z}{\varepsilon}\right)\int_{\Omega}\int_{\Omega} \left|u(y)\right|_1\left|z\right|\rho_m(x-y) dxdydz
\leq  2D_1|\mathbb{S}^{N-1}|\|u\|_{L^1(\Omega)}\varepsilon,
$$
Hence, we conclude that
\begin{align*}
\lim_{\varepsilon\to 0}\lim_{m\to\infty}\mathcal{II}=0,
\end{align*}
and the thesis follows.
\end{proof}

\begin{lemma}\label{stima}
Let $\Omega \subset \R^N$ be an open and bounded set.
Denote by $xy_t:=tx+(1-t)y$
with $t\in [0,1]$ the linear combination of $x,y \in \Omega$. 
There exists a positive constant  
$C=C(N,\Omega,A)$ such that  
\begin{align*}
&\int_{\Omega}\int_{\Omega}\int_0^1 \left|\Big(e^{\i(1-t)(x-y)\cdot A \left(\frac{x+y}{2}\right)}-1\Big)\frac{x-y}{|x-y|}\cdot\left( \nabla_y u(xy_t)-\i A\left(\frac{x+y}{2}\right) u(xy_t)\right)\right|_1 \rho_m(x-y) dtdxdy \\
& \leq C \|u\|_{BV_A(W)}\Big(\int_0^1 r^N\rho_m(r)dr+\int_1^\infty r^{N-1}\rho_m(r)dr\Big),
\end{align*}
for every open set $W\Supset\Omega$ and for every $u\in C^2(\R^N,\C)$ such that $u=0$ on $W^c$.
\begin{proof}
It is readily seen that there exists a positive constant $C=C(A,\Omega)$ such that
	\begin{equation}\label{Estim}
	\left|e^{\i(1-t)(x-y)\cdot A \left(\frac{x+y}{2}\right)}-1\right|_1 \leq C|x-y|,
	\quad\,\, \text{for all $x,y\in\Omega$ and all $t\in [0,1]$}.
	\end{equation}
	Then, by $(i)$ of Lemma \ref{propnorm} with $p=1$ and by
	\eqref{Estim}, we have
	\begin{align*}
	&\int_{\Omega}\int_{\Omega}\int_0^1 \left|\Big(e^{\i(1-t)(x-y)\cdot A \left(\frac{x+y}{2}\right)}-1\Big)\frac{x-y}{|x-y|}\cdot\left( \nabla_y u(xy_t)-\i A\left(\frac{x+y}{2}\right) u(xy_t)\right)\right|_1 \rho_m(x-y) dtdxdy  \\
	& \leq C\int_{\Omega}\int_{\Omega}\int_0^1 \left|e^{\i(1-t)(x-y)\cdot A \left(\frac{x+y}{2}\right)}-1\right|_1 \Big|\nabla_y u(xy_t)-\i A\left(\frac{x+y}{2}\right) u(xy_t)\Big|_1\rho_m(x-y) dtdxdy  \\
	& \leq C\int_{\Omega}\int_{\Omega}\int_0^1 |x-y|\rho_m(x-y)\Big|\nabla_y u(xy_t)-\i A\left(\frac{x+y}{2}\right) u(xy_t)\Big|_1 dtdxdy
	\leq \mathcal{I}+\mathcal{II}
	\end{align*}
	where we have set
	\begin{align*}
	& \mathcal{I}:=C\int_{\Omega}\int_{\Omega}\int_0^1 |x-y|\rho_m(x-y) \Big|\nabla_y u(xy_t)-\i A\left(xy_t\right) u(xy_t)\Big|_1 dtdxdy,\\
	& \mathcal{II}:=C\int_{\Omega}\int_{\Omega}\int_0^1 |x-y| \rho_m(x-y)\Big| A\left(\frac{x+y}{2}\right)- A(xy_t)\Big| \left|u(xy_t)\right|_1 dtdxdy  
	\end{align*}
	for some positive constant 
	$C=C(A,\Omega)$. Then we get 
	\begin{equation*}
	\begin{aligned}
	\mathcal{I} &\leq C\int_{\Omega}\left(\int_{B(y,1)\cap \Omega} |x-y|\rho_m(x-y)\left(\int_0^1\Big|\nabla_y u(xy_t)-\i A\left(xy_t\right) u(xy_t)\Big|_1dt\right)dx\right)dy \\
	&+C\int_{\Omega}\left(\int_{B(y,1)^c \cap \Omega}\rho_m(x-y)\left(\int_0^1\Big|\nabla_y u(xy_t)-\i A\left(xy_t\right) u(xy_t)\Big|_1dt\right)dx\right)dy \\
	&\leq C \int_{\R^N}\left(\int_{B(0,1)} |z|\rho_m(z)\left(\int_0^1\Big|\nabla_y u(y+tz)-\i A\left(y+tz\right) u(y+tz)\Big|_1 dt\right)dz\right)dy \\
	&+C \int_{\R^N}\left(\int_{B(0,1)^c} \rho_m(z)\left(\int_0^1\Big|\nabla_y u(y+tz)-\i A\left(y+tz\right) u(y+tz)\Big|_1 dt\right)dz\right)dy \\
	&\leq C \int_{B(0,1)} |z|\rho_m(z)\left(\int_{\R^N}\int_0^1\Big|\nabla_y u(y+tz)-\i A\left(y+tz\right) u(y+tz)\Big|_1 dtdy\right)dz \\
	&+C \int_{B(0,1)^c} \rho_m(z)\left(\int_{\R^N}\int_0^1 \Big|\nabla_y u(y+tz)-\i A\left(y+tz\right) u(y+tz)\Big|_1 dtdy\right)dz	\\
	& \leq C\left(\int_{W}\Big|\nabla_y u(z)-\i A\left(z\right) u(z)\Big|_1dz\right)\Big(\int_0^1 r^N\rho_m(r)dr+\int_1^\infty r^{N-1}\rho_m(r)dr\Big),
	\end{aligned}\end{equation*}
	where in the last inequality we used
	$$
	\int_{\R^N}\int_0^1 \Big|\nabla_y u(y+tz)-\i A\left(y+tz\right) u(y+tz)\Big|_1 dtdy=\int_{\R^N}\Big|\nabla_y u(z)-\i A\left(z\right) u(z)\Big|_1dz
	$$
	as well as
	$$
	\int_{W}\Big|\nabla_y u(z)-\i A\left(z\right) u(z)\Big|_1dz=\int_{\R^N}\Big|\nabla_y u(z)-\i A\left(z\right) u(z)\Big|_1dz.
	$$
On the other hand, denoting by ${\rm Conv}(\Omega)$ the convex hull of $\Omega$,
and arguing in a similar fashion, 
one obtains
\begin{align*}
\mathcal{II} &\leq C\|A\|_{L^\infty({\rm Conv}(\Omega))}
\left(\int_{W}\Big|u(z)\Big|_1dz\right)\Big(\int_0^1 r^N\rho_m(r)dr+\int_1^\infty r^{N-1}\rho_m(r)dr\Big) \\
& \leq C\left(\int_{W}\Big|u(z)\Big|_1dz\right)\Big(\int_0^1 r^N\rho_m(r)dr+\int_1^\infty r^{N-1}\rho_m(r)dr\Big),
\end{align*}
for some positive constant $C=C(A,\Omega)$.\
The desired assertion finally follows by combining the above inequalities and then using Lemma~\ref{Bv-lemmino}.
\end{proof}	
\end{lemma}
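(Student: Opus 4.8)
The plan is to peel off, one at a time, the three features that obstruct a direct estimate — the oscillatory prefactor, the mismatch between the frozen potential $A(\frac{x+y}{2})$ and $A(xy_t)$, and the lack of product structure of the integrand — so as to reduce the whole triple integral to one of the model form $\int_\Omega\int_\Omega\int_0^1|x-y|\,\rho_m(x-y)\,g(xy_t)\,dt\,dx\,dy$, with $g:\R^N\to[0,\infty)$ supported in $\overline W$, which is then handled by a change of variables.

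First I would use that $A$ is bounded on $\mathrm{Conv}(\Omega)$: the phase $(1-t)(x-y)\cdot A(\frac{x+y}{2})$ is real and $O(|x-y|)$ uniformly in $t\in[0,1]$ and $x,y\in\Omega$, so writing $e^{\i\theta}-1=(\cos\theta-1)+\i\sin\theta$ and recalling the definition \eqref{p-norm} yields $|e^{\i(1-t)(x-y)\cdot A(\frac{x+y}{2})}-1|_1\le C|x-y|$ with $C=C(A,\Omega)$. Since the unit vector $\frac{x-y}{|x-y|}$ is real, multiplying it by a complex scalar $a$ leaves the $|\cdot|_1$-norm unchanged, that is $|a\frac{x-y}{|x-y|}|_1=|a|_1$; applying part $(i)$ of Lemma~\ref{propnorm} with $p=1$ then bounds the integrand pointwise by $C|x-y|\,\rho_m(x-y)\,|\nabla_y u(xy_t)-\i A(\frac{x+y}{2})u(xy_t)|_1$.

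Next I would add and subtract $\i A(xy_t)u(xy_t)$ inside the last factor and apply the triangle inequality for $|\cdot|_1$; since both $\frac{x+y}{2}$ and $xy_t$ lie in $\mathrm{Conv}(\Omega)$ one has $|A(\frac{x+y}{2})-A(xy_t)|\le 2\|A\|_{L^\infty(\mathrm{Conv}\,\Omega)}$, so the estimate splits into a piece $\mathcal I$ with $g=|\nabla u-\i Au|_1$ and a piece $\mathcal{II}$ with $g=|u|_1$ (up to a bounded factor), both of the model form. For such a model integral I would split $\int_\Omega dx$ over $B(y,1)$ and its complement: on the near part I keep the factor $|x-y|\rho_m(x-y)$, while on the far part $|x-y|\le\mathrm{diam}\,\Omega$ can be absorbed into the constant. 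After the substitution $z=x-y$ (so that $xy_t=y+tz$), enlarging both the $z$- and $y$-domains to $\R^N$ (harmless, the integrands being nonnegative), Fubini, and the innermost substitution $w=y+tz$ — which turns $\int_{\R^N}g(y+tz)\,dy$ into $\int_W g$, independent of $t$ and $z$, thus removing the $t$-integral — one is left with $(\int_W g)$ times $\int_{B(0,1)}|z|\rho_m(z)\,dz+\int_{B(0,1)^c}\rho_m(z)\,dz=|\mathbb S^{N-1}|\big(\int_0^1 r^N\rho_m(r)\,dr+\int_1^\infty r^{N-1}\rho_m(r)\,dr\big)$. Summing the $\mathcal I$ and $\mathcal{II}$ contributions, absorbing $\|A\|_{L^\infty(\mathrm{Conv}\,\Omega)}$ into $C=C(N,\Omega,A)$, and invoking Lemma~\ref{Bv-lemmino} to identify $\int_W|\nabla u-\i Au|_1+\int_W|u|_1$ with $|Du|_A(W)+\|u\|_{L^1(W)}=\|u\|_{BV_A(W)}$ gives the assertion.

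The conceptual input here is light; the delicate points are all bookkeeping. I would be most careful that, once the domains have been enlarged to $\R^N$, the translation $w=y+tz$ genuinely decouples the $t$, $z$ and $y$ integrations — this rests squarely on $u\equiv0$ off $W$, which makes $\int_{\R^N}g(y+tz)\,dy$ finite and independent of $t$ and $z$ — and that the near/far splitting reassembles into exactly $\int_0^1 r^N\rho_m+\int_1^\infty r^{N-1}\rho_m$ rather than the coarser $\int_0^\infty r^{N-1}\rho_m$, since it is precisely this sharper form which, together with \eqref{lim-cond}--\eqref{prorho} and Remark~\ref{remconv}, makes this whole term negligible as $m\to\infty$.
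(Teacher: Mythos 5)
Your proposal is correct and follows essentially the same route as the paper's proof: the same phase estimate $|e^{\i(1-t)(x-y)\cdot A(\frac{x+y}{2})}-1|_1\le C|x-y|$ combined with Lemma~\ref{propnorm}$(i)$, the same add-and-subtract of $\i A(xy_t)u(xy_t)$ producing the pieces $\mathcal I$ and $\mathcal{II}$, the same near/far splitting and change of variables $z=x-y$, $w=y+tz$ exploiting $u\equiv 0$ off $W$, and the same final identification via Lemma~\ref{Bv-lemmino}. The points you flag as delicate (decoupling after translation, and keeping the sharper $\int_0^1 r^N\rho_m+\int_1^\infty r^{N-1}\rho_m$ form) are exactly the ones the paper's argument relies on, so nothing is missing.
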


\noindent
The following lemma is an adaptation to our case of \cite[Lemma 3]{davila} and \cite[Lemma 5.2]{Ponce}. 

\begin{lemma}\label{Ponciarello}
Let $A:\R^N\to\R^N$ be locally Lipschitz and $\Omega\subset\R^N$ 
be an open and bounded set. Then there exists a positive constant $C=C(\Omega,A)$ 
such that for all $r,m>0$, $W\Supset\Omega$ (i.e. $\Omega$ is compactly contained in $W$) and $u\in BV_A(\Omega)$, 
denoting by $\overline{u}\in BV_A(\R^N)$ an extension of $u$ to $\R^N$ 
such that $\overline{u}=0$ in $W^c$, the following inequality holds
\begin{align*}
\int_{\Omega}\int_{\Omega}&\frac{|u(x)-e^{\i(x-y)\cdot A(\frac{x+y}{2})}u(y)|_1}{|x-y|}\rho_m(x-y) dxdy\\
&\leq Q_{1,N}|D\overline{u}|_A(E_r')\int_{0}^r \rho_m(s)s^{N-1} ds+\frac{{\rm Lip}(A,E_r') \|\overline{u}\|_{L^1(W)}}{2}\int_{0}^r s^N\rho_m(s)ds\\
&+C\|\overline{u}\|_{BV_A(W')}\Big(\int_0^1 s^N\rho_m(s)ds+\int_1^\infty s^{N-1}\rho_m(s)ds\Big)\\
&+\frac{C\|\overline{u}\|_{L^1(W)}}{r}\int_{r}^{\infty} s^{N-1} \rho_m(s) ds,
\end{align*}
where $E_r:=\Omega+B(0,r)$, $W'$ (resp.\ $E_r'$) is any bounded open set with $W'\Supset W$ (resp.\ $E_r'\Supset E_r$).
\end{lemma}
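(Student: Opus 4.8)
The plan is to adapt the arguments of \cite[Lemma 3]{davila} and \cite[Lemma 5.2]{Ponce} to the magnetic framework: first prove the estimate for a smooth function vanishing outside $W$, then recover the statement by mollification.

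\textbf{Reduction to the smooth case.} Given $u\in BV_A(\Omega)$ and the extension $\overline u\in BV_A(\R^N)$ with $\overline u=0$ on $W^c$, I would mollify, $v_\varepsilon:=(\overline u)_\varepsilon$ as in \eqref{mollif}; for small $\varepsilon$, $v_\varepsilon\in C^\infty(\R^N,\C)$ vanishes outside the open $\varepsilon$-neighbourhood $W_\varepsilon$ of $W$, and $\Omega\Subset W_\varepsilon\Subset W'$. Granting the inequality for each $v_\varepsilon$ in place of $\overline u$ --- with $E_r'$ and $W'$ replaced by auxiliary bounded open sets $E_r''\supset\overline{E_r'}$, $W''\supset\overline{W'}$ --- I would let $\varepsilon\to0$. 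On the left, $v_\varepsilon\to u$ in $L^1(\Omega)$, so along a subsequence the integrands converge a.e.\ and Fatou's lemma bounds the left-hand side of the claim by $\liminf_\varepsilon$ of the analogous integral for $v_\varepsilon$. On the right, Lemma~\ref{conv} gives $\limsup_\varepsilon|Dv_\varepsilon|_A(E_r'')\le|D\overline u|_A(E_r'')$ and, together with $\|v_\varepsilon\|_{L^1}\le\|\overline u\|_{L^1(W)}$, also $\limsup_\varepsilon\|v_\varepsilon\|_{BV_A(W_\varepsilon)}\le\|\overline u\|_{BV_A(W'')}$, while the integrals $\int_0^r s^N\rho_m$, $\int_1^\infty s^{N-1}\rho_m$, $\int_r^\infty s^{N-1}\rho_m$ do not involve $\varepsilon$. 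Since $E_r'$, $W'$ in the statement are \emph{arbitrary} open sets compactly containing $E_r$, $W$, passing to the limit and then renaming $E_r''$ as $E_r'$, $W''$ as $W'$ yields the claim. Hence it suffices to treat $v\in C^\infty(\R^N,\C)$ vanishing on $W^c$.

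\textbf{The smooth case.} Split $\Omega\times\Omega$ at $|x-y|=r$. On $\{|x-y|\ge r\}$, using $|v(x)-e^{\i\theta}v(y)|_1\le\sqrt2(|v(x)|_1+|v(y)|_1)$ for $\theta\in\R$ and $\int_{\{|h|\ge r\}}\rho_m(|h|)\,dh=|{\mathbb S}^{N-1}|\int_r^\infty s^{N-1}\rho_m(s)\,ds$, one gets a contribution $\le\frac{C\|\overline u\|_{L^1(W)}}{r}\int_r^\infty s^{N-1}\rho_m(s)\,ds$. On $\{|x-y|<r\}$, with $xy_t:=tx+(1-t)y$ and $\varphi(t):=e^{\i(1-t)(x-y)\cdot A(\frac{x+y}{2})}v(xy_t)$ one has $v(x)-e^{\i(x-y)\cdot A(\frac{x+y}{2})}v(y)=\int_0^1\varphi'(t)\,dt$ with $\varphi'(t)=e^{\i(1-t)(x-y)\cdot A(\frac{x+y}{2})}(x-y)\cdot\big(\nabla v(xy_t)-\i A(\tfrac{x+y}{2})v(xy_t)\big)$; writing $e^{\i(1-t)(\cdot)}=1+(e^{\i(1-t)(\cdot)}-1)$ and $\nabla v(xy_t)-\i A(\tfrac{x+y}{2})v(xy_t)=\big(\nabla v(xy_t)-\i A(xy_t)v(xy_t)\big)+\i\big(A(xy_t)-A(\tfrac{x+y}{2})\big)v(xy_t)$ decomposes $\tfrac{v(x)-e^{\i(x-y)\cdot A(\frac{x+y}{2})}v(y)}{|x-y|}$ into (a) a main term $\int_0^1\tfrac{x-y}{|x-y|}\cdot(\nabla v(xy_t)-\i A(xy_t)v(xy_t))\,dt$, (b) an exponential remainder, and (c) a Lipschitz remainder, controlled in $|\cdot|_1$ by $|x-y|\int_0^1|t-\tfrac12|\,|v(xy_t)|_1\,dt$ (using $|\tfrac{x+y}{2}-xy_t|=|t-\tfrac12||x-y|$, the bound $|\tfrac{x-y}{|x-y|}\cdot w|_1\le|w|_1$ from Lemma~\ref{propnorm}(i), and that $xy_t,\tfrac{x+y}{2}\in\Omega+B(0,r)=E_r\subset E_r'$ when $|x-y|<r$). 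Integrating against $\rho_m(x-y)$ over $\{|x-y|<r\}\cap(\Omega\times\Omega)$ and using the triangle inequality $\big|\int_0^1(\cdot)\,dt\big|_1\le\int_0^1|\cdot|_1\,dt$: term (b) is majorized by the left-hand side of Lemma~\ref{stima} (applied to $v$ with $W_\varepsilon$ as enclosing set), hence $\le C\|v\|_{BV_A(W_\varepsilon)}\big(\int_0^1 s^N\rho_m(s)\,ds+\int_1^\infty s^{N-1}\rho_m(s)\,ds\big)$; term (c), after the measure-preserving change of variables $h=x-y$, $z=y+th$ (whose image lies in $E_r$ once $|h|<r$) and the bound $|t-\tfrac12|\le\tfrac12$, produces the second term on the right-hand side; and term (a), after the same change of variables, equals $\int_{E_r}\Big(\int_{\{|h|<r\}}\big|\tfrac{h}{|h|}\cdot\big(\nabla v(z)-\i A(z)v(z)\big)\big|_1\rho_m(|h|)\,dh\Big)dz$, which via polar coordinates and the identity $\int_{{\mathbb S}^{N-1}}|\omega\cdot w|_1\,d\mathcal{H}^{N-1}(\omega)=Q_{1,N}|w|_1$ (valid for $w\in\C^N$, being the $p=1$ case of the computation in the proof of Lemma~\ref{rem}, applied to $\Re w$ and $\Im w$ separately) equals $Q_{1,N}\big(\int_0^r s^{N-1}\rho_m(s)\,ds\big)\int_{E_r}|\nabla v-\i Av|_1\,dz=Q_{1,N}|Dv|_A(E_r)\int_0^r s^{N-1}\rho_m(s)\,ds\le Q_{1,N}|Dv|_A(E_r')\int_0^r s^{N-1}\rho_m(s)\,ds$ by Lemma~\ref{Bv-lemmino}. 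Summing the four bounds yields the inequality for $v$.

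\textbf{Main difficulty.} The heart of the argument is the geometric point that, \emph{once the integration is restricted to} $\{|x-y|<r\}$, every interior point $xy_t$ of the segment $[x,y]$ lies in $E_r=\Omega+B(0,r)$; this is precisely what makes $|Dv|_A(E_r)$ (rather than the $A$-variation over the convex hull of $\Omega$) appear in term (a), and it is why the splitting must be performed at scale $r$. The remaining effort is bookkeeping: threading the enlargements $E_r\subset E_r'\subset E_r''$ and $W\subset W'\subset W''$ through the mollification limit via Lemma~\ref{conv}, so that the final estimate is stated with the arbitrary sets $E_r'$, $W'$. By contrast, the exponential remainder costs nothing new, being exactly the quantity bounded in Lemma~\ref{stima}, and the angular integral is the $p=1$ instance of Lemma~\ref{rem}.
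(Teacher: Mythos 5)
Your proposal is correct and follows essentially the same route as the paper's proof: mollify the extension, split at scale $r$, write the difference via $\psi(t)=e^{\i(1-t)(x-y)\cdot A(\frac{x+y}{2})}\overline{u}_\eps(xy_t)$ and the fundamental theorem of calculus, isolate the main term (change of variables into $E_r$ plus the angular identity giving $Q_{1,N}$), the exponential remainder (Lemma~\ref{stima}) and the Lipschitz-of-$A$ remainder, then control the mollified quantities via Lemma~\ref{conv} and pass to the limit $\eps\to 0$ with Fatou/lower semicontinuity. The only difference is presentational (you state a smooth-case estimate first and then mollify, while the paper works with $\overline{u}_\eps$ throughout), and your explicit splitting of the integrand into the three pieces is, if anything, slightly cleaner than the paper's $\mathcal{I}/\mathcal{II}$ bookkeeping.
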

\begin{proof} 
For any $\varepsilon\in (0,r),$ let $\overline{u}_\eps$ be as in 
formula \eqref{mollif} for $\overline{u}:\R^N\to\C$. 
By a change of variables, Fubini's Theorem and Lemma \ref{propnorm}, we have
\begin{align*}
\int_{\Omega}&\int_{\Omega}\frac{|\overline{u}_{\varepsilon}(x)-e^{\i(x-y)\cdot A(\frac{x+y}{2})}\overline{u}_{\varepsilon}(y)|_1}{|x-y|}\rho_m(x-y) dxdy\\
\nonumber
&\leq \int_{\Omega}\Big(\int_{\Omega\cap B(y,r)}\frac{|\overline{u}_{\varepsilon}(x)-e^{\i(x-y)\cdot A(\frac{x+y}{2})}\overline{u}_{\varepsilon}(y)|_1}{|x-y|}\rho_m(x-y) dx\Big)dy+\frac{C\|\overline{u}\|_{L^1(W)}}{r}\int_{B(0,r)^c} \rho_m(h) dh,
\end{align*}
where $C=C(N)>0$.
Let us now define
$\psi(t):=e^{\i(1-t)(x-y)\cdot A\left(\frac{x+y}{2}\right)}\overline{u}_{\varepsilon}(tx+(1-t)y), $ $t\in [0,1]$.
Then 
$$
\overline{u}_{\varepsilon}(x)-e^{\i(x-y)\cdot A\left(\frac{x+y}{2}\right)}\overline{u}_{\varepsilon}(y)=\psi(1)-\psi(0)=\int_0^1 \psi'(t) dt,
$$
and since
\[
\psi'(t)=e^{\i(1-t)(x-y)\cdot A\left(\frac{x+y}{2}\right)}(x-y)\cdot\Big( \nabla_y\overline{u}_{\varepsilon}(tx+(1-t)y)-\i A\Big(\frac{x+y}{2}\Big)\overline{u}_{\varepsilon}(tx+(1-t)y)\Big),
\]
we have
\begin{align*}
\int_{\Omega}\Big(\int_{\Omega\cap B(y,r)}\frac{|\overline{u}_{\varepsilon}(x)-e^{\i(x-y)\cdot A(\frac{x+y}{2})}\overline{u}_{\varepsilon}(y)|_1}{|x-y|}\rho_m(x-y) dx\Big)dy\leq \mathcal{I}+\mathcal{II},
\end{align*}
where we have set
\begin{align*}
&\mathcal{I}:= \int_{\Omega}\Big(\int_{\Omega\cap B(y,r)}\int_0^1 \left|\frac{x-y}{|x-y|}\cdot\left( \nabla_y \overline{u}_{\varepsilon}(xy_t)-\i A\left(\frac{x+y}{2}\right) \overline{u}_{\varepsilon}(xy_t)\right)\right|_1 \rho_m(x-y) dtdx\Big)dy \\
&\mathcal{II}:= \Big|\int_{\Omega}\int_{\Omega\cap B(y,r)}\int_0^1 \left|e^{\i(1-t)(x-y)\cdot A \left(\frac{x+y}{2}\right)}\frac{x-y}{|x-y|}\cdot\left( \nabla_y \overline{u}_{\varepsilon}(xy_t)-\i A\left(\frac{x+y}{2}\right) \overline{u}_{\varepsilon}(xy_t)\right)\right|_1 \rho_m(x-y)dtdxdy  \\
 &-\int_{\Omega}\int_{\Omega\cap B(y,r)}\int_0^1 \left|\frac{x-y}{|x-y|}\cdot\left( \nabla_y \overline{u}_{\varepsilon}(xy_t)-\i A\left(\frac{x+y}{2}\right) \overline{u}_{\varepsilon}(xy_t)\right)\right|_1\rho_m(x-y) dtdxdy\Big|.
\end{align*}
Let $W_\eps:=\{x\in\R^N: d(x,W)<\eps\}$, we 
have $\overline{u}_\eps=0$ on $W^c_\eps$ and by Lemmas \ref{stima} and \ref{conv}
\begin{equation}\label{stimaII}
\begin{aligned}
 \mathcal{II} &\leq C \|\overline{u}_{\varepsilon}\|_{BV_A(W_\eps)} \left(\int_0^1 r^N\rho_m(r)dr+\int_1^\infty r^{N-1}\rho_m(r)dr\right)\\
&\leq C \left(\|\overline{u}\|_{BV_A(W')}+\varepsilon \mathrm{Lip}(A,W')\|\overline{u}\|_{L^1(W')}\right)\left(\int_0^1 r^N\rho_m(r)dr+\int_1^\infty r^{N-1}\rho_m(r)dr\right),
\end{aligned}\end{equation}
for an arbitrary open set $W'\Supset W$ and for some positive constant $C=C(N,\Omega, A)$. On the other hand, we have
\begin{align*}
&\mathcal{I} \leq 
\int_{B(0,r)}\int_0^1 \int_{\Omega}\left|\left( \nabla_y\overline{u}_{\varepsilon}(y+th)-\i A\left(y+\frac{h}{2}\right)\overline{u}_{\varepsilon}(y+th)\right)\cdot \frac{h}{|h|}\right|_1 \rho_m(h) dydtdh\\
\nonumber
&\leq \int_{B(0,r)}\int_0^1 \int_{\Omega}\left|\left( \nabla_y\overline{u}_{\varepsilon}(y+th)-\i A\left(y+th\right)\overline{u}_{\varepsilon}(y+th)\right)\cdot \frac{h}{|h|}\right|_1 \rho_m(h) dydtdh\\
\nonumber
&+\int_{B(0,r)}\int_0^1 \int_{\Omega}\left|\left( \i A\left(y+\frac{h}{2}\right)\overline{u}_{\varepsilon}(y+th)-\i A\left(y+th\right)\overline{u}_{\varepsilon}(y+th)\right)\cdot \frac{h}{|h|}\right|_1 \rho_m(h) dydtdh\\
\nonumber
&\leq \int_{B(0,r)} \int_{E_r}\left|\left( \nabla_y\overline{u}_{\varepsilon}(z)-\i A\left(z\right)\overline{u}_{\varepsilon}(z)\right)\cdot \frac{h}{|h|}\right|_1 \rho_m(h) dzdh\\
\nonumber
&+\int_{B(0,r)}\int_0^1 \int_{\Omega}\left|\left( A\left(y+\frac{h}{2}\right)-A\left(y+th\right)\right)\cdot \frac{h}{|h|}\right|_1\left|\overline{u}_{\varepsilon}(y+th)\right|_1 \rho_m(h) dydtdh\\
\nonumber
&\leq \int_{0}^r \int_{E_r}\left(\int_{S^{N-1}}\left|\left( \nabla_y\overline{u}_{\varepsilon}(z)-\i A\left(z\right)\overline{u}_{\varepsilon}(z)\right)\cdot w\right|_1   d\mathcal{H}^{N-1}(w)\right)s^{N-1}\rho_m(s)dzds\\
\nonumber
&+\int_{B(0,r)}\int_0^1 \int_{\Omega}\left|\left( A\left(y+\frac{h}{2}\right)-A\left(y+th\right)\right)\cdot \frac{h}{|h|}\right|_1\left|\overline{u}_{\varepsilon}(y+th)\right|_1 \rho_m(h) dydtdh.
\end{align*}
Taking into account that (see the final lines of the proof of Lemma~\ref{rem})
$$
\int_{S^{N-1}}\left|\xi\cdot w\right|_1 d\mathcal{H}^{N-1}(w)=Q_{1,N}|\xi|_1,
\quad\text{for any $\xi\in \C^N$},
$$
we obtain that 
\begin{align*}
& \mathcal{I} \leq Q_{1,N}\int_{0}^r \int_{E_r}\left|\nabla_y\overline{u}_{\varepsilon}(z)-\i A\left(z\right)\overline{u}_{\varepsilon}(z)\right|_1  s^{N-1}\rho_m(s) dsdz\\
\nonumber
&+\int_{B(0,r)}\int_0^1 \int_{\Omega}\left|\left( A\left(y+\frac{h}{2}\right)-A\left(y+th\right)\right)\cdot \frac{h}{|h|}\right|_1\left|\overline{u}_{\varepsilon}(y+th)\right|_1 \rho_m(h) dydtdh.
\end{align*}
Whence, taking into account Lemma~\ref{NormaSmooth} and Lemma~\ref{conv}, we finally get 
\begin{align}\label{stimaI}
& \mathcal{I} \leq Q_{1,N}\left(\int_{E_r}\left|\nabla_y\overline{u}_{\varepsilon}(z)-\i A\left(z\right)\overline{u}_{\varepsilon}(z)\right|_1 dz\right)\, \int_{0}^r \rho_m(s)s^{N-1} ds\\
\nonumber
&+\int_{B(0,r)}\int_0^1 \int_{\Omega}\left|\left( A\left(y+\frac{h}{2}\right)-A\left(y+th\right)\right)\cdot \frac{h}{|h|}\right|_1\left|\overline{u}_{\varepsilon}(y+th)\right|_1 \rho_m(h) dydtdh\\
\nonumber
&\leq Q_{1,N}\left(|D\overline{u}|_A(E_r')\int_{B(0,r)} \rho_m(h) dh +\varepsilon {\rm Lip}(A,E_r') \|\overline{u}\|_{L^1(E_r')}\right)\\
&+\frac{{\rm Lip}(A, E_r') \|\overline{u}\|_{L^1(W)}}{2}\int_{B(0,r)}|h|\rho_m(h)dh, \notag
\end{align}
where in the last inequality we used Lemma \ref{conv}. 
Putting together \eqref{stimaI} and \eqref{stimaII} we get
\begin{equation*}
\begin{aligned}
&\int_{\Omega}\int_{\Omega}\frac{|\overline{u}_{\varepsilon}(x)-e^{\i(x-y)A(\frac{x+y}{2})}\overline{u}_{\varepsilon}(y)|_1}{|x-y|}\rho_m(x-y) dxdy\\
&\leq Q_{1,N}\Big(|D\overline{u}|_A(E_r')\int_{0}^r \rho_m(s)s^{N-1} ds+\varepsilon {\rm Lip}(A,E_r') \|\overline{u}\|_{L^1(E_r')}\Big)
+\frac{{\rm Lip}(A,E_r') \|\overline{u}\|_{L^1(W)}}{2}\int_{0}^r s^N\rho_m(s)ds\\
&+C\left(\|\overline{u}\|_{BV_A(W')}+\eps{\rm Lip}(A,W')\|\overline{u}\|_{L^1(W')}\right)\left(\int_0^1 s^N\rho_m(s)ds+\int_{1}^{\infty} s^{N-1}\rho_m(s)ds\right) \\
&+\frac{C\|\overline{u}\|_{L^1(W)}}{r}\int_{r}^{\infty} s^{N-1} \rho_m(s) ds.
\end{aligned}\end{equation*}
The conclusion follows letting $\varepsilon\to 0^+$.
\end{proof}

\noindent
$\bullet$ {\bf Proof of Theorem \ref{Main} concluded.}
	Fix $r>0$, $W\Supset\Omega$ and let $\overline{u}=Eu\in BV_A(\R^N)$ 
	be an extension of $u$ such that $\overline{u}=0$ in $W^c$ and $|D\overline{u}|_A(\partial\Omega)=0$,
	according to Lemma~\ref{ExtDom-new}. Using Lemma \ref{approx} and 
	Lemma \ref{Ponciarello} for every $0<\varepsilon<r$ we have
\begin{align*}
&\int_{\Omega_r\cap B(0,1/r)}\int_{\Omega_r\cap B(0,1/r)}\frac{|u_{\varepsilon}(x)-e^{\i(x-y)\cdot A(\frac{x+y}{2})}u_{\varepsilon}(y)|_1}{|x-y|}\rho_m(x-y) dxdy\\
\nonumber
&\leq \int_{\Omega}\int_{\Omega}\frac{|u(x)-e^{\i(x-y)\cdot A(\frac{x+y}{2})}u(y)|_1}{|x-y|}\rho_m(x-y) dxdy\\
\nonumber
&+\frac{1}{\varepsilon^N}\int_{B(0,\varepsilon)}\eta\left(\frac{z}{\varepsilon}\right)\int_{\Omega}\int_{\Omega} \frac{\left|e^{\i(x-y)\cdot A(\frac{x+y}{2}+z)}u(y)-e^{\i(x-y)\cdot A(\frac{x+y}{2})}u(y)\right|_1}{|x-y|} \rho_m(x-y) dxdydz\\
\nonumber
& \leq Q_{1,N}|D\overline{u}|_A(E_r')\int_{0}^r \rho_m(s)s^{N-1}ds +\frac{{\rm Lip}(A,E_r') \|\overline{u}\|_{L^1(W)}}{2}\int_{0}^r s^N\rho_m(s)ds\\
&+C\|\overline{u}\|_{BV_A(W')}\Big(\int_0^1 s^N\rho_m(s)ds+\int_1^\infty s^{N-1}\rho_m(s)ds\Big) 
+\frac{C\|\overline{u}\|_{L^1(W)}}{r}\int_{r}^{\infty} s^{N-1} \rho_m(s) ds \notag \\
&+\frac{1}{\varepsilon^N}\int_{B(0,\varepsilon)}\eta\left(\frac{z}{\varepsilon}\right)\int_{\Omega}\int_{\Omega} \frac{\left|e^{\i(x-y)\cdot A(\frac{x+y}{2}+z)}u(y)-e^{\i(x-y)\cdot A(\frac{x+y}{2})}u(y)\right|_1}{|x-y|} \rho_m(x-y) dxdydz. \notag 
\end{align*}
Letting $m\to\infty$, using \eqref{Pociarellosmooth}, \eqref{lim-cond} and \eqref{prorho} we get
\begin{align*}
&Q_{1,N}|Du_{\varepsilon}|_A(\Omega_r\cap B(0,1/r))\leq \lim_{m\to\infty}\int_{\Omega}\int_{\Omega}\frac{|u(x)-e^{\i(x-y)\cdot A(\frac{x+y}{2})}u(y)|_1}{|x-y|}\rho_m(x-y) dxdy\\
\nonumber
&+\lim_{m\to \infty} \frac{1}{\varepsilon^N}\int_{B(0,\varepsilon)}\eta\left(\frac{z}{\varepsilon}\right)\int_{\Omega}\int_{\Omega} \frac{\left|e^{\i(x-y)\cdot A(\frac{x+y}{2}+z)}u(y)-e^{\i(x-y)\cdot A(\frac{x+y}{2})}u(y)\right|_1}{|x-y|} \rho_m(x-y) dxdydz\\
\nonumber
&\leq Q_{1,N}|D\overline{u}|_A(E_r')\\
&\quad +\lim_{m\to \infty} \frac{1}{\varepsilon^N}\int_{B(0,\varepsilon)}\eta\left(\frac{z}{\varepsilon}\right)\int_{\Omega}\int_{\Omega} \frac{\left|e^{\i(x-y)\cdot A(\frac{x+y}{2}+z)}u(y)-e^{\i(x-y)\cdot A(\frac{x+y}{2})}u(y)\right|_1}{|x-y|} \rho_m(x-y) dxdydz.
\end{align*}
Letting $\varepsilon\to 0^+$, using the
lower semi-continuity of the total variation and Lemma \ref{approx} we have
\begin{align*}
Q_{1,N}|Du|_A(\Omega_r\cap B(0,1/r))&\leq \lim_{m\to\infty}\int_{\Omega}\int_{\Omega}\frac{|u(x)-e^{\i(x-y)\cdot A(\frac{x+y}{2})}u(y)|_1}{|x-y|}\rho_m(x-y) dxdy  \\
&\leq Q_{1,N}|D\overline{u}|_A(E_r'),
\end{align*}
the assertion follows letting $r\searrow 0$ and observing that
\begin{align*}
\lim_{r\to 0^+}|Du|_A(\Omega_r\cap B(0,1/r))=\lim_{r\to 0^+}|D\overline{u}|_A(E_r')=|Du|_A(\Omega).
\end{align*}
Indeed, since $|Du|_{A}(\cdot)$ is a Radon measure,
then by inner regularity
\[
\lim_{r\to 0^+}|Du|_A(\Omega_r\cap B(0,1/r))=|Du|_A(\Omega),
\]
and by outer regularity
\[
\lim_{r\to 0^+}|D\overline{u}|_A(E_r')=|Du|_A(\overline{\Omega})=|Du|_A(\Omega).
\]
This concludes the proof.
\qed


\end{document}